\newcommand{\Cliq}{\mathsf{Cliq}}
\newcommand{\ilim}{\varprojlim}
\renewcommand{\to}{\longrightarrow}
\newcommand{\R}{{\mathrel{\mathscr R}}} 
\newcommand{\pv}[1]{\mathbf {#1}}
\newcommand{\inv}{^{-1}}
\newcommand{\p}{\varphi}
\newcommand{\ov}[1]{\ensuremath{\overline {#1}}}
\newcommand{\til}[1]{\ensuremath{\widetilde {#1}}}
\newcommand{\wh}{\widehat}
\newcommand{\module}[1]{#1\text{-}\mathbf{mod}}
\newcommand{\Hom}{\mathop{\mathrm{Hom}}\nolimits}
\newcommand{\Ext}{\mathop{\mathrm{Ext}}\nolimits}
\newcommand{\nup}[2][B]{#1_{\not\geq #2}}
\newcommand{\sgn}{\mathop{\mathbf s}\nolimits}
\newcommand{\reduce}{\operatorname{reduce}}
\def\AAA{\mathcal A} 
\def\LLL{\mathcal L} 
\def\FFF{\mathcal F} 
\def\SSS{\mathcal S} 
\def\OM{\mathscr X} 
\def\BQ{B^\prec_Q}
\newcommand\quiverpathgeneric[3]{\left(#1\mid #2 \mid#3\right)}
\newcommand\qpathwithverts[3][\alpha]{\quiverpathgeneric{#2_0}{#1_1,\ldots,#1_#3}{#2_#3}}
\def\source{s}
\def\target{t}
\newcommand\quiverpath[2]{\left(#1_1 \cdots #1_#2\right)}
\newcommand\quiverpathsuffix[3]{\left(#1_#2 \cdots #1_#3\right)}
\newtheorem{Thm}{Theorem}[section]
\newtheorem{Prop}[Thm]{Proposition}
\newtheorem{Lemma}[Thm]{Lemma}
{\theoremstyle{definition}
\newtheorem{Def}[Thm]{Definition}}
{\theoremstyle{remark}
\newtheorem{Rmk}[Thm]{Remark}}
\newtheorem{Cor}[Thm]{Corollary}
{\theoremstyle{remark}
\newtheorem{Example}[Thm]{Example}}
\theoremstyle{remark}
\theoremstyle{remark}
\theoremstyle{remark}
\numberwithin{equation}{section}
\title[Combinatorial Topology and the Global Dimension of Algebras]{Combinatorial Topology and the Global Dimension of Algebras Arising in Combinatorics}
\author{Stuart Margolis}
\address[S.~Margolis]{%
    Department of Mathematics\\
    Bar Ilan University\\
    52900 Ramat Gan\\
    Israel}
\email{margolis@math.biu.ac.il}
\author{Franco Saliola}
\address[F.~~Saliola]{
D{\'e}partement de Math{\'e}matiques -- LaCIM\\
Universit{\'e} du Qu{\'e}bec {\`a} Montr{\'e}al\\
C.P. 8888, Succursale Centre-Ville\\
Montr{\'e}al, Qu{\'e}bec  H3C 3P8\\
Canada
}
\email{saliola.franco@uqam.ca}
\author{Benjamin Steinberg}
\address[B.~Steinberg]{%
    Department of Mathematics\\
    City College of New York\\
    Convent Avenue at 138th Street\\
    New York, New York 10031\\
    USA}
\email{bsteinberg@ccny.cuny.edu}
\thanks{The first author wishes to warmly thank the Center for Algorithmic and Interactive Scientific Software, CCNY, CUNY for inviting him to be a Visiting Professor during part of  the
preparation of this paper.  The second and third authors were supported in part by NSERC. Some of this work was done while the third author was at the School of Mathematics and Statistics of Carleton University}
\date{\today}
\keywords{global dimension, hereditary algebra, cohomology, classifying
space, left regular band, hyperplane arrangements, order complex, Leray number, chordal graph}
\subjclass[2000]{05E10,16E10,16G10,52C35,05E45}
\begin{document}

\begin{abstract}

In a highly influential paper, Bidigare, Hanlon and Rockmore showed that a
number of popular Markov chains are random walks on the faces of a hyperplane
arrangement. Their analysis of these Markov chains took advantage of the monoid
structure on the set of faces. This theory was later extended by Brown to a
larger class of monoids called left regular bands. In both cases, the
representation theory of these monoids played a prominent role. In particular,
it was used to compute the spectrum of the transition operators of the Markov
chains and to prove diagonalizability of the transition operators.

In this paper, we establish a close connection between algebraic and
combinatorial invariants of a left regular band: we show that certain
homological invariants of the algebra of a left regular band coincide with the
cohomology of order complexes of posets naturally associated to the left
regular band. For instance, we show that the global dimension of these algebras
is bounded above by the Leray number of the associated order complex.
Conversely, we associate to every flag complex a left regular band whose
algebra has global dimension precisely the Leray number of the flag complex.
\end{abstract}

\maketitle

\section{Introduction}

In a highly influential paper~\cite{BHR}, Bidigare, Hanlon and Rockmore showed
that a number of popular Markov chains, including the Tsetlin library and the
riffle shuffle, are random walks on the faces of a hyperplane arrangement (the
braid arrangement for these two examples).
More importantly, they showed that the representation theory of the monoid of
faces, where the monoid structure on the faces of a central hyperplane
arrangement is given by the Tits projections~\cite{Titsappendix}, could be used
to analyze these Markov chains and, in particular, to compute the spectrum of
their transition operators.

Using the topology of arrangements, Brown and Diaconis~\cite{DiaconisBrown1} found resolutions of the simple modules for the face monoid that were later shown by the second author to be the minimal projective resolutions~\cite{Saliolahyperplane}. Brown and Diaconis used these resolutions to prove diagonalizability of the transition operator. Bounds on rates of convergence to stationarity were obtained in~\cite{BHR,DiaconisBrown1}. They observed, moreover, that one can replace the faces of a hyperplane arrangement by the covectors of an oriented matroid~\cite{OrientedMatroids1993} and the theory carries through. We remark that the original version of Brown's book on buildings~\cite{Brown:book1} makes no mention of the face monoid of a hyperplane arrangement, whereas it plays a prominent role in the new edition~\cite{Brown:book2}. Hyperplane face monoids also have a salient position in the work of Aguiar and Mahajan on combinatorial Hopf algebras~\cite{Aguiar,aguiarspecies}.

The representation theory of hyperplane face monoids is closely connected to Solomon's descent algebra~\cite{SolomonDescent}.  Bidigare showed in his thesis~\cite{Bidigarethesis} (see also~\cite{Brown2}) that if $W$ is a finite Coxeter group and $\AAA_W$ is the associated
reflection arrangement, then the descent algebra of $W$ is the algebra of invariants for the action of $W$ on the algebra of the face monoid of $\AAA_W$. This, together with his study of the representation theory of hyperplane face monoids~\cite{Saliolahyperplane}, allowed the second author to compute the quiver of the descent algebra in types A and B~\cite{SaliolaDescent} (see also~\cite{schocker}).

The face monoid of a hyperplane arrangement satisfies the identities $x^2 = x$ and $xyx = xy$. A semigroup satisfying these identities is known in the literature as a left regular band. Brown developed~\cite{Brown1,Brown2} a theory of random walks on finite left regular bands. He gave numerous examples that do not come from hyperplane arrangements, as well as examples of hyperplane walks that could more easily be modeled on simpler left regular bands. For example, Brown considered random walks on bases of matroids. Brown used the representation theory of left regular bands to extend the spectral results of Bidigare, Hanlon and Rockmore~\cite{BHR} and gave an algebraic proof of the diagonalizability of random walks on left regular bands.

Brown's theory has since been used and further developed by numerous authors.
Diaconis highlighted hyperplane face monoid and left regular band walks in his
1998 ICM lecture~\cite{DiaconisICM}.
Bj\"orner used it to develop the theory of random walks on complex hyperplane
arrangements and interval greedoids~\cite{bjorner1,bjorner2}.
Athanasiadis and Diaconis revisited random walks on hyperplane face monoids and left
regular bands in~\cite{DiaconisAthan}.
Chung and Graham considered further left regular band random walks associated
to graphs in~\cite{GrahamLRB}.
Saliola and Thomas proposed a definition of oriented interval greedoids by
generalizing the left regular bands associated to oriented matroids and
antimatroids \cite{SaliolaThomas}.
See also the recent work of Reiner, Saliola and Welker on symmetrized random
walks on hyperplane face monoids~\cite{Randomtorandom}.
Left regular bands have also appeared in Lawvere's work in topos theory
\cite{graphic1,moregraphic}.

Left regular bands have directed quasi-hereditary algebras and hence have acyclic quivers and finite global dimension. The second author computed in~\cite{Saliolahyperplane} the $\Ext$-spaces between simple modules in the case of the algebras of face monoids of hyperplane arrangements using the resolutions of Brown and Diaconis coming from the topology of hyperplane arrangements. Consequently, he computed the global dimension of these algebras. In~\cite{Saliola} the second author computed the projective indecomposable modules for arbitrary left regular band algebras and also the quiver. In this setting, one didn't seem to have any topology available to compute the minimal resolutions and so he was unable to compute $\Ext$-spaces between simple modules.

The paper~\cite{Saliola} also contained an intriguing unpublished result of Ken Brown stating that the algebra of a free left regular band is hereditary. The proof is via a computation of the quiver and amounts to proving that the dimension of the path algebra is the cardinality of the free left regular band. The right Cayley graph of a free left regular band is a tree (after removing loop edges) and this leads one to suspect that there is a topological explanation for the fact that its algebra has global dimension one. This paper arose in part to give a conceptual explanation to this result of Brown.

There seem to be only a handful of results in the finite dimensional algebra literature that use topological techniques to compute homological invariants of algebras. The primary examples seem to be in the setting of incidence algebras where the order complex of the poset plays a key role~\cite{Cibils,Gerstenhaber,Igusa}. A more general setting is considered in~\cite{Bustamente}. In this paper we use topological techniques to compute the $\Ext$-spaces between simple modules of the algebra of a left regular band. In particular, we use order complexes of posets and classifying spaces of small categories (in the sense of Segal~\cite{GSegal}) to achieve this. A fundamental role is played by the celebrated Quillen's Theorem A, which gives a sufficient condition for a functor between categories to induce a homotopy equivalence of classifying spaces. Somewhat surprisingly to us, a combinatorial invariant of simplicial complexes, the Leray number~\cite{Kalai1,Kalai2}, plays an important part in this paper. The Leray number is tied to the Castelnuovo-Mumford regularity of Stanley-Reisner rings. In particular, the paper gives a new, non-commutative interpretation of the regularity of the Stanley-Reisner ring of a flag complex.

Let us give a more technical overview of the paper. We will assume that all left regular bands are finite. Our goal is to study the
algebra $\Bbbk B$ of a left regular band $B$ over a commutative ring with
unit $\Bbbk$. The reader should feel free to assume that $\Bbbk$ is a field if he/she
likes.  The principal goal is to compute $\Ext^n_{\Bbbk B}(\Bbbk_X,\Bbbk_Y)$ for all $n\geq 0$,
where $\Bbbk_X$ and $\Bbbk_Y$ are certain $\Bbbk B$-modules; in the case that $\Bbbk$ is a
field, these are the simple $\Bbbk B$-modules. Our main result identifies these
$\Ext$-spaces with the cohomology of order complexes of posets naturally
associated to the left regular band. This establishes a close connection
between these algebraic invariants and the combinatorics of these order
complexes. For instance, we show that the global dimension of $\Bbbk B$ is bounded
above by the Leray number~\cite{Kalai1,Kalai2} of the associated order complex.  Conversely, we associate to every flag complex $K$ a left regular band whose algebra has global dimension precisely the Leray number of $K$.

The article is outlined as follows.
In Section \ref{s:LRBs} we recall definitions and properties of left regular
bands and related constructions. Section \ref{s:LRBExamples} surveys several
examples of left regular bands. We illustrate how some of the left regular
bands that have appeared in the literature are special cases of classical
semigroup-theoretic constructions. We also introduce some new examples:
\emph{free partially commutative left regular bands}, which are analogues
of trace monoids and right-angled Artin groups~\cite{BestvinaBrady};
\emph{geometric left regular bands}, which include nearly all the left regular bands
that have appeared in the algebraic combinatorics literature;
and the \emph{left regular band of an acyclic quiver} whose semigroup algebra
is the path algebra of the quiver.

Since the proof of our main theorem is rather involved, we decided to discuss
its applications before presenting its proof. So Section \ref{s:applications}
is devoted to applications of the main theorem. We begin with a new description
of the quiver of a left regular band algebra. We show that the algebra's global dimension
is bounded above by the Leray number of the order complex of the left regular band. This
leads to a characterization of the free partially commutative left regular bands
with a hereditary algebra as those constructed from chordal graphs.

The proof of the main theorem is split across two sections. Section
\ref{s:mainresult} reviews the second author's construction of a complete set of orthogonal idempotents, which
are then used to identify the
Sch\"utzenberger representations of the left regular band as projective modules (indecomposable over a field). We construct
projective resolutions of the modules $\Bbbk_X$, which recasts the computation of
$\Ext^n_{\Bbbk B}(\Bbbk_X,\Bbbk_Y)$ into one involving monoid cohomology and classifying
spaces.

Section \ref{s:MonoidCohomology} contains the crux of the proof. Our main tools
are classifying spaces and the cohomology of monoids and small categories.
Although we are mostly interested in monoid cohomology, which is a natural
generalization of group cohomology, we will also need to work with categories
that are not monoids; namely, posets and the semidirect product of a monoid
with a set (also known as the Grothendieck construction, or category of elements).


\section{Left Regular Bands}
\label{s:LRBs}

\subsection{Bands and left regular bands}\label{ss:LRBs1}
A \emph{band} is a monoid $B$ all of whose elements are idempotents.
A particularly important class of bands arising in probability theory and in
algebraic combinatorics is the class of left regular
bands~\cite{Brown1,Brown2,DiaconisBrown1,Aguiar,Saliola,Saliolahyperplane,bjorner1,bjorner2,GrahamLRB}.
\begin{Def}
    \label{DefLRB1}
A \emph{band} is a monoid $B$ satisfying the identity
\begin{gather}
    \label{idempotent}
    x^2=x \text{ for all } x \in B.
\end{gather}
A band is \emph{left regular} if it satisfies the identity
\begin{gather}
    \label{leftregularity}
    xyx = xy \text{ for all } x,y \in B.
\end{gather}
\end{Def}

The class of left regular bands is hence a variety of bands. It is known
(cf.~\cite[Proposition~7.3.2]{qtheor}) to be generated by the band $\{0,+,-\}$ where $0$ is the identity
element and the binary operation $\circ$ is given by
\begin{gather*}
    + \circ + = + \circ - = +
    \quad\text{ and }\quad
    - \circ + = - \circ - = -
\end{gather*}
Important examples of left regular bands arising in combinatorics are real and
complex hyperplane face semigroups, oriented matroids, matroids and interval
greedoids. Other interesting examples will be seen in Section
\ref{s:LRBExamples}.

It is easy to see that a band $B$ is left regular if and only if each left
ideal is two-sided.  Indeed, if $B$ is left regular and $L$ is a left ideal,
then $a\in L$ implies $ab=aba\in L$ for all $b\in B$.  Conversely, if each
two-sided ideal is a left ideal, then $ab\in Ba$ and so $aba=ab$ for all
$a,b\in B$.

\subsection{Support lattice and the support map}
If $B$ is a band, then it was shown by Clifford~\cite{Clifford,CP} that the
principal ideals are closed under intersection and hence form a (meet)
semilattice $\Lambda(B)$ with maximum. More precisely, he proved that $BaB\cap
BbB=BabB$ and hence $\sigma\colon B\to \Lambda(B)$ given by $\sigma(a)=BaB$ is
a monoid homomorphism.  In fact, $\sigma$ is the universal map from $B$ to a
semilattice.

If $B$ is a finite left regular band, then $\Lambda(B)$ is
the set of principal left ideals, which is a lattice under inclusion with
intersection as the meet.
Following the standard convention of lattice theory, we denote by $\wh 1$ the
top of $\Lambda(B)$ (which is $B$, itself) and by $\wh 0$ the bottom (called the \emph{minimal ideal} of $B$).
Brown calls $\Lambda(B)$ the \emph{support lattice} of $B$~\cite{Brown1,Brown2} (actually, he uses the opposite ordering).  The map
$\sigma\colon B\to \Lambda(B)$, above, becomes $\sigma(a)=Ba$ and is called the
\emph{support map}. It is possible to give a definition of left regular bands
in terms of the support map; see e.g.~\cite[Appendix B]{Brown1} for a proof of the
following.
\begin{Prop}
    \label{DefLRB2}
    A finite monoid $M$ is a \emph{left regular band}
    if and only if there exist a lattice $\Lambda$ and a surjection
    $\sigma\colon M \to \Lambda$ satisfying the following
    properties for all $x,y \in M$:
    \begin{gather}
    \sigma(xy) = \sigma(x) \wedge \sigma(y) \\
    xy = x \text{ if and only if }\sigma(y) \geq \sigma(x)
    \end{gather}
    where $\wedge$ denotes the meet operation (greatest lower bound) of the
    lattice $\Lambda$.
\end{Prop}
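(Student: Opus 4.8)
The plan is to prove both directions of Proposition~\ref{DefLRB2} separately.

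\medskip

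\emph{The forward direction.} Suppose $M$ is a finite left regular band. I would take $\Lambda = \Lambda(M)$, the support lattice, and $\sigma\colon M \to \Lambda(M)$ the support map $\sigma(a) = Ma$ discussed just before the statement. (One should first note that the remarks preceding the proposition establish that $\Lambda(M)$ is a lattice under inclusion with meet given by intersection, and that by Clifford's result $\sigma$ is a semilattice homomorphism, giving $\sigma(xy) = \sigma(x)\wedge\sigma(y)$.) So the content is the second condition. First I would observe that $xy = x$ implies $\sigma(y) \geq \sigma(x)$: this is immediate from applying $\sigma$, since $\sigma(x) = \sigma(xy) = \sigma(x)\wedge\sigma(y) \leq \sigma(y)$, and in fact does not even use left regularity. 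For the converse, suppose $\sigma(y) \geq \sigma(x)$, i.e.\ $Mx \subseteq My$. Then $x \in My$, so $x = zy$ for some $z \in M$. Now I would compute $xy = zyy = zy = x$ using idempotency $y^2 = y$. (Notice this direction also does not need left regularity per se, only that we are working with the two-sided-ideal description --- but the fact that $\Lambda(M)$ being the set of \emph{principal left} ideals closed under intersection is exactly where left regularity entered, via the observation that in an LRB every left ideal is two-sided.) This completes the forward direction.

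\medskip

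\emph{The reverse direction.} Suppose we are given a finite monoid $M$, a lattice $\Lambda$, and a surjection $\sigma$ satisfying the two displayed conditions. I must show $M$ satisfies $x^2 = x$ and $xyx = xy$. For idempotency: by the first condition $\sigma(x) = \sigma(x) \wedge \sigma(x)$, so trivially $\sigma(x) \geq \sigma(x)$, and then the second condition (with the roles ``$xy = x$ iff $\sigma(y) \geq \sigma(x)$'' applied with $y$ replaced by $x$) gives $x\cdot x = x$, i.e.\ $x^2 = x$. For left regularity: I want $xyx = xy$. By the first condition, $\sigma(xyx) = \sigma(x)\wedge\sigma(y)\wedge\sigma(x) = \sigma(x)\wedge\sigma(y) = \sigma(xy)$; in particular $\sigma(xyx) \geq \sigma(x)$ since $\sigma(x)\wedge\sigma(y) \leq \sigma(x)$ would go the wrong way --- so instead I should apply the second condition differently. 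The clean route: set $z = xy$ and $w = x$; then $\sigma(w) = \sigma(x) \geq \sigma(x)\wedge\sigma(y) = \sigma(xy) = \sigma(z)$, so by the second condition $zw = z$, that is $(xy)x = xy$, which is exactly $xyx = xy$. Finally one should record (or cite~\cite[Appendix B]{Brown1}) that $\sigma$ is then automatically the support map up to the canonical isomorphism $\Lambda \cong \Lambda(M)$.

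\medskip

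\emph{Expected obstacle.} There is essentially no deep obstacle here; the proposition is a routine unwinding of definitions, and indeed the excerpt already attributes it to~\cite[Appendix B]{Brown1}. The only point requiring a little care is bookkeeping about \emph{which} ideals one uses --- principal two-sided ideals (Clifford, for arbitrary bands) versus principal left ideals (which coincide with two-sided ideals precisely because of left regularity) --- and making sure, in the reverse direction, that the meet in the hypothesized abstract lattice $\Lambda$ really does get identified with intersection of principal left ideals under the isomorphism $\Lambda \cong \Lambda(M)$; this follows because $\sigma$ is surjective and, by the second condition, $\sigma(y) \geq \sigma(x) \iff Mx \subseteq My$, so $\sigma$ descends to an order isomorphism onto the poset of principal left ideals, under which the meet-semilattice structures match.
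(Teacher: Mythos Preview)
Your argument is correct in both directions; the forward implication uses exactly the support-lattice construction set up in the paragraph preceding the proposition, and the reverse implication is a clean unwinding of the two displayed conditions. Note that the paper does not actually give its own proof of this proposition---it simply cites \cite[Appendix~B]{Brown1}---so there is nothing to compare against, but what you have written is precisely the standard argument one finds there.
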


In semigroup parlance, this is the well known fact that a left regular band is the same thing as a semilattice of left zero semigroups.

\subsection{Green's $\R$-order}
\label{ss:Rorder}
Let $M$ be a monoid. \emph{Green's $\R$-preorder} is defined on $M$ by
$m\leq_\R n$ if $mM\subseteq nM$. The associated equivalence relation is
denoted $\R$ and is one of \emph{Green's relations} on a monoid. See~\cite{CP,Green}.

If $B$ is a band, one has $aB\subseteq bB$ if and only if $ba=a$.
Hence,
\begin{align*}
    a \leq_\R b\ \text{if and only if}\ ba = a.
\end{align*}
In a left regular band, if $ba=a$ and $ab=b$, then $a=aba=ab=b$.
It follows that a left regular band $B$ is partially ordered with respect to $\leq_\R$. (In fact a band is left regular if and only if $\leq_\R$ is a partial order.) We call this
partial order the \emph{$\R$-order} on $B$ and denote it simply by $\leq$.
Note that the support map $\sigma\colon B \to \Lambda(B)$ is order-preserving:
that is, if $a \leq b$, then $\sigma(a) \leq \sigma(b)$.
Figures \ref{figure: free lrb}, \ref{fig:3LinesRorder} and \ref{multtableRorder}
illustrate the $\R$-order on three examples.

The following is a special case of an elementary result of
Rhodes~\cite{resultsonfinite}.

\begin{Lemma}\label{Rhodeselem}
Let $B$ be a left regular band with support map $\sigma\colon B\to \Lambda(B)$.  Then we have the following.
\begin{enumerate}
\item If $b_0<b_1<\cdots<b_n$ in $B$, then $\sigma(b_0)<\sigma(b_1)<\cdots<\sigma(b_n)$.
\item If $X_0<X_1<\cdots<X_n$ is a chain in $\Lambda(B)$, then there is a chain $b_0<b_1<\cdots<b_n$ in $B$ with $\sigma(b_i)=X_i$ for all $0\leq i\leq n$.
\end{enumerate}
\end{Lemma}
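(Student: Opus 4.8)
The plan is to deduce both statements directly from Proposition~\ref{DefLRB2} together with the rule that, on a band, $a \leq_\R b$ means $ba = a$. For part~(1), since $\sigma$ is order-preserving, only equality needs to be excluded. Suppose $b_i < b_{i+1}$ yet $\sigma(b_i) = \sigma(b_{i+1})$. Then $\sigma(b_i) \geq \sigma(b_{i+1})$, so the second clause of Proposition~\ref{DefLRB2} (with $x = b_{i+1}$, $y = b_i$) gives $b_{i+1} b_i = b_{i+1}$; but $b_i \leq_\R b_{i+1}$ gives $b_{i+1} b_i = b_i$, whence $b_i = b_{i+1}$, a contradiction. Applying this to each consecutive pair of the chain yields $\sigma(b_0) < \sigma(b_1) < \cdots < \sigma(b_n)$.

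For part~(2), I would first use surjectivity of $\sigma$ to pick elements $c_0, \dots, c_n \in B$ with $\sigma(c_i) = X_i$, and then set
\[
  b_i \;=\; c_n c_{n-1} \cdots c_{i+1} c_i \qquad (0 \leq i \leq n),
\]
so that decreasing the index appends new letters \emph{on the right}. Two verifications remain. Since $\sigma$ is a homomorphism into the meet-semilattice $\Lambda(B)$, we get $\sigma(b_i) = X_n \wedge X_{n-1} \wedge \cdots \wedge X_i = X_i$, the last equality because $X_i < X_{i+1} < \cdots < X_n$ is a chain. For the order relation, when $i < j$ one has $b_i = b_j\,(c_{j-1} \cdots c_i)$, so idempotency of $b_j$ gives $b_j b_i = b_j b_j (c_{j-1}\cdots c_i) = b_j (c_{j-1}\cdots c_i) = b_i$, i.e.\ $b_i \leq_\R b_j$; and $b_i \neq b_j$ because $\sigma(b_i) = X_i \neq X_j = \sigma(b_j)$. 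Hence $b_0 < b_1 < \cdots < b_n$ with $\sigma(b_i) = X_i$, as required.

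Both arguments are short, consistent with the lemma being quoted as elementary; the only point that takes a moment of thought is the choice of word in part~(2): the factors must be ordered as $c_n \cdots c_i$ precisely so that $b_j$ is a left factor of $b_i$ whenever $i < j$, allowing the identity $b_j^2 = b_j$ to absorb the surplus. (Equivalently one can run an induction on $n$, defining $b_i = b_{i+1} c_i$ from the top down, but the closed form makes the verification more transparent.)
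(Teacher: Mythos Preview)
Your proof is correct and follows essentially the same approach as the paper: part~(1) is the observation that $a\leq b$ with $\sigma(a)=\sigma(b)$ forces $a=ba=b$, and part~(2) uses the identical construction $b_i = c_n c_{n-1}\cdots c_i$ (the paper writes $a_i$ for your $c_i$). You supply more verification detail than the paper, which simply states the construction and leaves the checks to the reader, but there is no difference in substance.
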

\begin{proof}
For the first statement, it suffices to observe that if $a\leq b$ and $\sigma(a)=\sigma(b)$, then $a=ba=b$.  For the second statement, choose $a_i$ with $\sigma(a_i)=X_i$, for $0\leq i\leq n$, and define $b_i = a_na_{n-1}\cdots a_i$, for $0\leq i\leq n$.
\end{proof}

\subsection{Local, induced and interval submonoids}
\label{ss:intervalsubmonoids}
If $X\in \Lambda(B)$, then
\begin{align*}
B_{\geq X} = \{b\in B\mid \sigma(b)\geq X\}
\end{align*}
is a submonoid of $B$.  The set $\nup X = B\setminus B_{\geq X}$ is a prime
ideal of $B$ and all prime ideals of $B$ are obtained in this way.
(Recall that an ideal $P$ is \emph{prime} if $ab \in P$ implies $a\in P$ or $b
\in P$.)

Note that
\[a^\uparrow = \{b\in B\mid b\geq a\}= \{b\in B\mid ba=a\}\] is a submonoid of $B$,
in fact, it is the left
stabilizer of $a$.  Notice that if $a\in B$, then $aB=aBa$ is a left regular
band with identity $a$ and the map $b\mapsto ab$ gives a retraction
$\tau_a\colon B\to aB$.  The monoid $aB$ is called the \emph{local submonoid}
of $B$ at $a$.  One has $\Lambda(aB)=\Lambda(B)_{\leq\sigma(a)}$, the principal
downset of $\Lambda(B)$ generated by $\sigma(a)$.  If $\sigma(a)=X=\sigma(b)$,
then $aB\cong bB$ via the restriction of $\tau_a$ to $bB$ and the restriction of $\tau_b$ to $aB$. The corresponding left regular band
(well defined up to isomorphism) will be denoted $B[X]$ and called the
\emph{induced submonoid on $X$}.

If $X\leq Y$ in $\Lambda(B)$, then $B_{\geq
X}[Y]\cong B[Y]_{\geq X}$ and this left regular band will be denoted $B[X,Y]$.
It has support lattice the interval $[X,Y]$ of $\Lambda(B)$.  Hence we shall
call $B[X,Y]$ the \emph{interval submonoid} of $B$ associated to $[X,Y]$.  Of
course, $B[\wh 0, X]=B[X]$ and $B[X,\wh 1] = B_{\geq X}$.  It will be
convenient to denote by $B[X,Y)$ the ideal of $B[X,Y]$ obtained by removing the
identity.

\section{Examples of Left Regular Bands}
\label{s:LRBExamples}
This section surveys some examples of left regular bands.
We illustrate how several of the examples of left regular bands found in the combinatorics
literature are special cases of certain semigroup-theoretic constructions; and
we introduce a new class of examples: \emph{free partially commutative left
regular bands}.

\subsection{Free left regular bands}
The \emph{free left regular band} on a set $A$ is denoted $F(A)$ and the free left
regular band on $n$-generators will be written $F_n$.  The word problem for
$F(A)$ is quite elegant.  Let $A^*$ denote the free monoid on $A$.  One can
view $F(A)$ as consisting of all injective words over $A$, i.e., those elements of $A^*$ with no repeated letters.
Multiplication is given by concatenation followed by removal of repetitions
(reading from left to right).  In particular, if $A$ is finite, then so is
$F(A)$.  Hence any finitely generated left regular band is finite (actually any
finitely generated band is finite).  The support lattice of $F(A)$ can be identified with the power set $P(A)$ with the operation of union.  The support map $\sigma$ takes an injective word to its content (or alphabet).  If $C\subsetneq B\subseteq A$, then $F(A)[B,C]\cong F(B\setminus C)$.

\begin{figure}
\begin{gather*}
\xymatrix@l@C=0pt@R=1em{
cba \ar@{-}[d] && cab \ar@{-}[d] && bca \ar@{-}[d] && bac \ar@{-}[d] & & acb \ar@{-}[d] && abc \ar@{-}[d] \\
 cb \ar@{-}[dr] &&  ca \ar@{-}[dl] &&  bc \ar@{-}[dr] &&  ba \ar@{-}[dl] & &  ac \ar@{-}[dr] &&  ab \ar@{-}[dl] \\
    &c \ar@{-}[drrrr] &    &&    &b\ar@{-}[d]&     & &    &a\ar@{-}[dllll]& \\
    &&     &&    &1&
}
\hspace{3em}
\xymatrix@l@C=1em@R=1em{
& \{a,b,c\} \ar@{-}[dl] \ar@{-}[rd] \ar@{-}[d] \\
\{b,c\} \ar@{-}[d]\ar@{-}[dr] & \{a,c\} \ar@{-}[dl]\ar@{-}[dr] & \{a,b\} \ar@{-}[d]\ar@{-}[dl] \\
\{c\} \ar@{-}[dr]  & \{b\} \ar@{-}[d]  & \{a\} \ar@{-}[dl] \\
 & \emptyset
}
\end{gather*}
\caption{The $\R$-order and the support lattice of $F(\{a,b,c\})$.}
\label{figure: free lrb}
\end{figure}

\subsection{Hyperplane face monoids and oriented matroids}
\label{ss:hyperplanefacemonoids}
The set of faces of a hyperplane arrangement, and more generally the set of
covectors of an oriented matroid, is endowed with a natural associative product
providing an important source of examples of left regular bands. These turn out
to be submonoids of $\{0,+,-\}^n$, where $\{0,+,-\}$ is as defined in Section~\ref{ss:LRBs1}.

We recall the construction and properties of these left regular bands referring
the reader to \cite[Appendix~A]{Brown1} for details.
A \emph{central hyperplane arrangement} in $V = \mathbb R^n$ is a finite
collection $\AAA$ of hyperplanes of $V$ passing through the origin. For each
hyperplane $H \in \AAA$, fix a labelling $H^+$ and $H^-$ of the two open half
spaces of $V$ determined by $H$; the choice of labels $H^+$ and $H^-$ is
arbitrary, but fixed throughout. For convenience, let $H^0 = H$.

A \emph{face} $x$ of $\AAA$ is a non-empty intersection of the form
$
x = \bigcap_{H \in \AAA} H^{\epsilon_H}
$
with $\epsilon_H \in \{0,+,-\}$. Consequently, for every hyperplane $H \in
\AAA$, a face of $\AAA$ is contained in exactly one of $H^+$, $H^-$ or $H^0$.
If $y$ is a face of $\AAA$ and $H \in \AAA$, let $\varepsilon_H(y) \in
\{0,+,-\}$ be such that $y \subseteq H^{\varepsilon_H(y)}$. The sequence
$\varepsilon(y) = (\varepsilon_H(y))_{H \in \AAA}$ is called the \emph{sign
sequence} of $y$ and it completely determines $y$.


\begin{figure}[htb]
\centering
\begin{tikzpicture}
    \draw[line width=1.5pt] (-2,-2) -- (2,2);
    \draw[line width=1.5pt] (2,-2) -- (-2,2);
    \draw[line width=1.5pt] (-3,0) -- (3,0);
    \draw (0,0)       node[fill=white,font=\tiny] {$(000)$};
    \draw (2,0.75)    node[font=\small]           {$(+++)$};
    \draw (2,0)       node[fill=white,font=\tiny] {$(0++)$};
    \draw (2,-0.75)   node[font=\small]           {$(-++)$};
    \draw (1.4,-1.4)  node[fill=white,font=\tiny] {$(-+0)$};
    \draw (0,-1.5)    node[font=\small]           {$(-+-)$};
    \draw (-1.4,-1.4) node[fill=white,font=\tiny] {$(-0-)$};
    \draw (-2,-0.75)  node[font=\small]           {$(---)$};
    \draw (-2,0)      node[fill=white,font=\tiny] {$(0--)$};
    \draw (-2,0.75)   node[font=\small]           {$(+--)$};
    \draw (-1.4,1.4)  node[fill=white,font=\tiny] {$(+-0)$};
    \draw (0,1.5)     node[font=\small]           {$(+-+)$};
    \draw (1.4,1.4)   node[fill=white,font=\tiny] {$(+0+)$};
\end{tikzpicture}
\caption{The sign sequences of the faces of the hyperplane arrangement in
$\mathbb R^2$ consisting of three distinct lines.}
\label{fig:3Lines}
\end{figure}

\begin{figure}[htb]
\small
\begin{gather*}
\xygraph{
!{0;/r18mm/:,p+/u18mm/::}
!~:{@{-}}
{(---)}(
    :[u]{(0--)}
    :[dl]{(+--)}
    :[u]{(+-0)}
    :[dl]{(+-+)}
    :[u]{(+0+)}
    :[dl]{(+++)}
    :[u]{(0++)}
    :[dl]{(-++)}
    :[u]{(-+0)}
    :[dl]{(-+-)}
    :[u]{(-0-)}
    :"(---)"),
"{(-0-)}":[rrru(.85)]{(000)}(:"(-+0)",:"{(0++)}",:"(+0+)",:"(+-0)",:"(0--)")
 )}
\end{gather*}
\caption{The $\R$-order on the face monoid of Figure \ref{fig:3Lines}.}
\label{fig:3LinesRorder}
\end{figure}

Let $\FFF$ denote the set of faces of $\AAA$. The image of $\varepsilon$
identifies $\FFF$ with a submonoid of $\{0,+,-\}^{\AAA}$ and so we
obtain a monoid structure on $\FFF$ by defining the product of $x, y \in \FFF$
to be the face with sign sequence $\varepsilon(x) \circ \varepsilon(y)$.
In other words, $xy$ is defined by the property that $xy$ lies: on the same
side of $H$ as $x$ if $x \not\subseteq H$; on the same side of $H$ as $y$ if $x
\subseteq H$, but $y \not\subseteq H$; and inside $H$ if $x,y \subseteq H$.
This product admits an alternative geometric description: $xy$ is the unique
face---possibly $x$ itself---containing the point obtained by moving a small
positive distance along a straight line from a point in $x$ toward a point in
$y$.


The left regular band $\FFF$ is called the \emph{face monoid} of $\AAA$.
The lattice $\Lambda(\FFF)$ of principal left ideals of $\FFF$ is isomorphic to
the \emph{intersection lattice} $\LLL$ of $\AAA$; it is the set of subspaces of
$V$ that can be expressed as an intersection of hyperplanes from $\AAA$ ordered
by reverse inclusion. Under this isomorphism, the universal map $\sigma:
\FFF \to \Lambda(\FFF)$ corresponds to the map from $\FFF$ to $\LLL$ that sends
a face $x$ to the smallest subspace $\bigcap_{\{H \in \AAA\mid x \subseteq H\}} H
\in \LLL$ that contains $x$. Observe that for $X < Y$ in $\LLL$,  $\FFF[X,Y]$ is the face monoid of the hyperplane arrangement in $X$
obtained by intersecting $X$ with the hyperplanes $H \in \AAA$ containing $Y$
but not $X$.

\subsubsection{Oriented matroids}
An \emph{oriented matroid} $\OM$ is an abstraction of the properties enjoyed by a
configuration of vectors in a vector space, or what amounts to the same thing
by working instead with their orthogonal complements, a hyperplane arrangement.
They are also submonoids of the left regular band $\{0, +, -\}^n$
\cite[\S4.1]{OrientedMatroids1993}, but not all submonoids of $\{0, +, -\}^n$
are oriented matroids (\textit{cf}. \S\ref{ss:geometricLRBs}). Much of the
monoid structure of $\OM$, as well as the structure of its monoid algebra,
parallels the theory for the face monoid of a hyperplane arrangement. See
\cite[\S6]{DiaconisBrown1} and \cite[\S11]{Saliolahyperplane} for details.

\subsection{Complex hyperplane arrangements}
In this section we describe a left regular band associated to a complex hyperplane arrangement following~\cite{bjorner2} and~\cite{BjornerZiegler1992}.  All unproved assertions can be found in these references.

Define a left regular band structure on $\SSS=\{0,+,-,i,j\}$ via the multiplication table in Figure~\ref{multtableRorder}.  The Hasse diagram of the $\mathscr R$-order of $\SSS$ is also depicted in Figure~\ref{multtableRorder}.
\begin{figure}[htbp]
\centering
\begin{gather*}
\begin{array}{c}
\begin{array}{c| c c c c c}
 & 0 & + & - & i & j\\ \hline
0& 0& + &- &i &j \\
+ &+&+ & + &i & j\\
- & - &- & -  & i & j\\
i & i & i &i & i & i\\
j & j & j & j & j & j
\end{array}
\end{array}
\hspace{5em}
\begin{array}{c}
\xymatrix{ & 0 & \\
                  +\ar@{-}[ru]&  & -\ar@{-}[lu]\\
                  i\ar@{-}[u]\ar@{-}[rru] &  & j\ar@{-}[u]\ar@{-}[llu]}
\end{array}
\end{gather*}
\caption{The multiplication table and $\R$-order of $\SSS$.}
\label{multtableRorder}
\end{figure}

Define a function $\sgn\colon \mathbb C\to \SSS$ by
\[\sgn(x+iy) = \begin{cases}i, & \text{if}\ y>0,\\ j, & \text{if}\ y <0,\\ +, & \text{if}\ y=0, x>0,\\ -, & \text{if}\ y=0, x<0, \\ 0, & \text{if}\ x=0=y. \end{cases}\]

A \emph{complex hyperplane arrangement} is a set  $\mathcal A=\{H_1,\ldots, H_n\}$ where $H_i$ is the zero set of a complex linear form $f_i$ on $\mathbb C^d$, for $1\leq i\leq n$.  We always assume that $H_1\cap\cdots \cap H_n=\{0\}$.  The position of a point $z\in \mathbb C^d$ relative to $\mathcal A$ can be described by the map $\tau\colon \mathbb C^d\to \SSS^n$ given by \[\tau(z_1,\ldots,z_d)= (\sgn(f_1(z_1)),\ldots, \sgn(f_d(z_d))).\] The image $\FFF=\tau(\mathbb C^d)$ is a submonoid of $\SSS^n$.  Moreover, for each $F\in \FFF$, one has that $\tau^{-1}(F)$ is a relative-open convex cone.  The intersections $\tau^{-1}(F)\cap S^{2d-1}$ are the open cells of a regular CW-decomposition of $S^{2d-1}$ and the face poset of this decomposition is the opposite of the $\mathscr R$-order on $\FFF$ (where the identity corresponds to the empty face).  See~\cite[Theorem~2.5]{BjornerZiegler1992} for details.  For this reason, elements of $\FFF$ will be called \emph{faces} and we will call $\FFF$ the \emph{face monoid} of $\mathcal A$.  The minimal ideal of $\FFF$ consists of all elements of $\FFF\cap \{i,j\}^n$, cf.~\cite[Proposition~3.1]{bjorner2}.  It is shown in~\cite[Theorem~3.5]{BjornerZiegler1992} that the $\mathscr R$-order on the ideal  $\FFF\cap (\SSS\setminus \{0\})^n$ of $\FFF$ is the face poset of a regular CW complex that is homotopy equivalent to the complement $\mathbb C^d\setminus (H_1\cup\cdots \cup H_n)$.

The \emph{augmented intersection lattice} $\LLL$ of $\mathcal A$ is the collection of all intersections of elements of
\[\mathcal A_{\mathrm{aug}} = \{H_1,\ldots, H_n, H_1^{\mathbb R},\ldots, H_n^{\mathbb R}\}\] ordered by reverse inclusion.  Here $H_i^{\mathbb R} = \{z\in \mathbb C^d\mid \sgn(f_i(z))\in \{0,+,-\}\}$, which is a real hyperplane defined by $\Im (f_i(z))=0$.  One has that $\LLL$ is the support lattice of $\FFF$ and the support map takes $F\in \FFF$ to the intersection of all elements of $\mathcal A_{\mathrm{aug}}$ containing $\tau\inv(F)$~\cite[Proposition~3.3]{bjorner2}.  The lattice $\LLL$ is a semimodular lattice of length $2d$~\cite[Proposition~3.2]{bjorner2}.   More generally, if $X<Y$ in $\LLL$ then the length of the longest chain from $X$ to $Y$ in $\LLL$ is $\dim X-\dim Y$.

\subsection{The Karnofsky-Rhodes expansion}
\label{ss:KarnofskyRhodesExpansion}
If $L$ is a lattice generated (under meet) by a finite set $A$, then there is a
universal $A$-generated left regular band with support lattice $L$, known as
the Karnofsky-Rhodes expansion of $L$.  Let us first describe the construction
for monoids in general.

Let $M$ be a monoid with generating set $A$; we do not assume $A\subseteq M$
although we treat it this way notationally.  If $w\in A^*$, then $[w]_M$ will
denote the image of $w$ under the canonical projection $A^*\to M$. Let
$\Gamma_A(M)$ be the \emph{right Cayley graph} of $M$ with respect to the
generators $A$;  so $\Gamma_A(M)$ is the digraph (or quiver, if you like) with
vertex set $M$ and edge set $M\times A$ where the edge $(m,a)$ goes from $m$ to
$ma$.  We usually think of this edge as being labeled by $a$ and draw it
\begin{align*}
    \xymatrix{m\ar[r]^a & ma.}
\end{align*}
Given any vertex $m\in \Gamma_A(M)$ and word $w\in A^*$, there is a unique path
labeled by $w$ with initial vertex $m$ (the terminal vertex will be $m[w]_M$).
We call this the path \emph{read} by $w$ from $m$.
Let us say that an edge $e$ of $\Gamma_A(M)$ is a \emph{transition edge}, if
its initial and terminal vertices are in different strongly connected components of
$\Gamma_A(M)$.

Define an equivalence relation on $A^*$ by putting $u\equiv v$ if and only if:
\begin{itemize}
\item $[u]_M =[v]_M$;
\item the sets of transition edges visited by the paths read from $1$ in
    $\Gamma_A(M)$ by $u$ and $v$ coincide.
\end{itemize}
It is known that $\equiv$ is a congruence~\cite{Elston}; clearly it is
contained in the kernel congruence of the projection $A^*\to M$.  The
\emph{Karnofsky-Rhodes expansion} of $M$ with respect to generators $A$ is
given by $\wh M_A=A^*/{\equiv}$.  This construction is an endofunctor of the
category of $A$-generated monoids (with morphisms preserving generators).
Moreover, the collection of canonical projections $\eta_M\colon \wh M_A\to M$
constitute a natural transformation to the identity functor.

Suppose now that $L$ is an $A$-generated meet semilattice with identity.  Since
each strong component of the Cayley graph of $L$ has a unique vertex,
the word problem for $\wh L_A$ is much simpler.  Let
$w=a_1\cdots a_n$ be in $A^*$ with the $a_i\in A$.  We say that $a_i$ is a
\emph{transition} of $w$ if $[a_1\cdots a_{i-1}]_L>[a_1\cdots a_i]_L$.  The
empty string has no transitions.  Notice that $a_1$ is a transition if and only
if $[a_1]_L\neq 1$.  The transitions of $w$ are exactly the labels of the transition
edges visited in $\Gamma_A(L)$ by the path read from $1$ by $w$.

We say that $w$ is \emph{reduced} if either it is empty, or all its letters are
transitions.  In other words, $w=a_1\cdots a_n$ is reduced if and only if
\begin{align*}
    1>[a_1]_L>[a_1a_2]_L>\cdots>[a_1\cdots a_n]_L.
\end{align*}
Define the reduction $\reduce(w)$ to be the word obtained from $w$ by
erasing all its letters that are not transitions.  Notice that $w$ is reduced
if and only if $w=\reduce(w)$.  It is easy to see from the definition of
the Karnofsky-Rhodes expansion that $w$ and $\reduce(w)$ represent the
same element of $\wh L_A$ and that distinct reduced words represent distinct
elements of $\wh L_A$.  Thus $\wh L_A$ can be viewed as the set of reduced
words with product $vw=\reduce(vw)$.  It is routine to verify that $\wh L_A$ is
a left regular band and $\eta_L\colon \wh L_A\to L$ is the support map.  It
follows from the universal property of the Karnofsky-Rhodes expansion given by
Elston~\cite{Elston} that if $B$ is any $A$-generated left regular band with
support lattice $L$ (with the support map $\sigma$ the identity on $A$), then
there is a unique surjective homomorphism $\p\colon \wh L_A\to B$ such that the
diagram
\begin{align*}
    \xymatrix{{\wh L_A}\ar[rr]^{\p}\ar[rd]_{\eta_L} && B\ar[ld]^\sigma\\ & L &}
\end{align*}
commutes.

\begin{Rmk}
Ken Brown considers reduced words in~\cite{Brown1} as part of his proof of the
diagonalizability of random walks on left regular bands.  His proof essentially
boils down to showing that a random walk on a Karnofsky-Rhodes expansion of a
semilattice is diagonalizable and then deducing the result for arbitrary left
regular bands from this case.
\end{Rmk}

We now consider some examples.  Nearly all of these examples can be found in
Brown's paper~\cite{Brown1}.

\begin{Example}[Free left regular band]
Consider the free semilattice on a finite set $A$; it is the power set $P(A)$
ordered by reverse inclusion (and so the operation is union).  Then a letter
$a_i$ of $w=a_1\cdots a_n$ is a transition if and only if $a_i\notin
\{a_1,\ldots,a_{i-1}\}$.  Thus the reduced words are precisely the injective words.  If $w$ is an arbitrary word, then $\reduce(w)$ is
obtained from $w$ by removing repeated letters (reading from left to right).
Thus $\wh{P(A)}_A=F(A)$.  One can also easily deduce this from the universal property.
\end{Example}

\begin{Example}
The following example is from~\cite[Section 5.1]{Brown1}.  Let $\ov {F}_n$ be
the quotient of $F_n$ that identifies an injective word $w$ of length
$n-1$ with the unique injective word of length $n$ with $w$ as a prefix.
Let $L(n)$ be the quotient of the free semilattice on $n$-generators that
identifies all subsets of size $n-1$ with the subset of size $n$.  Then
$\ov{F}_n=\wh {L(n)}_A$ where $A=\{1,\ldots,n\}$.
\end{Example}

\begin{Example}[$q$-analogue]
Next we consider the example from~\cite[Section~1.4]{Brown1}.  Let $q$ be a
prime power and let $\mathbb F_q$ denote the field of $q$ elements.  Define
$F_{n,q}$ to consist of all ordered linearly independent subsets $(x_1,\ldots,
x_s)$ of $\mathbb F_q^n$ with product
\begin{align*}
    (x_1,\ldots,x_s)(y_1,\ldots,y_t) = (x_1,\ldots,x_s,y_1,\ldots, y_t)^{\wedge}
\end{align*}
where $\wedge$ means delete any vector that is linearly dependent on the
preceding vectors.  Set $L(n,q)$ to be the lattice of subspaces of $\mathbb
F_q^n$ ordered by reverse inclusion and put $A=\mathbb F_q^n\setminus \{0\}$.
Define $\sigma\colon A\to L(n,q)$ by sending a vector $v\neq 0$ to the
one-dimensional subspace it spans.  Then it is easy to see that
$\wh{L(n,q)}_A=F_{n,q}$.
\end{Example}

\begin{Example}[Matroids]
\label{ex:matroids}
The following example is from~\cite[Section~6.2]{Brown1}.
Let $\mathcal M$ be a matroid with underlying set $E$; see~\cite{Oxley} for the
basic definitions.
The associated monoid $M$ consists of all ordered independent subsets
$(x_1,\ldots, x_s)$ of $E$.  The product is given by
\begin{align*}
    (x_1,\ldots,x_s)(y_1,\ldots,y_t) = (x_1,\ldots,x_s,y_1,\ldots, y_t)^{\wedge}
\end{align*}
where $\wedge$ means delete any element that depends on earlier elements.  Let
$A$ be the set of non-loops of $E$ and let $L$ be the lattice of flats of
$\mathcal M$ ordered by reverse inclusion.  Let $\sigma\colon A\to L$ be given
by sending $a\in A$ to its closure.  Then it is not hard to check that $M=\wh
L_A$.
\end{Example}

\begin{Example}[Interval greedoids]
\label{ex:intervalgreedoids}
Let $L$ be a semimodular lattice and let $E$ be the set of join-irreducible
elements of $L$ and make $L$ a monoid via the join operation.  Then the dual
lattice $L^\vee$ is a meet semilattice.  Bj\"orner associates an interval
greedoid and a left regular band to the pair $(L,E)$~\cite{bjorner2}.  It is
easy to see that his left regular band is the submonoid of $\wh {L^\vee}_E$
consisting of those reduced words whose associated chain consists of covers in
the order.
\end{Example}

Notice that if $L$ is an $A$-generated lattice, $X\in L$ and $A_{\geq X}$
denotes the set of elements of $A$ which are above $X$, then $(\wh L_A)_{\geq
X}=(\wh {L_{\geq X}})_{A_{\geq X}}$.

\subsection{The Rhodes expansion}
\label{ss:RhodesExpansion}
Next we consider the Rhodes expansion of a lattice.  The notion is defined more
generally for monoids, cf.~\cite{TilsonXII}. Let $L$ be a finite lattice with
top $\wh 1$, which we view as a monoid via its meet.  If $X\subseteq L$ and
$e\in L$, then put $eX=\{ex\mid x\in X\}$.  If $X\subseteq L$ is a chain, then
$\min X$ denotes the minimum element of $X$.  The (monoidal right) \emph{Rhodes
expansion} of $L$ is the set $\wh L$ of all chains of $L$ containing $\wh 1$
with the multiplication
\begin{align*}
    X\cdot Y = X\cup (\min X)Y.
\end{align*}
More explicitly, the product is given by
\begin{alignat*}2
(\wh 1>x_1>\cdots >x_n)&\cdot (\wh 1>y_1>\cdots>y_m) = \\ &\reduce(\wh 1>x_1>\cdots >x_n\geq x_ny_1\geq \cdots \geq x_ny_m)
\end{alignat*}
where `$\reduce$' means to remove repetitions.  The identity is the chain
consisting only of $\wh 1$.

It is not hard to see that the monoid discussed in~\cite[Section 5.2]{Brown1}
and the monoid $\ov S$ associated to a matroid in~\cite[Section 6.2]{Brown1}
are submonoids of Rhodes expansions of lattices.

\subsection{Free partially commutative left regular bands}
If $\Gamma=(V,E)$ is a simple graph, then the \emph{free partially commutative left
regular band} associated to $\Gamma$ is the left regular band $B(\Gamma)$ with
presentation
\begin{align*}
    \big\langle V \mid xy=yx \text{ for all } (x,y)\in E\big\rangle.
\end{align*}
This is the left regular band analogue of free partially commutative monoids
(also called \emph{trace monoids} or \emph{graph
monoids}~\cite{tracebook,CartierFoata}) and of free partially commutative
groups (also called \emph{right-angled Artin groups} or \emph{graph
groups}~\cite{BestvinaBrady}).  For example, if $\Gamma$ is a complete graph, then
$B(\Gamma)$ is the free semilattice on the vertex set of $\Gamma$, whereas if $\Gamma$ has
no edges, then $B(\Gamma)$ is the free left regular band on the vertex set.

The Tsetlin library Markov chain can be modelled as a hyperplane random walk,
but is most naturally a random walk on the free left regular band, see~\cite{Brown1,Brown2}.  Similarly,
the random walk on acyclic orientations of a graph considered by Athanasiadis
and Diaconis as a function of a hyperplane walk~\cite{DiaconisAthan} is most
naturally a random walk on a free partially commutative left regular band.

Let us first solve the word problem for $B(\Gamma)$.  We need to determine when
two elements of $F(V)$ yield the same element of $B(\Gamma)$.  It turns out
two injective words represent the same element of $\Gamma$ if and only if they represent the
same element of the trace monoid associated to $\Gamma$.  We will not assume, however, prior
knowledge of trace theory.

It will be convenient to denote by $\ov \Gamma$ the complementary graph of
$\Gamma=(V,E)$.  If $W\subseteq V$, then $\Gamma[W]$ denotes the induced
subgraph of $\Gamma$ with vertex set $W$ and $\ov \Gamma[W]$ the induced
subgraph of $\ov \Gamma$ with vertex set $W$.  Note that $\ov {\Gamma[W]} = \ov
\Gamma[W]$.

If $w\in F(V)$ with support $\sigma(w)\subseteq V$, then define an acyclic orientation
$\mathcal O(w)$ of $\ov \Gamma[\sigma(w)]$ by directing the edge $(x,y)$ from
$x$ to $y$ if $x$ comes before $y$ in $w$.  The following theorem is inspired by~\cite[Theorem~2.36]{tracebook}.

\begin{Thm}\label{wordproblem}
Let $\Gamma=(V,E)$ be a graph. Two elements $v,w\in F(V)$ are equal in
$B(\Gamma)$ if and only if $\sigma(v)=\sigma(w)$ and $\mathcal O(v)=\mathcal
O(w)$.   Moreover, if $W\subseteq V$ and $\mathcal O$ is an acyclic orientation
of $\ov\Gamma[W]$, then $\mathcal O=\mathcal O(w)$ if and only if $\sigma(w)=W$
and $w$ is a topological sorting of the directed graph $(\ov\Gamma[W],\mathcal
O)$.
\end{Thm}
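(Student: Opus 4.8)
The plan is to prove the ``moreover'' part first, since the forward direction of the main equivalence follows almost immediately from it, and then handle the converse. I would begin by establishing that for any $w \in F(V)$, the word $w$ really is a topological sorting of $(\ov\Gamma[\sigma(w)], \mathcal O(w))$: by construction, each directed edge $(x,y)$ of $\mathcal O(w)$ has $x$ preceding $y$ in $w$, so $\mathcal O(w)$ has no directed cycle (a directed cycle would force a letter to precede itself) and $w$ lists the vertices in an order compatible with $\mathcal O(w)$. This shows $\mathcal O(w)$ is genuinely an acyclic orientation and that $w$ is one of its topological sortings. Conversely, if $W \subseteq V$, if $\mathcal O$ is an acyclic orientation of $\ov\Gamma[W]$, and if $w \in F(V)$ satisfies $\sigma(w) = W$ and $w$ is a topological sorting of $(\ov\Gamma[W], \mathcal O)$, then for every edge $(x,y)$ of $\ov\Gamma[W]$ the orientation $\mathcal O$ points from the earlier of $x,y$ in $w$ to the later, which is exactly the defining condition for $\mathcal O(w)$; hence $\mathcal O = \mathcal O(w)$. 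This proves the ``moreover'' statement, and as a bookkeeping remark it also shows that distinct acyclic orientations of induced subgraphs of $\ov\Gamma$ are in bijection with the equivalence classes we are trying to identify, provided we can show those classes are exactly the fibers of $w \mapsto (\sigma(w), \mathcal O(w))$.

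For the main equivalence, the easy direction is: if $v = w$ in $B(\Gamma)$, then $\sigma(v) = \sigma(w)$ (the support map factors through $B(\Gamma)$, as $B(\Gamma)$ is a left regular band with support lattice $P(V)$), and $\mathcal O(v) = \mathcal O(w)$. For the latter, it suffices to check that a single application of a defining relation $xy = yx$ (with $(x,y) \in E$), applied to an injective word, does not change the associated orientation. Such a relation can only be applied where $x$ and $y$ are \emph{adjacent} letters in the injective word (otherwise swapping them either creates a repeat or the product in $F(V)$ changes); swapping two adjacent letters $x,y$ with $(x,y) \in E$ leaves the support unchanged, leaves the relative order of every other pair of letters unchanged, and does not affect the edge $\{x,y\}$ since $(x,y)\notin\ov E$. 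Hence $\mathcal O$ is unchanged. Since $B(\Gamma)$ is the quotient of $F(V)$ by the congruence generated by these relations, any chain of relations preserves $(\sigma, \mathcal O)$, giving the forward implication. I expect this is where a small amount of care is needed — one must argue that rewriting inside $F(V)$ using $xy = yx$ only ever happens between adjacent occurrences, which uses the concatenation-then-delete-repetitions description of the product in $F(V)$.

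For the converse — the genuine content of the theorem — suppose $\sigma(v) = \sigma(w) =: W$ and $\mathcal O(v) = \mathcal O(w) =: \mathcal O$. I want to show $v$ and $w$ are connected by a sequence of adjacent-transpositions each swapping a pair $(x,y) \in E$, i.e.\ each swapping two letters non-adjacent in $\ov\Gamma$. Both $v$ and $w$ are topological sortings of the same DAG $(\ov\Gamma[W], \mathcal O)$ by the ``moreover'' part, so this reduces to the standard fact that any two linear extensions of a finite poset (here the transitive closure of $\mathcal O$) are connected by a sequence of adjacent transpositions, each of which swaps an \emph{incomparable} pair of the poset. Incomparable in the transitive closure of $\mathcal O$ in particular means non-adjacent in $\ov\Gamma[W]$ (adjacent vertices are comparable), hence adjacent in $\Gamma$, so each such transposition is a legal application of a defining relation of $B(\Gamma)$ (and each intermediate word is still injective with support $W$, hence a genuine element of $F(V)$). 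Therefore $v = w$ in $B(\Gamma)$. The main obstacle is this last combinatorial lemma on linear extensions; I would prove it by induction on the number of inversions between the two sortings: if $v \neq w$, pick the first position where they disagree, locate in $w$ the letter that $v$ has there, and bubble it leftward past its neighbors — each such neighbor must be incomparable to it in the poset (else the neighbor would have to precede it in $v$ as well, contradicting minimality of the disagreement position), so each bubble step is an allowed transposition, strictly decreasing the inversion count.
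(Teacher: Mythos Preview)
Your proof is correct, and for the implication ``same $(\sigma,\mathcal O)$ $\Rightarrow$ equal in $B(\Gamma)$'' you use exactly the paper's argument: both $v$ and $w$ are topological sortings of the same acyclic directed graph, and any two such sortings are connected by adjacent transpositions of vertices not joined by an edge of $\ov\Gamma$, i.e.\ by defining relations of $B(\Gamma)$.

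Where you diverge is in the other implication. You show that $(\sigma,\mathcal O)$ is invariant under the congruence on $F(V)$ by checking directly that a single elementary application of a relation $xy=yx$ (with $(x,y)\in E$) in $F(V)$ either does nothing or swaps two adjacent letters not joined in $\ov\Gamma$, hence fixes $\mathcal O$. The paper instead proves this direction by constructing, for each edge $(x,y)$ of $\ov\Gamma$, a homomorphism $\tau\colon B(\Gamma)\to\{0,+,-\}$ sending $x\mapsto +$, $y\mapsto -$, and all other generators to $0$; the value $\tau(v)$ records which of $x,y$ appears first, so $v=w$ in $B(\Gamma)$ forces the same relative order. Your approach is more elementary and self-contained; the paper's approach buys a structural dividend, since assembling these homomorphisms (together with the support map) yields an embedding $B(\Gamma)\hookrightarrow \{0,1\}^V\times\{0,+,-\}^{\ov E}$, which is exactly what is invoked later to show that free partially commutative left regular bands are geometric. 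One small expository point: your parenthetical ``can only be applied where $x$ and $y$ are adjacent letters'' is not literally true---if $x$ or $y$ already lies in $\sigma(a)$ then $a\cdot xy\cdot b=a\cdot yx\cdot b$ in $F(V)$ and the relation applies trivially---but your argument only needs the correct statement that the effect is either trivial or an adjacent swap, so this does not affect validity.
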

\begin{proof}
Suppose first $\sigma(v)=\sigma(w)$ and $\mathcal O(v)=\mathcal O(w)$, call
this orientation $\mathcal O$.  By construction, it follows that $v$ and $w$
are topological sortings of $(\ov \Gamma[W],\mathcal O)$.  But it is well known
that any two topological sortings of an acyclic digraph can be obtained from
each other by repeatedly transposing consecutive vertices which are not
connected by an edge, cf.~\cite[Lemma~2.3.5]{tracebook}.  Thus the defining relations of
$B(\Gamma)$ let us transform $v$ to $w$ (since vertices not connected by an
edge of $\ov \Gamma$ commute in $B(\Gamma)$).  We conclude that $v$ and $w$ are
equal in $B(\Gamma)$.

For the converse, first note that the support map $\sigma\colon F(V)\to P(V)$
factors through $B(\Gamma)$ and so if $v$ and $w$ are equal in $B(\Gamma)$, then
$\sigma(v)=\sigma(w)$; call this common support $W$.  To verify that $\mathcal
O(v)=\mathcal O(w)$, it suffices to show that if $(x,y)$ is an edge of $\ov
\Gamma[W]$, then $x$ and $y$ appear in the same order in both $v$ and $w$.  Define a
mapping $\tau \colon V\to \{0,+,-\}$ by
\begin{align*}
    \tau(z) =
    \begin{cases}
        + & \text{if}\ z=x\\
        - & \text{if}\ z=y\\
        0 & \text{else.}
    \end{cases}
\end{align*}
Then since $(x,y)$ is not an edge of $\Gamma$, it follows that if $(s,t)\in E$,
then at least one of $s$ and $t$ maps to $0$.  Thus $\tau$ extends to a
homomorphism $\tau\colon B(\Gamma)\to \{0,+,-\}$ and so $\tau(v)=\tau(w)$.  But
clearly $\tau(v)$ is $+$ if $x$ appears before $y$ and $-$ if $y$ appears
before $x$, and similarly for $w$.  Thus $\mathcal O(v)=\mathcal O(w)$.
\end{proof}

It follows from the theorem that we can identify $B(\Gamma)$ with the set of
pairs $(W,\mathcal O)$ where $W\subseteq V$ and $\mathcal O$ is an acyclic
orientation of $\ov \Gamma[W]$. A vertex $v \in V$ is identified with the
trivial acyclic orientation on the induced subgraph $\ov\Gamma[\{v\}]$, which
contains no arrows. The product is then given by
\begin{align*}
    (W_1,\mathcal O_1)(W_2,\mathcal O_2)= (W_1\cup W_2,\mathcal O)
\end{align*}
where $\mathcal O$ is the orientation satisfying $(x,y)\in \mathcal O$ if
$(x,y)\in \mathcal O_1$, or if $x\in W_1$ and $y\in W_2\setminus W_1$, or if
$x,y\in W_2\setminus W_1$ and $(x,y)\in\mathcal O_2$.  The support lattice is
$P(V)$ ordered by reverse inclusion. The minimal ideal consists of the acyclic orientations of $\ov \Gamma$.

If $v\in V$ and $(V,\mathcal O)$ is an acyclic orientation of $\ov \Gamma$,
then $v\cdot (V,\mathcal O)$ is the orientation of $\ov \Gamma$ that orients
all edges containing $v$ away from $v$ and which otherwise agrees with
$\mathcal O$.  Thus the random walk on the minimal ideal of $B(\Gamma)$ driven
by a probability supported on $V$ is exactly the random walk on acyclic
orientations of $\ov \Gamma$ considered by Athanasiadis and
Diaconis~\cite{DiaconisAthan}.  The computation of the spectrum and the bounds
on the rate of convergence to stationary can be more easily computed using this
monoid.  Actually, as in the case of the Tsetlin library (which corresponds to
when $\Gamma$ has no edges) to obtain the best bounds on the rate of
convergence, one should use the quotient of $B(\Gamma)$ that identifies the
image of a word in $F(V)$ of length $|V|-1$ with the unique word of length
$|V|$ containing that word as a prefix.

If $U\subsetneq W\subseteq V$, then one easily verifies $B(\Gamma)[W,U]\cong
B(\Gamma[W\setminus U])$.  Thus the interval submonoids of $B(\Gamma)$ are
precisely the free partially commutative monoids on induced subgraphs of
$\Gamma$.

\subsection{Geometric and right hereditary left regular bands}
\label{ss:geometricLRBs}
We say that a left regular band $B$ is \emph{geometric} if
$a^\uparrow = \{b\in B\mid b\geq a\}$ is commutative (and hence a lattice under
the order $\leq$ with the meet given by the product) for all $a\in B$.  For
example, $\{0,+,-\}$ is geometric.  The class of geometric left regular bands is closed
under taking direct products and submonoids.  Therefore, the left regular bands
associated to hyperplane arrangements and oriented matroids are geometric,
whence the name.  If $B$ is geometric, then so is any local submonoid and
interval submonoid of $B$.
An example of a non-geometric left regular band is obtained by taking any left
regular band which is not a lattice and adjoining a multiplicative zero.
However, almost all the left regular bands appearing so far in the algebraic
combinatorics literature are geometric.  The main exception is the class of complex hyperplane arrangement face monoids discussed above.

Let us say that $B$ is \emph{right hereditary}, if the Hasse diagram of the
order $\leq$ is a tree. For example, the free left regular band is right
hereditary, as are the Rhodes and Karnofsky-Rhodes expansions of a lattice. In
particular, the left regular bands associated by Brown to matroids (Example
\ref{ex:matroids}) and by Bj\"orner to interval greedoids (Example
\ref{ex:intervalgreedoids}) are right hereditary.

Notice that if $B$ is right
hereditary, then so are its submonoids, as well as its local submonoids and
interval submonoids. Clearly, a left regular band $B$ which is right hereditary is
geometric since each poset of the form $a^{\uparrow}$ is a chain.  The reason
for the terminology ``right hereditary'' is that a left regular band $B$ is
right hereditary if and only if each right ideal of $B$ is projective in the
category of right $B$-sets~\cite{hereditaryactspaper}.  This is an amusing
coincidence of terminology since we shall see later that the algebra of a right
hereditary, left regular band over a field is hereditary in the ring theoretic
sense!

Free partially commutative left regular bands are also geometric.
This follows from the proof of Theorem \ref{wordproblem}, which shows that
$B(\Gamma)$ embeds in $\{0,1\}^V\times \{0,+,-\}^{\ov E}$, where $\ov E$ is the
edge set of $\ov \Gamma$. In fact, a well-known result from trace theory~\cite[Proposition~5.5.1]{tracebook} implies that each subset $a^{\uparrow}$ of $B(\Gamma)$ is a distributive
lattice.

\subsection{Left regular bands associated to acyclic quivers}
\label{ss:quiverLRBs}
Here we construct from any finite acyclic quiver $Q$ a left regular band $\BQ$
whose monoid algebra is isomorphic to the path algebra of $Q$. For quivers and
their path algebras, we adhere to the notation and conventions of \cite{assem}.

Let $Q = (Q_0, Q_1)$ be a finite acyclic quiver with vertex set $Q_0$ and set of
arrows $Q_1$. If $\alpha \in Q_1$, let $\source(\alpha)$ denote its source and
$\target(\alpha)$ denote its target.
A \emph{path} in $Q$ of \emph{length} $l$ from $v_0$ to $v_l$ is a sequence
$\qpathwithverts{v}{l}$, or $\quiverpath{\alpha}{l}$ for brevity, satisfying:
$\alpha_i \in Q_1$ for all $1 \leq i \leq l$;
$\source(\alpha_1) = v_0$;
$\target(\alpha_{i}) = \source(\alpha_{i+1})$ for all $1 \leq i < l$;
and $\target(\alpha_{l}) = v_l$.
If $v \in Q_0$, we denote by $\varepsilon_v$ the \emph{stationary} (or empty) path
$(v\mid\mid v)$ of length $0$ with source and target equal to $v$.

The \emph{path algebra} $\Bbbk Q$ of $Q$ with coefficients in a commutative ring with unit $\Bbbk$
consists of formal $\Bbbk$-linear combinations of paths in $Q$; the product of two
paths is
\begin{align*}
    \quiverpath{\alpha}{r} & \quiverpath{\beta}{s}
    =
    \begin{cases}
        (\alpha_1\cdots\alpha_r\beta_1\cdots\beta_s), & \text{if }
        \target(\alpha_r) = \source(\beta_1), \\
        0, & \text{if } \target(\alpha_r) \neq \source(\beta_1). \\
    \end{cases}
\end{align*}
In particular,
\begin{align}
    \label{vertexproducts}
    \varepsilon_v\varepsilon_u = \delta_{v,u} \varepsilon_v
\end{align}
where $\delta_{u,v} = 1$ if $u=v$ and $\delta_{u,v} = 0$ if $u\neq v$.

\begin{Def}
Fix a total order $\prec$ on $Q_0$ with the property that
$\source(\alpha) \prec \target(\alpha)$ for every arrow
$\alpha$ in $Q_1$.
For a path $\quiverpath{\alpha}{n}$ in $Q$, define
\begin{gather*}
    \ell\quiverpath{\alpha}{n}
    = \sum_{u \succeq \target(\alpha_n)} \varepsilon_u
    + \sum_{i = 1}^n \quiverpathsuffix{\alpha}{i}{n}
\end{gather*}
and let $\BQ$ denote the image of the paths of $Q$ under the map $\ell$:
\begin{align*}
    \BQ = \{ \ell\quiverpath{\alpha}{n} \mid \quiverpath{\alpha}{n}\ \text{is a path of}\ Q\}.
\end{align*}
\end{Def}
That such a total order exists follows from the assumption that $Q$ is acyclic.
Of course, the definition depends on the choice of total order.

\begin{Prop}
    \label{prop:quiverLRBproduct}
    Suppose $\quiverpath{\alpha}{n}$ and $\quiverpath{\beta}{m}$
    are two paths in $Q$.
    \begin{enumerate}
        \item
            If $\target(\alpha_n) \succeq \target(\beta_m)$, then
            \begin{align*}
                \ell\quiverpath{\alpha}{n} \cdot \ell\quiverpath{\beta}{m}
                = \ell\quiverpath{\alpha}{n}.
            \end{align*}
        \item
            If there exists $r$ such that
            $\target(\alpha_n) = \source(\beta_r)$,
            then
            \begin{align*}
                \ell\quiverpath{\alpha}{n} \cdot \ell\quiverpath{\beta}{m}
                = \ell(\alpha_1\cdots\alpha_n \beta_r\cdots\beta_m).
            \end{align*}
        \item
            If there exists $r$ such that
            $\source(\beta_{r-1}) \prec \target(\alpha_n) \prec \source(\beta_r)$,
            then
            \begin{align*}
                \ell\quiverpath{\alpha}{n} \cdot \ell\quiverpath{\beta}{m}
                = \ell\quiverpathsuffix{\beta}{r}{m}.
            \end{align*}
where $\source(\beta_0)$ is understood as smaller than all vertices.
    \end{enumerate}
\end{Prop}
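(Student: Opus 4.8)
The plan is to verify the product formula by direct computation, using the definition of $\ell$ and the multiplication rules for paths recorded in~\eqref{vertexproducts}, together with the fact that $\BQ$ is a left regular band (which one checks separately, or which follows once all three cases are established and one observes that the resulting multiplication on $\BQ$ is associative and idempotent). First I would write out
\[
\ell\quiverpath{\alpha}{n} = \sum_{u \succeq \target(\alpha_n)} \varepsilon_u + \sum_{i=1}^n \quiverpathsuffix{\alpha}{i}{n},
\qquad
\ell\quiverpath{\beta}{m} = \sum_{v \succeq \target(\beta_m)} \varepsilon_v + \sum_{j=1}^m \quiverpathsuffix{\beta}{j}{m},
\]
and expand the product as a sum of four double sums: vertex-times-vertex, vertex-times-suffix, suffix-times-vertex, and suffix-times-suffix. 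The key observation is that multiplying a path $\quiverpathsuffix{\alpha}{i}{n}$ on the right by a vertex $\varepsilon_v$ gives that same path when $v = \target(\alpha_n)$ and $0$ otherwise; multiplying a vertex $\varepsilon_u$ on the right by a path $\quiverpathsuffix{\beta}{j}{m}$ gives that path when $u = \source(\beta_j)$ and $0$ otherwise; and multiplying two paths $\quiverpathsuffix{\alpha}{i}{n} \cdot \quiverpathsuffix{\beta}{j}{m}$ gives the concatenated path precisely when $\target(\alpha_n) = \source(\beta_j)$ and $0$ otherwise.

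Carrying out this bookkeeping, the suffix-times-suffix contributions survive only when $\target(\alpha_n) = \source(\beta_j)$ for some (necessarily unique, since $Q$ is acyclic and $\prec$ is compatible) index $j$, and then sum over $i$ to give $\sum_{i=1}^n (\alpha_i\cdots\alpha_n\beta_j\cdots\beta_m)$. The vertex-times-suffix contributions survive when $u = \source(\beta_j)$ with $u \succeq \target(\alpha_n)$, i.e.\ for all $j$ with $\source(\beta_j) \succeq \target(\alpha_n)$; writing $\source(\beta_j) = \target(\beta_{j-1})$ and using the compatibility of $\prec$ with the arrows, this is a "tail" of the index set $\{1,\dots,m\}$. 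Likewise the suffix-times-vertex contributions survive when $v = \target(\alpha_n)$ and $v \succeq \target(\beta_m)$, i.e.\ only when $\target(\alpha_n) \succeq \target(\beta_m)$, in which case they yield $\sum_{i=1}^n \quiverpathsuffix{\alpha}{i}{n}$. The vertex-times-vertex part yields $\sum_{u} \varepsilon_u$ over $u \succeq \max(\target(\alpha_n), \target(\beta_m))$. Assembling these pieces according to the three mutually exclusive positional relationships between $\target(\alpha_n)$ and the vertex sequence $\target(\beta_m) \preceq \source(\beta_m) = \target(\beta_{m-1}) \preceq \cdots \preceq \source(\beta_1)$ of $\quiverpath{\beta}{m}$ gives exactly the three formulas: in case~(1) everything collapses to $\ell\quiverpath{\alpha}{n}$; in case~(2) the $\alpha$-suffixes and the $\beta_r,\dots,\beta_m$ pieces combine into the suffixes of $(\alpha_1\cdots\alpha_n\beta_r\cdots\beta_m)$ together with the correct vertex tail; in case~(3) the $\alpha$-suffixes cancel against nothing and one is left with the suffixes of $\quiverpathsuffix{\beta}{r}{m}$ and the vertex tail $\sum_{u \succeq \source(\beta_r)}\varepsilon_u$, which is precisely $\ell\quiverpathsuffix{\beta}{r}{m}$.

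The main obstacle, and the step that needs the most care, is the bookkeeping in case~(2): one must check that the vertex terms $\sum_{u \succeq \target(\alpha_n)}\varepsilon_u$ contributed by $\ell\quiverpath{\alpha}{n}$, the surviving vertex-times-suffix terms $\sum_{\{j : \source(\beta_j) \succeq \target(\alpha_n)\}} \quiverpathsuffix{\beta}{j}{m} = \sum_{j \geq r} \quiverpathsuffix{\beta}{j}{m}$, and the $\alpha$-suffix terms $\sum_{i=1}^n (\alpha_i\cdots\alpha_n\beta_r\cdots\beta_m)$ (from suffix-times-suffix, which fires at $j = r$ since $\source(\beta_r) = \target(\alpha_n)$), together reassemble exactly into
\[
\ell(\alpha_1\cdots\alpha_n\beta_r\cdots\beta_m)
= \sum_{u \succeq \target(\beta_m)} \varepsilon_u
+ \sum_{i=1}^n (\alpha_i\cdots\alpha_n\beta_r\cdots\beta_m)
+ \sum_{j=r}^m \quiverpathsuffix{\beta}{j}{m}.
\]
Here one uses that $\target(\alpha_n) = \source(\beta_r) \succ \target(\beta_m)$ forces $\{u : u \succeq \target(\alpha_n)\} \subsetneq \{u : u \succeq \target(\beta_m)\}$, with the difference $\{u : \target(\beta_m) \preceq u \prec \target(\alpha_n)\}$ being supplied by the vertices $\varepsilon_{\target(\beta_j)}$ arising from the vertex-times-vertex sum $\sum_{v \succeq \target(\beta_m)}\varepsilon_v$ of $\ell\quiverpath{\beta}{m}$ that meet $\sum_{u \succeq \target(\alpha_n)}\varepsilon_u$ trivially but contribute via $\varepsilon_u \varepsilon_v = \delta_{u,v}\varepsilon_u$ — in fact it is cleaner to note that the full vertex-times-vertex sum already gives $\sum_{u \succeq \target(\alpha_n)}\varepsilon_u$ and the missing low vertices $\target(\beta_m)\preceq u\prec\target(\alpha_n)$ never appear, which is consistent because in case~(2) these vertices are not $\succeq \target(\beta_m)$-and-$\preceq$ anything relevant... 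Rather than belabor this, I would organize the proof by fixing the position of $t := \target(\alpha_n)$ relative to the chain $\target(\beta_m) \preceq \target(\beta_{m-1}) \preceq \cdots \preceq \source(\beta_1)$ and, in each of the three cases, compute the four double sums and simplify; the identity $\varepsilon_u\varepsilon_v = \delta_{u,v}\varepsilon_v$ and the path-concatenation rule do all the work, and the compatibility $\source(\alpha)\prec\target(\alpha)$ is what guarantees the relevant index sets are intervals (tails) so that the sums telescope correctly into the defining expression for $\ell$ of the product path.
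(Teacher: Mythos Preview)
Your approach is essentially identical to the paper's: expand the product into the four double sums (vertex--vertex, vertex--suffix, suffix--vertex, suffix--suffix), evaluate each using $\varepsilon_u\varepsilon_v=\delta_{u,v}\varepsilon_u$ and the concatenation rule for paths, and then combine the pieces according to the position of $\target(\alpha_n)$ relative to the vertices of $\quiverpath{\beta}{m}$. The paper carries out exactly these four computations and then simply says ``the result now follows by combining these identities.''

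One genuine slip to fix: you have the direction of $\prec$ reversed in a couple of places. The convention is $\source(\alpha)\prec\target(\alpha)$, so the vertex sequence of $\quiverpath{\beta}{m}$ is \emph{increasing}, $\source(\beta_1)\prec\source(\beta_2)\prec\cdots\prec\source(\beta_m)\prec\target(\beta_m)$, not decreasing as you wrote. In particular, in case~(2) one has $\target(\alpha_n)=\source(\beta_r)\prec\target(\beta_m)$ (not $\succ$), so the vertex--vertex sum already yields exactly $\sum_{u\succeq\target(\beta_m)}\varepsilon_u$, the suffix--vertex sum vanishes, and the remaining two sums give precisely the suffix terms of $\ell(\alpha_1\cdots\alpha_n\beta_r\cdots\beta_m)$. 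Once you correct the orientation, the confusion you flagged in case~(2) disappears and the bookkeeping goes through cleanly.
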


\begin{proof}
The product
$\ell\quiverpath{\alpha}{n} \cdot \ell\quiverpath{\beta}{m}$
expands as
\begin{align*}
    & \sum_{u \succeq \target(\alpha_n)} \sum_{v \succeq \target(\beta_m)} \varepsilon_u \varepsilon_v
    + \sum_{i=1}^n \sum_{v \succeq \target(\beta_m)} \quiverpathsuffix{\alpha}{i}{n} \varepsilon_v \\
    &\quad +
      \sum_{j=1}^m \sum_{u \succeq \target(\alpha_n)} \varepsilon_u \quiverpathsuffix{\beta}{j}{m}
      + \sum_{i=1}^n \sum_{j=1}^m \quiverpathsuffix{\alpha}{i}{n} \quiverpathsuffix{\beta}{j}{m}.
\end{align*}
Since $\varepsilon_u\varepsilon_v = \delta_{u,v}\varepsilon_u$,
it follows that
\begin{align*}
    \sum_{u \succeq \target(\alpha_n)} \sum_{v \succeq \target(\beta_m)} \varepsilon_u \varepsilon_v
    =
    \begin{cases}
        \sum_{u \succeq \target(\alpha_n)} \varepsilon_u,
            & \text{if } \target(\alpha_n) \succeq \target(\beta_m), \\
        \sum_{v \succeq \target(\beta_m)} \varepsilon_v,
            & \text{if } \target(\beta_m) \succeq \target(\alpha_n).
    \end{cases}
\end{align*}
Since
$\quiverpathsuffix{\alpha}{i}{n} \varepsilon_v
= \delta_{v, \target(\alpha_n)} \quiverpathsuffix{\alpha}{i}{n}$,
\begin{align*}
    \sum_{i=1}^n \sum_{v \succeq \target(\beta_m)}
        \quiverpathsuffix{\alpha}{i}{n} \varepsilon_v
    =
    \begin{cases}
        \sum_{i=1}^n \quiverpathsuffix{\alpha}{i}{n},
            & \text{if } \target(\alpha_n) \succeq \target(\beta_m), \\
        0,  & \text{if } \target(\alpha_n) \not\succeq \target(\beta_m).
    \end{cases}
\end{align*}
Since
$\varepsilon_u \quiverpathsuffix{\beta}{j}{m}
= \delta_{u, \source(\beta_j)} \quiverpathsuffix{\beta}{j}{m}$,
\begin{align*}
    \sum_{j=1}^m \sum_{u \succeq \target(\alpha_n)} \varepsilon_u \quiverpathsuffix{\beta}{j}{m}
    =
    \begin{cases}
        0, &
        \text{if } \source(\beta_j) \not\succeq \target(\alpha_n)
        \text{ for all } j, \\
        \sum_{j=r}^m \quiverpathsuffix{\beta}{j}{m},
        &
        \text{if } \source(\beta_{r-1}) \prec \target(\alpha_n) \preceq \source(\beta_r).
    \end{cases}
\end{align*}
Since
$\quiverpathsuffix{\alpha}{i}{n} \quiverpathsuffix{\beta}{j}{m} = 0$
unless $\target(\alpha_n) = \source(\beta_j)$ for some $1 \leq j \leq m$,
\begin{align*}
    & \sum_{i=1}^n \sum_{j=1}^m \quiverpathsuffix{\alpha}{i}{n} \quiverpathsuffix{\beta}{j}{m} \\
    & \qquad =
    \begin{cases}
        0, &
            \text{if } \target(\alpha_n) \neq \source(\beta_j) \text{ for all } j, \\
        \sum_{i=1}^n \left(\alpha_i\cdots\alpha_n \beta_r\cdots\beta_m\right), &
            \text{if } \target(\alpha_n) = \source(\beta_r) \text{ for some } r.
    \end{cases}
\end{align*}
The result now follows by combining these identities.
\end{proof}

\begin{Thm}
$\BQ$ is a left regular band and $\Bbbk\BQ = \Bbbk Q$.
\end{Thm}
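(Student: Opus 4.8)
The plan is to verify the two assertions in sequence, using Proposition~\ref{prop:quiverLRBproduct} as the computational engine for the first and a dimension count for the second.

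First I would show that $\BQ$ is a left regular band. Since $\BQ$ is defined as a subset of the path algebra $\Bbbk Q$ (under addition), the key point is that it is closed under the multiplication of $\Bbbk Q$; Proposition~\ref{prop:quiverLRBproduct} does exactly this, giving in all three cases an explicit element $\ell(\text{path})$ or $\ell(\varepsilon_v)=\sum_{u\succeq v}\varepsilon_u$ of $\BQ$ as the product. (One should note that $\sum_{u \succeq v} \varepsilon_u = \ell(\varepsilon_v)$ when $v$ is viewed as the stationary path, so case (1) with a stationary path on the right stays in $\BQ$; and that the three cases of the Proposition are exhaustive, since given the targets $\target(\alpha_n)$ and the source vertices $\source(\beta_1) \prec \cdots$, either $\target(\alpha_n)$ is weakly above $\target(\beta_m)$, or it equals some $\source(\beta_r)$, or it falls strictly between two consecutive source vertices.) The identity element is $\sum_{v \in Q_0} \varepsilon_v = \ell(\varepsilon_{v_0})$ for $v_0$ the $\prec$-minimum vertex, by~\eqref{vertexproducts}. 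For the band identity $x^2 = x$: applying case (1) with the path equal to itself (its target is weakly $\succeq$ its own target) gives $\ell(p)\cdot\ell(p) = \ell(p)$. For left regularity $xyx = xy$: writing $x = \ell(p)$, $y = \ell(q)$, one computes $xy$ via the appropriate case of the Proposition — in each case $xy$ is $\ell$ of a path whose target is $\succeq \target(p)$ (or is $\ell(\varepsilon_v)$ with $v \succeq \target(p)$) — and then $(xy)x$ falls under case (1) of the Proposition (target of $xy$ weakly above target of $x = p$), yielding $(xy)x = xy$. I would organize this as a short case analysis; alternatively, once $\BQ$ is a band, one can invoke Proposition~\ref{DefLRB2} by exhibiting the support lattice, but the direct check via $x^2=x$ and $xyx=xy$ is cleaner here.

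For the isomorphism $\Bbbk\BQ = \Bbbk Q$: the map $\ell$ is a bijection from the set of paths of $Q$ onto $\BQ$ (injectivity is clear since the "leading term" of $\ell\quiverpath{\alpha}{n}$ — the summand $(\alpha_1\cdots\alpha_n)$ itself, which is the unique longest path appearing — recovers the path; surjectivity is the definition of $\BQ$). Hence $|\BQ|$ equals the number of paths of $Q$, which is $\dim_\Bbbk \Bbbk Q$. Now consider the $\Bbbk$-linear extension $\Phi\colon \Bbbk\BQ \to \Bbbk Q$ of the inclusion $\BQ \hookrightarrow \Bbbk Q$. Proposition~\ref{prop:quiverLRBproduct} shows $\BQ$ is closed under the product of $\Bbbk Q$, so $\Phi$ is an algebra homomorphism. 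The two algebras have the same finite dimension (namely the number of paths), so it suffices to prove $\Phi$ is injective, i.e.\ that $\BQ$ is a $\Bbbk$-linearly independent subset of $\Bbbk Q$. For this I would order the paths of $Q$ by a linear refinement of length (longest first), and observe that $\ell(p) = (\text{the path } p) + (\text{a sum of strictly shorter paths})$; thus the "leading terms" of the elements of $\BQ$ are the distinct path basis vectors of $\Bbbk Q$, so $\BQ$ is unitriangular with respect to the path basis and hence linearly independent. Therefore $\Phi$ is a bijective algebra homomorphism, i.e.\ an isomorphism $\Bbbk\BQ \cong \Bbbk Q$; and since it is literally the linear extension of an inclusion, we may write $\Bbbk\BQ = \Bbbk Q$.

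The main obstacle is the bookkeeping in the left-regularity verification: one must carefully track, across the three cases of Proposition~\ref{prop:quiverLRBproduct}, that the target of the path underlying $xy$ is always $\succeq \target(p)$ (where $x = \ell(p)$), so that $(xy)x$ is governed by case (1). In case (2), $xy = \ell(\alpha_1\cdots\alpha_n\beta_r\cdots\beta_m)$ has target $\target(\beta_m) = \target(q)$, and one must check $\target(q) \succeq \target(p)$ holds in that case — indeed it does, since case (2) requires $\target(\alpha_n) = \source(\beta_r) \prec \target(\beta_r) \preceq \target(\beta_m)$ using that $\prec$ strictly increases along arrows. In case (3), $xy = \ell(\beta_r\cdots\beta_m)$, with source $\source(\beta_r) \succ \target(\alpha_n) = \target(p)$, so again case (1) applies to $(xy)x$. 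Case (1) for $xy$ itself is immediate. Once this is checked, everything else is routine.
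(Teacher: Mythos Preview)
Your proof is essentially correct, but it diverges from the paper's in the first half and has a small imprecision in the second.

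For the left regular band property, the paper takes a different route: rather than verifying $x^2=x$ and $xyx=xy$ directly, it invokes the support-lattice characterization (Proposition~\ref{DefLRB2}). It sets $\Lambda=(Q_0,\prec)^{op}$, defines $\sigma\colon \BQ\to\Lambda$ by $\sigma\big(\ell\quiverpath{\alpha}{n}\big)=\target(\alpha_n)$, and reads off from Proposition~\ref{prop:quiverLRBproduct} both that $\sigma(xy)=\sigma(x)\wedge\sigma(y)$ and that $xy=x$ iff $\sigma(x)\leq\sigma(y)$ in $\Lambda$. This is shorter than your case-by-case verification of $xyx=xy$ and has the bonus of identifying the support lattice along the way. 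Your direct approach is perfectly valid and more elementary in that it does not appeal to Proposition~\ref{DefLRB2}; the trade-off is the bookkeeping you correctly anticipate in tracking targets through the three cases.

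For the equality $\Bbbk\BQ=\Bbbk Q$, both you and the paper argue that $\BQ$ is a basis via a unitriangularity argument. The paper orders paths so that every proper suffix of a path is larger than the path itself, and orders stationary paths by $\prec$; then $\ell(p)$ has $p$ as its diagonal entry. Your ordering by length works for paths of positive length, but your claim that ``$\ell(p)=p+\text{(strictly shorter paths)}$'' breaks down when $p=\varepsilon_v$ is stationary: then $\ell(\varepsilon_v)=\sum_{u\succeq v}\varepsilon_u$ is a sum of length-zero paths, none strictly shorter. You need to refine the order on the stationary paths (e.g.\ by $\prec$, exactly as the paper does) to make the triangularity go through. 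Once patched, your argument and the paper's are the same.
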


\begin{proof}
    We prove $\BQ$ is a left regular band using Proposition \ref{DefLRB2}.
    Let $\Lambda = (Q_0, \prec)^{op}$ denote the lattice with elements $Q_0$
    ordered by the opposite of $\prec$. Define a map $\sigma\colon \BQ \to
    \Lambda$ by $\sigma\quiverpath{\alpha}{n} = \target(\alpha_n)$. By
    Proposition \ref{prop:quiverLRBproduct},
    $\sigma(xy) = \sigma(x)\wedge\sigma(y)$,
    where $\wedge$ denotes the meet operator of $\Lambda$,
    and
    $xy = x$
    iff
    $\sigma(x) \succeq \sigma(y)$.
    Hence, the conditions of Proposition \ref{DefLRB2} are satisfied.  Note that $\BQ$ is a monoid with identity the stationary path at the smallest object of $Q_0$.

It is easy to see that $\BQ$ is a basis for $\Bbbk Q$. Indeed, if we order the basis of paths so that each suffix of a path is larger than the path itself and so that stationary paths of vertices are ordered using $\prec$ in the obvious way, then the map sending a path $\quiverpath{\alpha}{n}$ to $\ell\quiverpath{\alpha}{n}$ has an upper triangular matrix with ones along the diagonal.
\end{proof}

Note that $\BQ$ is almost never a geometric left regular band.

%
%

\subsection{Idempotent inner derivations}

Let $A$ be an associative algebra over a field $\Bbbk$. For an element $a \in A$,
let $\partial_a(x) = xa - ax$ denote the associated inner derivation.
The following was noticed by Lawvere in his work on graphical toposes (see below).

\begin{Prop}[Lawvere~\cite{Lawvere}]
    Let $A$ be an associative algebra over a field $\Bbbk$ of characteristic
    different from $2$. The set of idempotent elements of $A$ for which the
    associated inner derivation is also idempotent
    \[\left\{a \in A \mid a^2 = a\ \text{and}\ \partial_a^2 = \partial_a \right\}\]
    is a left regular band.
\end{Prop}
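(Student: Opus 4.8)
The plan is to show that $B$, the set $\{a\in A \mid a^2=a \text{ and } \partial_a^2=\partial_a\}$, is a left regular band under the multiplication inherited from $A$. The first step is to rewrite the condition on the inner derivation in an elementary form. If $a^2=a$, a direct expansion gives $\partial_a^2(x)=xa+ax-2axa$ for every $x\in A$, so the equality $\partial_a^2=\partial_a$ amounts to $2ax=2axa$ for all $x$; since the base field has characteristic different from $2$, this is equivalent to the identity
\begin{equation}\label{eq:lawvere}
  ax = axa \qquad\text{for all } x\in A.
\end{equation}
Thus $B$ is precisely the set of idempotents $a$ of $A$ satisfying \eqref{eq:lawvere} (equivalently $aA\subseteq Aa$). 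This is exactly where the characteristic hypothesis enters: in characteristic $2$ every idempotent satisfies $\partial_a^2=\partial_a$, and the full set of idempotents of $A$ is not closed under multiplication in general, so the condition \eqref{eq:lawvere} is doing real work.

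The second step disposes of the formal parts. Clearly $1\in B$, and the band identity $a^2=a$ is built into the definition of $B$. For left regularity, note that if $a,b\in B$ then instantiating \eqref{eq:lawvere} for $a$ at $x=b$ gives $ab=aba$; hence $aba=ab$ holds for all $b\in A$, in particular for all $b\in B$. Consequently, once we know $B$ is closed under the multiplication of $A$, it is automatically a submonoid that is a band and satisfies $aba=ab$, i.e. a left regular band. So the entire content of the proposition reduces to the closure of $B$ under multiplication.

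This closure check is the step I expect to require the only genuine computation. Let $a,b\in B$. First, $(ab)^2=(ab)(ab)=(aba)b=(ab)b=ab$, using $aba=ab$ and $b^2=b$, so $ab$ is idempotent. Next one verifies \eqref{eq:lawvere} for $ab$: given $x\in A$, apply \eqref{eq:lawvere} for $b$ at $xa$ to get $bxa=bxab$, and left multiply by $a$ to obtain $abxa=abxab$; then apply \eqref{eq:lawvere} for $a$ at $bx$ to get $abx=abxa$. Combining these, $(ab)x(ab)=abxab=abxa=(ab)x$, so $ab\in B$. This establishes closure, and with the observations of the previous paragraph it completes the proof. (One could also express \eqref{eq:lawvere} as the purely semigroup-theoretic condition $aA\subseteq Aa$, which makes the ``left regular'' flavour transparent, but the pointwise identity is all that the verification needs.)
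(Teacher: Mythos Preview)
Your proof is correct. The computation $\partial_a^2(x)=xa+ax-2axa$ for an idempotent $a$ is right, and from it the equivalence of $\partial_a^2=\partial_a$ with the identity $axa=ax$ (for all $x\in A$) over a field of characteristic $\neq 2$ follows as you say. The closure check is clean: $(ab)^2=(aba)b=(ab)b=ab$, and the chain $abx=abxa=abxab$ (the first from $a(bx)a=a(bx)$, the second from $a\cdot b(xa)b=a\cdot b(xa)$) gives $(ab)x=(ab)x(ab)$, so $ab\in B$.

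There is nothing to compare against: the paper states this proposition with attribution to Lawvere and gives no proof of its own. Your argument supplies exactly the elementary verification one would want here.
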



\subsection{Graphical topos}

Left regular bands have also appeared in topos theory. In \cite{graphic1,moregraphic}, Lawvere
introduced a class of toposes called graphical. A \emph{graphical topos} is a
topos which is generated by objects whose endomorphism monoid is a finite left
regular band. The topos of $B$-sets, where $B$ is a finite left regular band,
is a graphical topos. He used a different name for left regular bands: he
called them \emph{graphical monoids} instead; and he named the identity
$xyx=xy$ the \emph{Sch\"utzenberger-Kimura identity}.  The reason for the terminology `graphical' is that if $B=\{0,+,-\}$, then the category of $B$-sets can be identified with the category of directed graphs with morphisms that are allowed to collapse an edge to a vertex.

\section{Applications of the Main Result}
\label{s:applications}

Since the proof of the main result is rather involved, we defer it to
Sections \ref{s:mainresult} and \ref{s:MonoidCohomology}. This section is
devoted to applications of the main result. We begin by stating the main result. Fix a finite
left regular band $B$ and a commutative ring with unit $\Bbbk$.

\subsection{Statement of the main result}

Let $X\in \Lambda(B)$. Let $\Bbbk_X$ denote the ring $\Bbbk$ viewed as a $\Bbbk B$-module
via the action
\begin{align*}
    b \cdot \alpha =
    \begin{cases}
        \alpha, & \text{if}\ \sigma(b)\geq X,\\
        0, & \text{otherwise},
    \end{cases}
\end{align*}
for all $b \in B$ and $\alpha \in \Bbbk$. These are the simple $\Bbbk B$-modules when
$\Bbbk$ is a field (Section \ref{ss:mainresult}). The main result of this paper is
a computation of $\Ext^n_{\Bbbk B}(\Bbbk_X,\Bbbk_Y)$ for all $n\geq 0$ and all $X,Y \in
\Lambda(B)$. It turns out that this coincides with the cohomology of a
simplicial complex associated to the interval subsemigroup $B[X,Y)$ of $B$
(Section \ref{ss:intervalsubmonoids}). Recall that $B[X,Y)$ is a poset with
respect to Green's $\R$-order: $a\leq b$ if $ba=a$ (see Section
\ref{ss:Rorder}). Let $\Delta(X,Y)$ be its order complex, which is the
simplical complex whose vertex set is $B[X,Y)$ and whose simplices are the
finite chains in $B[X,Y)$.  Our main result is the following theorem.

\begin{Thm}
\label{mainresult}
Let $B$ be a finite left regular band and let $\Bbbk$ be a commutative ring with
unit.  Let $X,Y\in \Lambda(B)$.  Then
\begin{align*}
    \Ext^n_{\Bbbk B}(\Bbbk_X,\Bbbk_Y) =
    \begin{cases}
        \til H^{n-1}(\Delta(X,Y),\Bbbk), & \text{if}\ X<Y,\ n\geq 1,\\
        \Bbbk, &\text{if}\ X=Y,\ n=0,\\
        0,  & \text{otherwise.}
    \end{cases}
\end{align*}
\end{Thm}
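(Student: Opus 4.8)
The plan is to reduce the computation of $\Ext^n_{\Bbbk B}(\Bbbk_X,\Bbbk_Y)$ to a statement about monoid cohomology and the cohomology of classifying spaces, and then to identify that cohomology with the (reduced) simplicial cohomology of the order complex $\Delta(X,Y)$. First I would construct an explicit projective resolution of $\Bbbk_X$ as a $\Bbbk B$-module. The natural candidate is a bar-type or simplicial resolution built from the free left regular band structure: using the complete set of orthogonal idempotents reviewed in Section~\ref{s:mainresult}, one identifies the Sch\"utzenberger (left) modules of $B$ as indecomposable projectives, and then assembles a resolution of $\Bbbk_X$ whose terms are direct sums of these projectives indexed by chains in the $\R$-order (equivalently, by chains in the support lattice refined by Lemma~\ref{Rhodeselem}). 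The upshot is that $\Ext^n_{\Bbbk B}(\Bbbk_X,\Bbbk_Y)$ becomes the cohomology of a cochain complex that is naturally interpreted as the cohomology of a small category (a monoid, or more precisely a Grothendieck construction / category of elements) with coefficients in an appropriate module.

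\textbf{Key steps, in order.}
(1) Observe that since $\Bbbk_Y$ is concentrated on $B_{\geq Y}$ and annihilated by $\nup Y$, the $\Hom$ computations only see the interval submonoid $B[X,Y]$; restriction along the retractions $\tau_a$ and the structure of local/interval submonoids from Section~\ref{ss:intervalsubmonoids} lets me replace $B$ by $B[X,Y]$ and $\Bbbk_X$ by the module supported at the bottom, reducing to the case where we are computing $\Ext$ from the ``bottom'' simple to the ``top'' simple of $B[X,Y]$. (2) Build the projective resolution of $\Bbbk_X$ over $\Bbbk B[X,Y]$; applying $\Hom_{\Bbbk B[X,Y]}(-,\Bbbk_Y)$ collapses the projective terms to one-dimensional pieces indexed by chains in $B[X,Y)$, so the resulting cochain complex is (up to a degree shift by one) the augmented simplicial cochain complex of $\Delta(X,Y)$. (3) Invoke Quillen's Theorem~A and the machinery of classifying spaces of small categories (Section~\ref{s:MonoidCohomology}) to show that the relevant category is homotopy equivalent to the order complex $\Delta(X,Y)$, so its cohomology is $\til H^{\ast}(\Delta(X,Y),\Bbbk)$. (4) Track the degree shift carefully: an $n$-cochain corresponds to an $(n-1)$-chain in $B[X,Y)$, which gives $\Ext^n = \til H^{n-1}(\Delta(X,Y),\Bbbk)$ for $n\geq 1$ when $X<Y$; and the cases $X=Y$ (where $\Bbbk_X=\Bbbk_Y$ is projective, giving $\Ext^0=\Bbbk$ and $\Ext^n=0$) and $X\not\leq Y$ (where $\Hom(\Bbbk_X,\Bbbk_Y)=0$ already at the level of projectives, since no projective appearing in the resolution of $\Bbbk_X$ maps nontrivially to $\Bbbk_Y$) are handled directly.

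\textbf{Main obstacle.}
The hard part will be Step~(3): establishing that the combinatorial cochain complex arising from the resolution actually computes the cohomology of $\Delta(X,Y)$, rather than something larger. The resolution one writes down first is typically not minimal, and its terms are indexed by all chains in $B[X,Y)$ (with multiplicities coming from the fibers of $\sigma$), not by chains in a single poset whose nerve is obviously $\Delta(X,Y)$; one must show these extra contributions cancel. This is exactly where Quillen's Theorem~A enters — one exhibits a functor between the relevant small categories whose comma categories (fibers) are contractible, forcing a homotopy equivalence of classifying spaces and hence an isomorphism on cohomology. Managing the bookkeeping of this functor (likely $\sigma$ or a section thereof, together with the retractions $\tau_a$) and verifying the contractibility of the fibers — using that each $a^{\uparrow}$ and each local submonoid $aB$ has a zero or identity making it conically contractible — is the technical crux, and is presumably why the authors split the proof across two full sections.
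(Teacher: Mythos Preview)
Your high-level architecture is right --- reduce to monoid cohomology of a submonoid, then pass to a classifying space and invoke Quillen's Theorem~A --- but two concrete steps are wrong or incomplete.

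First, in Step~(4) you assert that when $X=Y$ the module $\Bbbk_X$ is projective. It is not: its projective cover is the Sch\"utzenberger module $\Bbbk L_X$, which has $\Bbbk$-rank $|L_X|$, typically larger than one. The paper handles $X=Y$ differently: after reducing to $B_{\geq X}$ (not $B[X,Y]$ --- the correct reduction is via Corollary~\ref{computeExt}, since $\Bbbk_X$ and $\Bbbk_Y$ are both $\Bbbk B_{\geq X}$-modules), one observes that any element of support $X$ is a left zero of $B_{\geq X}$, so $\mathcal B(B_{\geq X})$ is contractible and $H^n(B_{\geq X},\Bbbk_X)=H^n(\mathrm{pt},\Bbbk)$.

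Second, and more seriously, Step~(2) does not work as stated. If you take the bar resolution of $\Bbbk_X$ over $\Bbbk B_{\geq X}$ and apply $\Hom(-,\Bbbk_Y)$, the resulting cochain complex is indexed by \emph{tuples} in $(B_{\geq X}\setminus\{1\})^n$, not by chains in $B[X,Y)$; there is no direct ``collapse to chains''. The paper's mechanism for both the passage to $\Delta(X,Y)$ and the degree shift is a \emph{dimension-shifting} argument: one embeds $\Bbbk_Y$ in the coinduced module $\Bbbk^{eB_{\geq X}}$ (with $\sigma(e)=Y$), which is acyclic by an Eckmann--Shapiro argument, and the quotient is $\Bbbk^R$ with $R=eB_{\geq X}\setminus\{e\}$. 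The long exact sequence then gives $H^{n+1}(B_{\geq X},\Bbbk_Y)\cong \til H^n(B_{\geq X},\Bbbk^R)$. Now $\Bbbk^R$ is not a trivial module, so one identifies $H^\bullet(B_{\geq X},\Bbbk^R)$ with $H^\bullet(\mathcal B(R\rtimes B_{\geq X}),\Bbbk)$ via the Grothendieck construction, and \emph{this} is where Quillen's Theorem~A applies: the functor $R\rtimes B_{\geq X}\to R/{\R}$ has fibers that are projective cyclic $B_{\geq X}$-sets (since left regular bands are right PP), hence contractible. The order complex of $R/{\R}$ is exactly $\Delta(X,Y)$. Your Step~(3) anticipated Quillen~A and contractible fibers, but you need the short exact sequence and the semidirect product to get there; that is the missing idea.
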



\subsection{The quiver of a left regular band}
\label{ss:quiverofaLRB}
Assume for the moment that $\Bbbk$ is a field. Recall that a finite dimensional $\Bbbk$-algebra $A$ is \emph{split basic} if each of its simple modules is one-dimensonal.  It is well known that $\sigma\colon
\Bbbk B\to \Bbbk\Lambda(B)$ is the semisimple quotient and $\Bbbk\Lambda(B)\cong
\Bbbk^{\Lambda(B)}$, \textit{cf}.~\cite{Brown1}.  Thus $\Bbbk B$ is a split basic
$\Bbbk$-algebra. The (Gabriel) \emph{quiver} $Q(A)$ of a unital split basic $\Bbbk$-algebra $A$
is the directed graph with vertices the isomorphism classes of simple
$A$-modules and with the number of edges from $S_1$ to $S_2$ equal to $\dim_{\Bbbk}
\Ext^1_A(S_1,S_2)$.

The second author computed the quiver of a left regular band algebra in~\cite{Saliola}.
We give a new description here, using Theorem~\ref{mainresult}, which is more
conceptual and therefore sometimes easier to apply.

\begin{Thm}\label{quiver}
Let $B$ be a finite left regular band and $\Bbbk$ a field.  Then the quiver $Q(\Bbbk B)$
has vertex set $\Lambda(B)$.  The number of arrows from $X$ to $Y$ is zero
unless $X<Y$, in which case it is one less than the number of connected
components of $\Delta(X,Y)$.
\end{Thm}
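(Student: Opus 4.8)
The plan is to read the statement off directly from Theorem~\ref{mainresult}. Since $\Bbbk$ is a field, recall from Section~\ref{ss:mainresult} that the modules $\Bbbk_X$, with $X$ ranging over $\Lambda(B)$, form a complete set of pairwise non-isomorphic simple $\Bbbk B$-modules; indeed the $B$-action on $\Bbbk_X$ is determined by, and determines, the submonoid $B_{\geq X}$, hence $X$ itself (as $\sigma$ is onto $\Lambda(B)$). Consequently the vertex set of the Gabriel quiver $Q(\Bbbk B)$ is $\Lambda(B)$, and by the definition of the quiver recalled above, the number of arrows from $X$ to $Y$ is $\dim_{\Bbbk}\Ext^1_{\Bbbk B}(\Bbbk_X,\Bbbk_Y)$.

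First I would specialize Theorem~\ref{mainresult} to $n=1$. It gives $\Ext^1_{\Bbbk B}(\Bbbk_X,\Bbbk_Y)=0$ unless $X<Y$ --- note that the case $X=Y$ falls under ``otherwise'' once $n\geq 1$, as do the incomparable case and $X>Y$ --- whereas $\Ext^1_{\Bbbk B}(\Bbbk_X,\Bbbk_Y)=\til H^0(\Delta(X,Y),\Bbbk)$ when $X<Y$. So there are no arrows from $X$ to $Y$ unless $X<Y$, and in that case the number of arrows is $\dim_{\Bbbk}\til H^0(\Delta(X,Y),\Bbbk)$.

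It then remains to identify this dimension with the number of connected components of $\Delta(X,Y)$ minus one. For any non-empty simplicial complex $K$ with vertex set $K_0$, the reduced cochain complex identifies $\til H^0(K,\Bbbk)$ with the quotient of the space of functions in $\Bbbk^{K_0}$ that are constant on each connected component of $K$ by the subspace of globally constant functions, and this quotient has dimension $c(K)-1$, where $c(K)$ denotes the number of connected components of $K$. Taking $K=\Delta(X,Y)$ then yields the claimed count, provided $\Delta(X,Y)\neq\emptyset$ when $X<Y$. But in that case the support map of the interval submonoid $B[X,Y]$ maps onto the interval $[X,Y]$, so there is an element $b\in B[X,Y]$ with $\sigma(b)=X$; since the identity of $B[X,Y]$ is the unique element whose support is the top $Y$ of $[X,Y]$, we have $b\in B[X,Y)$, so $b$ is a vertex of $\Delta(X,Y)$. (Alternatively, one may invoke Lemma~\ref{Rhodeselem}(2).)

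I do not anticipate a genuine obstacle: Theorem~\ref{quiver} is just the degree-one part of Theorem~\ref{mainresult} combined with the elementary fact that $\dim_{\Bbbk}\til H^0$ counts connected components minus one. The only step that deserves a word of care is the non-emptiness of $\Delta(X,Y)$ for $X<Y$, which is disposed of above; everything else is bookkeeping with the cases of Theorem~\ref{mainresult}.
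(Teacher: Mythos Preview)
Your proposal is correct and matches the paper's approach: the paper does not spell out a proof of Theorem~\ref{quiver} but simply presents it as the $n=1$ specialization of Theorem~\ref{mainresult}, which is exactly what you do. Your care about the non-emptiness of $\Delta(X,Y)$ is a nice touch that the paper leaves implicit.
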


It is not hard to see that the number of connected components of the order
complex $\Delta(P)$ of a poset $P$ coincides with the number of equivalence
classes of the equivalence relation on $P$ generated by the partial order, or equivalently, with the number of components of the Hasse diagram of $P$.
From this observation, it is straightforward to verify that our description of
the quiver $Q(\Bbbk B)$ coincides with the one in~\cite{Saliola}.

Let $A$ be a split basic algebra with an acyclic quiver $Q$.
A result of Bongartz~\cite{Bongartz} says that if $S_1,S_2$ are the simple $A$-modules corresponding to vertices $v_1$ and $v_2$ of $Q$, then the number of relations involving paths from $v_1$ to $v_2$ in a minimal quiver presentation of $A$ is the dimension of $\Ext^2_A(S_1,S_2)$.  Thus Theorem~\ref{mainresult} admits the following corollary.

\begin{Cor}
If $X<Y$ in $\Lambda(B)$, then the number of relations involving paths from $X$ to $Y$ in a minimal quiver presentation of $\Bbbk B$ is given by $\dim_{\Bbbk} \til H^1(\Delta(X,Y),\Bbbk)$.
\end{Cor}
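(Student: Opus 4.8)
The plan is to combine Theorem~\ref{mainresult} with the result of Bongartz~\cite{Bongartz} quoted immediately above the statement. First I would record the structural facts we need about $\Bbbk B$: it is a split basic finite-dimensional $\Bbbk$-algebra (noted in Section~\ref{ss:quiverofaLRB}), so by Gabriel's theorem $\Bbbk B \cong \Bbbk Q(\Bbbk B)/I$ for an admissible ideal $I$; and its quiver $Q(\Bbbk B)$ is acyclic. Acyclicity is immediate from Theorem~\ref{quiver}: every arrow of $Q(\Bbbk B)$ goes from a vertex $X$ to a vertex $Y$ with $X<Y$ in $\Lambda(B)$, so there can be no directed cycles. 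This places us exactly in the hypotheses under which Bongartz's theorem is stated.

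Next, let $S_1 = \Bbbk_X$ and $S_2 = \Bbbk_Y$ be the simple $\Bbbk B$-modules attached to the vertices $X$ and $Y$ of $Q(\Bbbk B)$ (the identification of the simple indexed by a vertex with $\Bbbk_X$ is recorded in Section~\ref{ss:mainresult}). Bongartz's theorem then asserts that the number of relations involving paths from $X$ to $Y$ in a minimal quiver presentation of $\Bbbk B$ equals $\dim_{\Bbbk}\Ext^2_{\Bbbk B}(\Bbbk_X,\Bbbk_Y)$. Finally, applying Theorem~\ref{mainresult} with $n=2$ and the hypothesis $X<Y$ gives $\Ext^2_{\Bbbk B}(\Bbbk_X,\Bbbk_Y) \cong \til H^1(\Delta(X,Y),\Bbbk)$; taking dimensions yields the claimed equality.

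There is essentially no obstacle: the statement is a formal consequence of the main theorem together with a known structural result on minimal presentations. The only point demanding a moment's care—and the only place a reader might balk—is checking that the hypotheses of Bongartz's result genuinely hold for $\Bbbk B$, i.e.\ that $\Bbbk B$ is split basic with an acyclic quiver; both facts have already been established (the former in Section~\ref{ss:quiverofaLRB}, the latter via Theorem~\ref{quiver}), so the corollary follows at once.
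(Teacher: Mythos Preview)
Your proposal is correct and matches the paper's approach exactly: the paper presents this corollary as an immediate consequence of Bongartz's result combined with Theorem~\ref{mainresult}, and your write-up simply makes explicit the verification of the hypotheses (split basic, acyclic quiver) that the paper records in the surrounding text.
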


\subsection{Global dimension of left regular band algebras}
Let us next apply Theorem~\ref{mainresult} to global dimension.
For a detailed discussion of global dimension the reader is referred
to~\cite{CartanEilenberg,assem,benson}.

The notion of global dimension can be formulated in terms of either left modules or
right modules, but it is well known that these two formulations coincide for a
finite dimensional algebra $A$ over a field $\Bbbk$. Thus, it suffices to define
the notion for left modules.

The \emph{global dimension} $\mathrm{gl.}\dim A$ of a finite dimensional
algebra $A$ over a field $\Bbbk$ is the least $n$ such that $\Ext^{n+1}_A(V,W)=0$
for all finite dimensional $A$-modules $V,W$.
A simple induction on the length of a composition series and application of the
long exact sequence for $\Ext$ shows that $\mathrm{gl.}\dim A$ is the least $n$
such that $\Ext_A^{n+1}(S_1,S_2)=0$ for all simple $A$-modules
$S_1,S_2$, or equivalently the least $n$ such that $\Ext_A^{m}(S_1,S_2)=0$ for all $m>n$ and all simple $A$-modules
$S_1,S_2$~\cite{assem,benson,AuslanderReiten}.

An algebra $A$ has global dimension zero if and only if it is semisimple.
A finite dimensional algebra $A$ is said to be \emph{hereditary} if
$\mathrm{gl.}\dim A\leq 1$.  This is equivalent to the property that each
submodule of a projective $A$-module is projective, as well as to the property
that each left ideal of $A$ is a projective module.  It follows from Gabriel's
theorem that a split basic $\Bbbk$-algebra (such as the algebra of a finite left
regular band) is hereditary if and only if its quiver is acyclic and it is
isomorphic to the path algebra of its
quiver~\cite{assem,benson,AuslanderReiten}.

Let $B$ be a finite left regular band. Nico's results~\cite{Nico1,Nico2} on
global dimension of the algebra of a von Neumann regular semigroup imply that
the global dimension of $\Bbbk B$ is finite and bounded by the length of the longest
chain in $\Lambda(B)$ where the length of a chain $C$ in a poset is $|C|-1$.
This can also be deduced from the theory of quasi-hereditary
algebras~\cite{quasihered} because $\Bbbk B$ is a directed quasi-hereditary algebra
with weight poset the opposite of $\Lambda(B)$~\cite{Putcharep3,rrbg}.  We
easily recover Nico's result, restricted to left regular bands, using
Theorem~\ref{mainresult}.

\begin{Thm}\label{globaldimension}
Let $B$ be a finite left regular band and $\Bbbk$ a field.  Then
\begin{align*}
    \mathrm{gl.}\dim \Bbbk B = \min \{n\mid \til H^n(\Delta(X,Y),\Bbbk)=0,\
                            \text{for all}\ X<Y\in\Lambda(B)\}.
\end{align*}
In particular, one has
\begin{align*}
    \mathrm{gl.}\dim \Bbbk B\leq m
\end{align*}
where $m$ is the length of
the longest chain in $\Lambda(B)$.
\end{Thm}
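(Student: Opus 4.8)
The plan is to deduce everything from Theorem~\ref{mainresult} together with the homological characterization of global dimension recalled just before the statement. First I would observe that, since $\Bbbk B$ is split basic with simple modules precisely $\{\Bbbk_X \mid X\in\Lambda(B)\}$, the global dimension $\mathrm{gl.}\dim \Bbbk B$ is the least $n$ for which $\Ext^{n+1}_{\Bbbk B}(\Bbbk_X,\Bbbk_Y)=0$ for all $X,Y\in\Lambda(B)$. By Theorem~\ref{mainresult}, for $n\geq 0$ we have $\Ext^{n+1}_{\Bbbk B}(\Bbbk_X,\Bbbk_Y)=\til H^{n}(\Delta(X,Y),\Bbbk)$ when $X<Y$, and $\Ext^{n+1}_{\Bbbk B}(\Bbbk_X,\Bbbk_Y)=0$ in every other case: the exceptional value $\Bbbk$ of Theorem~\ref{mainresult} only occurs in cohomological degree $0$, and here the degree $n+1$ is at least $1$, so it never intervenes. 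Hence $\Ext^{n+1}_{\Bbbk B}(\Bbbk_X,\Bbbk_Y)=0$ for all $X,Y$ if and only if $\til H^{n}(\Delta(X,Y),\Bbbk)=0$ for all $X<Y$, which is exactly the displayed equality.

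For the bound by the chain length $m$ it then suffices to show $\til H^{m}(\Delta(X,Y),\Bbbk)=0$ for all $X<Y$, and for this I would bound $\dim\Delta(X,Y)$. Write $M=B[X,Y]$; it is a left regular band whose support lattice is the interval $[X,Y]$ and whose support map is the restriction of $\sigma$. Its identity $1_M$ has support the top $Y$ of $[X,Y]$, and it is the \emph{only} element of $M$ with support $Y$: if $\sigma(a)=Y=\sigma(1_M)$, then $1_M a = a$ gives $a\leq 1_M$ in the $\R$-order, and the observation used in the proof of Lemma~\ref{Rhodeselem}(1) (an element dominated by another of equal support coincides with it) yields $a=1_M$. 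Thus every element of $B[X,Y)=M\setminus\{1_M\}$ has support strictly below $Y$, so $\sigma$ restricts to a map $B[X,Y)\to[X,Y)$; by Lemma~\ref{Rhodeselem}(1) it carries any chain $b_0<\cdots<b_k$ of $B[X,Y)$ to a strict chain $\sigma(b_0)<\cdots<\sigma(b_k)$ of $[X,Y)$. Adjoining the element $Y$ to a chain of $[X,Y)$ produces a chain of $[X,Y]\subseteq\Lambda(B)$, which has at most $m+1$ elements; hence a chain of $[X,Y)$ has at most $m$ elements, and so $k+1\leq m$. Therefore every simplex of $\Delta(X,Y)$ has at most $m$ vertices, i.e. $\dim\Delta(X,Y)\leq m-1$, its reduced cochain complex is concentrated in degrees $-1,0,\dots,m-1$, and consequently $\til H^{m}(\Delta(X,Y),\Bbbk)=0$. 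Thus $m$ belongs to the set in the displayed formula, whence $\mathrm{gl.}\dim \Bbbk B\leq m$. (The same argument gives the slightly sharper bound $\mathrm{gl.}\dim \Bbbk B\leq 1+\max_{X<Y}\dim\Delta(X,Y)$, and recovers the estimate of Nico quoted above.)

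Given Theorem~\ref{mainresult} there is no serious obstacle here; the only point that needs care is the dimension count, specifically the remark that deleting the identity from $B[X,Y]$ removes every element lying over the top of the support lattice, which is exactly what costs $\Delta(X,Y)$ a dimension and produces the bound $m$ rather than $m+1$.
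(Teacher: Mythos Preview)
Your argument is correct and follows the same line as the paper: the identity for $\mathrm{gl.}\dim\Bbbk B$ is read off directly from Theorem~\ref{mainresult} and the characterization of global dimension via $\Ext$ between simples, and the bound $\mathrm{gl.}\dim\Bbbk B\leq m$ comes from $\dim\Delta(X,Y)\leq m-1$ via Lemma~\ref{Rhodeselem}. The paper obtains the latter slightly more tersely by noting that (after choosing a representative) $B[X,Y)$ sits inside $B\setminus\{1\}=B[\wh 0,\wh 1)$ as a subposet, so $\dim\Delta(X,Y)\leq\dim\Delta(\wh 0,\wh 1)=m-1$; your route through the support map on $B[X,Y)$ and the observation that only the identity of $B[X,Y]$ has support $Y$ is an equally valid unpacking of the same fact.
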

\begin{proof}
The first statement on global dimension is immediate from
Theorem~\ref{mainresult}.  If $P$ is a finite poset, then the dimension of
$\Delta(P)$ is the length of the longest chain in $P$.   By Lemma~\ref{Rhodeselem} the longest chain in $\Delta(\wh
0,\wh 1)$ has length $m-1$.  Therefore, for $X<Y$, one has $\dim
\Delta(X,Y)\leq \dim \Delta(\wh 0,\wh 1)=m-1$ and so $\til
H^{m}(\Delta(X,Y),\Bbbk)=0$.  Thus $\mathrm{gl.}\dim \Bbbk B\leq m$.
\end{proof}

As an immediate corollary, we obtain a characterization of those algebras $\Bbbk B$
that are hereditary in terms of the order complex of $B$.
\begin{Cor}
    $\Bbbk B$ is hereditary if and only if each connected component of each
    simplicial complex $\Delta(X,Y)$, for $X < Y \in \Lambda(B)$, is acyclic.
\end{Cor}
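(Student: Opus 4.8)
The plan is to read this corollary directly off Theorem~\ref{mainresult} (alternatively off Theorem~\ref{globaldimension}), combined with the elementary fact that, in positive degrees, the reduced cohomology of a simplicial complex is the direct sum of the reduced cohomologies of its connected components. Throughout, $\Bbbk$ is a field, so the modules $\Bbbk_X$, $X\in\Lambda(B)$, are exactly the simple $\Bbbk B$-modules. By definition, $\Bbbk B$ is hereditary precisely when $\mathrm{gl.}\dim\Bbbk B\leq 1$, and, by the characterization of global dimension via simple modules recalled above, this holds if and only if $\Ext^{m}_{\Bbbk B}(\Bbbk_X,\Bbbk_Y)=0$ for every $m\geq 2$ and every $X,Y\in\Lambda(B)$. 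By Theorem~\ref{mainresult}, for $m\geq 2$ the space $\Ext^{m}_{\Bbbk B}(\Bbbk_X,\Bbbk_Y)$ equals $\til H^{m-1}(\Delta(X,Y),\Bbbk)$ when $X<Y$ and vanishes otherwise. Hence $\Bbbk B$ is hereditary if and only if $\til H^{k}(\Delta(X,Y),\Bbbk)=0$ for all $k\geq 1$ and all $X<Y$ in $\Lambda(B)$.

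It then remains to reconcile this vanishing condition with the stated combinatorial condition. First I would fix a pair $X<Y$ and write $\Delta(X,Y)=K_1\sqcup\cdots\sqcup K_r$ as the disjoint union of its connected components. For each $k\geq 1$ one has $\til H^{k}(\Delta(X,Y),\Bbbk)=H^{k}(\Delta(X,Y),\Bbbk)=\bigoplus_{i=1}^{r}H^{k}(K_i,\Bbbk)=\bigoplus_{i=1}^{r}\til H^{k}(K_i,\Bbbk)$, while each $K_i$, being connected and nonempty, automatically has $\til H^{0}(K_i,\Bbbk)=0$. Therefore $\til H^{k}(\Delta(X,Y),\Bbbk)=0$ for all $k\geq 1$ if and only if $\til H^{k}(K_i,\Bbbk)=0$ for all $k\geq 0$ and all $i$, i.e., if and only if every connected component of $\Delta(X,Y)$ is acyclic. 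Ranging over all pairs $X<Y$ yields the corollary.

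There is no serious obstacle here: this is a direct unwinding of the main theorem. The only subtlety worth flagging is the bookkeeping in degree $0$ between reduced and unreduced cohomology — passing to connected components changes $\til H^{0}$ (by the familiar ``number of components minus one''), but is harmless in all positive degrees, and it is precisely this additivity over components in positive degrees that makes the homological condition ``$\til H^{k}$ vanishes for $k\geq 1$'' coincide with the purely combinatorial condition ``each component is acyclic''.
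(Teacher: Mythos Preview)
Your proposal is correct and is precisely the intended argument: the paper states this as an immediate corollary of Theorem~\ref{globaldimension} (equivalently Theorem~\ref{mainresult}) without giving a separate proof, and your unwinding---reducing heredity to the vanishing of $\til H^{k}(\Delta(X,Y),\Bbbk)$ for all $k\geq 1$, then splitting over connected components---is exactly what makes it immediate. Your remark on the degree-$0$ bookkeeping is the only point worth noting, and you handle it correctly.
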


It is well known that, for a finite dimensional algebra $A$, the projective dimension of a finite dimensional $A$-module $M$ is the least $d$ such that $\Ext_A^{d+1}(M,S)=0$ for all simple $A$-modules $S$. The global dimension $A$ coincides with the maximum projective dimension of a simple $A$-module.  Theorem~\ref{mainresult} thus yields the following refinement of Theorem~\ref{globaldimension}.

\begin{Cor}
\label{pdim}
Let $B$ be a finite left regular band and let $\Bbbk$ be a field.  Let $X\in \Lambda(B)$.  Then the projective dimension of $\Bbbk_X$ is given by
\[\mathrm{proj}.\dim \Bbbk_X = \min \{n\mid \til H^n(\Delta(X,Y),\Bbbk)=0,\
                            \forall Y>X\}.\]
\end{Cor}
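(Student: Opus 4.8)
The plan is to deduce this directly from Theorem~\ref{mainresult} together with the standard homological characterization of projective dimension that is recalled just before the statement. Recall the key fact: for a finite dimensional algebra $A$ over a field, the projective dimension of a finitely generated module $M$ equals the least $d$ such that $\Ext_A^{d+1}(M,S)=0$ for every simple $A$-module $S$, and $\mathrm{gl.}\dim A$ is the supremum over simple $M$ of $\mathrm{proj}.\dim M$. Here we apply this with $A=\Bbbk B$ and $M=\Bbbk_X$. The simple $\Bbbk B$-modules are precisely the $\Bbbk_Y$ for $Y\in\Lambda(B)$, as noted in Section~\ref{ss:mainresult}, so $\mathrm{proj}.\dim\Bbbk_X$ is the least $d$ such that $\Ext_{\Bbbk B}^{d+1}(\Bbbk_X,\Bbbk_Y)=0$ for all $Y\in\Lambda(B)$.

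Next I would substitute the formula from Theorem~\ref{mainresult}. For a fixed $Y$, we split into cases. If $Y=X$, then $\Ext^n_{\Bbbk B}(\Bbbk_X,\Bbbk_X)$ is $\Bbbk$ for $n=0$ and $0$ for $n\geq 1$, so this $Y$ imposes no constraint beyond $d\geq 0$. If $Y\not> X$ and $Y\neq X$, then $\Ext^n_{\Bbbk B}(\Bbbk_X,\Bbbk_Y)=0$ for all $n$, so again no constraint. If $Y>X$, then for $n\geq 1$ we have $\Ext^n_{\Bbbk B}(\Bbbk_X,\Bbbk_Y)=\til H^{n-1}(\Delta(X,Y),\Bbbk)$, and $\Ext^0_{\Bbbk B}(\Bbbk_X,\Bbbk_Y)=0$. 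Therefore $\Ext^{d+1}_{\Bbbk B}(\Bbbk_X,\Bbbk_Y)=0$ for all $Y\in\Lambda(B)$ is equivalent to $\til H^{d}(\Delta(X,Y),\Bbbk)=0$ for all $Y>X$ (the value $d+1=0$ never occurs since $d\geq 0$, and when $d+1\geq 1$ the reduced cohomology in degree $d$ is what appears). Taking the least such $d$ gives exactly
\[
    \mathrm{proj}.\dim\Bbbk_X
    = \min\{\, n \mid \til H^n(\Delta(X,Y),\Bbbk)=0,\ \forall Y>X \,\},
\]
which is the claimed formula. One should be slightly careful about the degenerate situation in which $X=\wh 1$, so that there is no $Y>X$: then the minimum is over the empty set of conditions and equals $0$, consistent with $\Bbbk_{\wh 1}$ being projective (indeed it is the top simple in the quasi-hereditary order, and $\Ext^{\geq 1}_{\Bbbk B}(\Bbbk_{\wh 1},\Bbbk_Y)=0$ for all $Y$ by the theorem).

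There is no real obstacle here; this is a routine bookkeeping argument once Theorem~\ref{mainresult} is in hand. The only point requiring a moment's thought is matching the degree shift: projective dimension $d$ corresponds to the vanishing of $\Ext^{d+1}$, which via the theorem is the vanishing of $\til H^{d}$, so the reindexing by one that is built into the statement of Theorem~\ref{mainresult} conveniently cancels the shift coming from the definition of projective dimension. Finally, the remark that $\mathrm{gl.}\dim\Bbbk B=\max_{X\in\Lambda(B)}\mathrm{proj}.\dim\Bbbk_X$ recovers Theorem~\ref{globaldimension}, giving an internal consistency check.
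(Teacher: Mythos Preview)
Your proof is correct and follows exactly the approach indicated in the paper: the corollary is stated there without a proof block, as an immediate consequence of Theorem~\ref{mainresult} together with the well-known characterization of projective dimension recalled in the sentence preceding it. Your case analysis and handling of the degree shift are precisely what is needed, and your treatment of the degenerate case $X=\wh 1$ is a nice extra check.
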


\subsection{Leray numbers and an improved upper bound}

We can improve greatly on Nico's upper bound on the global dimension in the case of left regular bands.
First we recall the notion of the Leray number of a simplicial complex. If $K$
is a simplicial complex with vertex set $V$ and $W\subseteq V$, then the
induced subcomplex $K[W]$ is the subcomplex consisting of all simplices whose
vertices belong to $W$.

The \emph{$\Bbbk$-Leray number} of a finite simplicial complex $K$ with vertex set
$V$ is defined by
\begin{align*}
    L_\Bbbk(K) =\min\{d\mid \til H^i(K[W],\Bbbk) =0, \forall i\geq d, \forall W\subseteq V\}.
\end{align*}
See for instance~\cite{Kalai1,Kalai2}.  Originally, interest in Leray numbers came about because of connections with Helly-type theorems~\cite{Wegner,BolandLek,Kalai2}:  the Leray number of a simplicial complex provides an obstruction for realizing the complex as the nerve of a collection of compact convex subsets of $\mathbb R^d$.

Leray numbers also play a role in combinatorial commutative algebra. Let $K$ be a simplicial complex with vertex set $\{x_1,\ldots,x_n\}$. Recall that the \emph{face ideal} $I_K$ of $K$ is the ideal of the polynomial ring $\Bbbk[x_1,\ldots,x_n]$ generated by all square-free monomials $x_{i_1}\cdots x_{i_m}$ with $\{x_{i_1},\ldots, x_{i_m}\}$ not a face of $K$.  The \emph{Stanley-Reisner ring} of $K$ over $\Bbbk$ is $\Bbbk[x_1,\ldots,x_n]/I_K$~\cite{Stanleycommut}.
The $\Bbbk$-Leray number of a simplicial complex $K$ turns out to be the Castelnuovo-Mumford
regularity of the Stanley-Reisner ring of $K$ over $\Bbbk$~\cite{Kalai2,Dochtermann,Woodroofe}. Equivalently, the regularity of the ideal $I_K$ is $L_{\Bbbk}(K)+1$~\cite{Kalai1}.

Theorem~\ref{globaldimension} implies the following upper bound on the global dimension of a left regular band algebra.

\begin{Thm}\label{Leraybound}
Let $B$ be a finite left regular band and $\Bbbk$ a field.  Then $\mathrm{gl.}\dim
\Bbbk B$ is bounded above by the Leray number $L_\Bbbk(\Delta(B))$ of the order complex
of $B$.
\end{Thm}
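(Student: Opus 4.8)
The strategy is to combine Theorem~\ref{globaldimension} with one structural fact: every complex $\Delta(X,Y)$ appearing there is, up to isomorphism, an \emph{induced} subcomplex of $\Delta(B)$. Granting this, the bound is immediate. Put $d = L_\Bbbk(\Delta(B))$. By the definition of the Leray number, $\til H^i(\Delta(B)[W],\Bbbk)=0$ for every $i\geq d$ and every subset $W$ of the vertex set of $\Delta(B)$. Applying this to the induced subcomplexes isomorphic to the various $\Delta(X,Y)$ gives $\til H^i(\Delta(X,Y),\Bbbk)=0$ for all $i\geq d$ and all $X<Y$ in $\Lambda(B)$, and Theorem~\ref{globaldimension} then forces $\mathrm{gl.}\dim\Bbbk B\leq d$.

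So the real content is the structural fact. Fix $X<Y$ in $\Lambda(B)$ and choose any $a'\in B$ with $\sigma(a')=Y$; since $Y\geq X$ we have $a'\in B_{\geq X}$. Using the identifications $B[X,Y]\cong B_{\geq X}[Y]\cong B[Y]_{\geq X}$ of Section~\ref{ss:intervalsubmonoids}, one realizes $B[X,Y]$ concretely inside $B$ as the subset $a'B_{\geq X}=\{c\in B\mid a'c=c,\ \sigma(c)\geq X\}$, whose identity element --- equivalently, its $\leq_\R$-maximum --- is $a'$. Two points then need checking. First, the $\R$-order on this subset is merely the restriction of the $\R$-order of $B$: in a left regular band $u\leq_\R v$ is equivalent to the condition $vu=u$ (Section~\ref{ss:Rorder}), which is the same relation whether it is computed in $a'B_{\geq X}$ or in $B$. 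Second, deleting the identity $a'$ identifies $B[X,Y)$ with the subposet $W_{X,Y}:=\{c\in B\mid c<_\R a',\ \sigma(c)\geq X\}$ of $(B,\leq_\R)$. Since the order complex of an induced subposet is exactly the induced subcomplex on the corresponding set of vertices, we obtain $\Delta(X,Y)\cong\Delta(B)[W_{X,Y}]$; and since $1\notin W_{X,Y}$, the right-hand side really is an induced subcomplex of $\Delta(B)$. Different choices of $a'$ give isomorphic induced subcomplexes, which is harmless.

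The main --- and essentially only --- obstacle is carrying out this realization with care: verifying that the interval submonoid $B[X,Y)$, which is only defined up to isomorphism in Section~\ref{ss:intervalsubmonoids}, is faithfully reproduced as an $\R$-ordered set inside $(B,\leq_\R)$. Once that is done the rest is formal, and the theorem drops out in one line from the definition of the Leray number together with Theorem~\ref{globaldimension}. (The degenerate case $W_{X,Y}=\emptyset$, that is $B[X,Y]=\{a'\}$, causes no difficulty, since then $\til H^i(\Delta(X,Y),\Bbbk)$ vanishes in every nonnegative degree anyway.)
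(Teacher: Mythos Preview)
Your proposal is correct and follows exactly the route the paper intends: the paper states Theorem~\ref{Leraybound} as an immediate consequence of Theorem~\ref{globaldimension} without giving a proof, and you have supplied the one detail left implicit, namely that each $\Delta(X,Y)$ is realized as an induced subcomplex $\Delta(B)[W_{X,Y}]$ of $\Delta(B)$. Your verification of this via the concrete realization $B[X,Y)\cong a'B_{\geq X}\setminus\{a'\}$ and the observation that the $\R$-order restricts is accurate; the remark about $1\notin W_{X,Y}$ is harmless but unnecessary, since any subset of the vertex set yields an induced subcomplex.
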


\subsection{Right hereditary left regular bands have hereditary algebras}

To apply the bound of Theorem \ref{Leraybound}, we need the well-known notion
of the \emph{clique complex} or \emph{flag complex} of a graph.  If
$\Gamma=(V,E)$ is a simple graph, then the clique complex $\Cliq(\Gamma)$ is the
simplicial complex whose vertex set is $V$ and whose simplices are the finite
cliques (subsets of vertices inducing a complete subgraph).  Notice that
$\Gamma$ is the $1$-skeleton of $\Cliq(\Gamma)$ and that $\Cliq(\Gamma)$ is obtained by
`filling in' the $1$-skeleton of every $q$-simplex found in $\Gamma$.   If
$W\subseteq V$, then $\Cliq(\Gamma)[W]=\Cliq(\Gamma[W])$.

As an example, let $P$ be a poset. The \emph{comparability graph} $\Gamma(P)$
is the graph with vertex set $P$ and edge set all pairs $(p,q)$ with $p<q$ or
$q<p$. It is immediate from the definition that $\Cliq(\Gamma(P))=\Delta(P)$.

It is known that $L_\Bbbk(K)=0$ if and only if $K$ is a simplex and $L_\Bbbk(K)\leq 1$
if and only if $K$ is the clique complex of a chordal graph, cf.~\cite{Wegner,BolandLek}.  Recall that a graph $\Gamma$ is \emph{chordal} if it
contains no induced cycle of length greater than or equal to $4$.  Of course,
any induced subgraph of a chordal graph is chordal.  Let us sketch a proof.

\begin{Prop}[Folklore]\label{chordalgraph}
Let $K$ be a finite simplicial complex.  Then $L_\Bbbk(K)\leq 1$ if and only if $K$
is the clique complex of a chordal graph.  Moreover, $L_\Bbbk(K)=0$ if and only if
$K$ is a simplex.
\end{Prop}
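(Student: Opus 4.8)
The plan is to reduce everything to two clean statements: (i) a simplicial complex $K$ has $L_\Bbbk(K)=0$ exactly when it is a simplex; and (ii) $L_\Bbbk(K)\le 1$ exactly when $K$ is the clique complex of a chordal graph. For (i), observe that $L_\Bbbk(K)=0$ forces $\til H^i(K[W],\Bbbk)=0$ for all $i\ge 0$ and all $W$; in particular $\til H^0(K[W],\Bbbk)=0$ for every $W$, so every induced subcomplex is connected, and applying this to two-element sets $W=\{x,y\}$ shows every pair of vertices forms an edge, so the $1$-skeleton is complete. Connectivity of all induced subcomplexes together with vanishing of higher reduced cohomology of all induced subcomplexes is exactly the statement that $K$ has no ``holes'' at any level, and the quickest way to nail it down is: if $K$ is not a full simplex on its vertex set $V$, pick a minimal non-face $\tau\subseteq V$; then $K[\tau]$ is the boundary of a simplex of dimension $|\tau|-1\ge 1$, whose top reduced cohomology $\til H^{|\tau|-2}(K[\tau],\Bbbk)=\Bbbk\ne 0$, contradicting $L_\Bbbk(K)=0$. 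Conversely a simplex has contractible (hence cohomologically trivial) induced subcomplexes, so $L_\Bbbk=0$. This disposes of the ``moreover'' clause.

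For the main equivalence, first note that $L_\Bbbk(K)\le 1$ already implies $\til H^i(K[W],\Bbbk)=0$ for $i\ge 2$ and all $W$; I claim this forces $K$ to be a flag complex, i.e.\ $K=\Cliq(\Gamma)$ where $\Gamma$ is the $1$-skeleton of $K$. If not, there is a minimal non-face $\tau$ with $|\tau|\ge 4$ (minimal non-faces of size $\le 2$ are impossible if $\Gamma$ is the $1$-skeleton and size $3$ would just say $K$ is not flag at a triangle — one handles the triangle case by the same boundary-sphere argument below); then $K[\tau]=\partial\Delta^{|\tau|-1}$, a sphere of dimension $|\tau|-2\ge 2$, whose top cohomology is nonzero in degree $\ge 2$, a contradiction. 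So $K=\Cliq(\Gamma)$. Next, still assuming $L_\Bbbk(K)\le 1$, suppose $\Gamma$ has an induced cycle $C$ of length $\ell\ge 4$. Then $\Cliq(\Gamma)[C]=\Cliq(C)=C$ (the cycle has no triangles), which is homotopy equivalent to $S^1$, so $\til H^1(\Cliq(\Gamma)[C],\Bbbk)=\Bbbk\ne 0$ — fine, since we only require vanishing in degrees $\ge 2$. So this direction needs the \emph{full} strength $\til H^i(K[W])=0$ for $i\ge 1$... wait: that would give $L_\Bbbk\le 0$. The correct reading: $L_\Bbbk(K)\le 1$ means $\til H^i(K[W],\Bbbk)=0$ for all $i\ge 1$ and all $W$ \emph{is what $L_\Bbbk\le 1$ is NOT} — rather $\til H^i=0$ for $i\ge 1$ is $L_\Bbbk\le 1$ by the definition $L_\Bbbk(K)=\min\{d:\til H^i(K[W])=0\ \forall i\ge d\}$. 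So indeed $L_\Bbbk(K)\le 1\iff \til H^i(K[W],\Bbbk)=0$ for all $i\ge 1$ and all $W\subseteq V$. With this corrected reading: an induced cycle $C$ of length $\ge 4$ gives $\til H^1(\Cliq(\Gamma)[C],\Bbbk)\ne 0$, contradiction, so $\Gamma$ is chordal; and the flag condition follows from the $i\ge 1$ (in particular $i\ge 2$) vanishing exactly as above.

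For the converse — if $\Gamma$ is chordal then $L_\Bbbk(\Cliq(\Gamma))\le 1$ — I would invoke the standard characterization of chordal graphs via a \emph{perfect elimination ordering}: there is a vertex $v$ (a simplicial vertex) whose neighborhood is a clique, and $\Gamma\setminus v$ is again chordal. Since chordality is inherited by induced subgraphs, it suffices to show $\til H^i(\Cliq(\Gamma),\Bbbk)=0$ for all $i\ge 1$ whenever $\Gamma$ is chordal, and then quote this for every induced subgraph $\Gamma[W]$. Proceed by induction on $|V|$: with $v$ simplicial, write $\Cliq(\Gamma)=\Cliq(\Gamma\setminus v)\cup \mathrm{star}(v)$, where $\mathrm{star}(v)$ is a cone (hence acyclic) and the intersection is $\mathrm{link}(v)=\Cliq(\Gamma[N(v)])$, which is a simplex (as $N(v)$ is a clique) and hence acyclic. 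The Mayer–Vietoris sequence then gives $\til H^i(\Cliq(\Gamma),\Bbbk)\cong\til H^i(\Cliq(\Gamma\setminus v),\Bbbk)$ for all $i$, and the inductive hypothesis finishes the job. The one genuine subtlety — the ``main obstacle'' — is getting the inequality direction of the definition of $L_\Bbbk$ straight and being careful that the perfect-elimination induction is applied to \emph{every} induced subcomplex simultaneously; once the bookkeeping is set up (chordality passes to induced subgraphs, so the Mayer–Vietoris computation applies uniformly), the rest is routine. Since the proposition is flagged as folklore, it is appropriate to present only this sketch and cite~\cite{Wegner,BolandLek} for the details.
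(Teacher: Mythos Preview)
Your argument is correct and follows essentially the same route as the paper: the forward direction uses the minimal non-face trick (an induced boundary-of-simplex giving nonzero reduced cohomology in the relevant degree), and the converse proceeds by induction on the vertex set via a simplicial vertex of the chordal graph. The only technical difference is that where the paper observes an elementary collapse giving a simple homotopy equivalence $\Cliq(\Gamma)\simeq\Cliq(\Gamma\setminus v)$, you instead run Mayer--Vietoris on the decomposition $\Cliq(\Gamma)=\Cliq(\Gamma\setminus v)\cup\overline{\mathrm{star}}(v)$ with intersection the simplex on $N(v)$; both arguments are standard and yield the same conclusion. Your write-up would be cleaner without the mid-proof self-correction about the meaning of $L_\Bbbk(K)\le 1$---settle the definition at the outset and the rest flows without the detour.
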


\begin{proof}
Since every induced subcomplex of a simplex is a simplex, clearly the Leray
number of a simplex is $0$.

Suppose that $L_\Bbbk(K)\leq 1$. Then $K$ is the clique complex of its $1$-skeleton
$\Gamma$.  Otherwise, there is a clique $W$ in $\Gamma$ which is not a simplex.
Necessarily $|W|\geq 3$.  If we choose $W$ to be of minimal size, then we have
the induced subcomplex $K[W]$ is topologically a sphere of dimension $|W|-2$.
Thus $L_\Bbbk(K)\geq |W|-1\geq 2$.  Thus $K$ is the clique complex of $\Gamma$. If
$K$ is not a simplex, then there is a pair of vertices $\{v,w\}$ which do not
form an edge.  The induced subcomplex $K[\{v,w\}]$ is not connected and so
$L_\Bbbk(K)>0$. Therefore, $L_\Bbbk(K)=0$ implies $K$ is a simplex.
Next suppose that $C_n$ is an induced $n$-cycle in $\Gamma$ with $n\geq 4$.
Then since $\Cliq(C_n)=C_n$ and $\til H^1(C_n,\Bbbk)\cong \Bbbk$, it follows that
$L_\Bbbk(\Cliq(\Gamma))\geq 1$.  Thus $L_\Bbbk(\Cliq(\Gamma))\leq 1$ implies $\Gamma$ is
chordal.

For the converse, it suffices to show that every connected chordal graph
$\Gamma=(V,E)$ has a contractible clique complex, cf.~\cite[Lemma~3.1]{Dochtermann}. This is done by induction on
the number of vertices of $\Gamma$. The proof relies on a classical result of
Dirac~\cite{Dirac} and of Boland and Lekkerkerker \cite{BolandLek} that every chordal graph has a simplicial
vertex $v$, that is, a vertex $v$ whose neighbors form a clique.  By induction
$\Cliq(\Gamma[V\setminus \{v\}])$ is contractible.  Let $X$ be the set of neighbors
of $v$.  Then $X$ is a simplex of $\Cliq(\Gamma[V\setminus \{v\}])$ and $\Cliq(\Gamma)$
is obtained from $\Cliq(\Gamma[V\setminus\{v\}])$ by attaching the simplex $Y=X\cup
\{v\}$ along the facet $X$.  Thus we can collapse $Y$ into $X$, yielding a
simple homotopy equivalence of $\Cliq(\Gamma)$ and $\Cliq(\Gamma[V\setminus \{v\}])$.
This completes the proof.
\end{proof}

From Proposition~\ref{chordalgraph}, it follows that the bound in
Theorem~\ref{globaldimension} is not tight.  Indeed, if $L$ is a finite
lattice, viewed as a left regular band via the meet operation, then
$\mathrm{gl.}\dim \Bbbk L=0$, but unless $L$ is a chain, its order complex is not a
simplex.  However, Theorem~\ref{globaldimension} is tight for right hereditary
left regular bands.

\begin{Thm}\label{firstrightheredproof}
Let $B$ be a finite left regular band and $\Bbbk$ a field.  Suppose that the Hasse
diagram of $B$ is a rooted tree, i.e., $B$ is right hereditary.  Then $\Bbbk B$ is
hereditary.
\end{Thm}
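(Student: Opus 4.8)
The plan is to deduce this from Theorem~\ref{globaldimension} (or, more precisely, from Theorem~\ref{Leraybound}) by showing that when the Hasse diagram of $B$ is a rooted tree, every induced subcomplex of the order complex $\Delta(B)$ is acyclic, hence $L_\Bbbk(\Delta(B))\leq 1$, and actually that the stronger condition in the corollary after Theorem~\ref{globaldimension} holds: each connected component of each $\Delta(X,Y)$ is acyclic. So the first step is to understand the order complexes $\Delta(X,Y)$ for $X<Y$ in $\Lambda(B)$. Recall $\Delta(X,Y)$ is the order complex of the poset $B[X,Y)$, which is the ideal of the interval submonoid $B[X,Y]$ obtained by deleting the identity. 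Since right heredity is inherited by interval submonoids (noted in Section~\ref{ss:geometricLRBs}), the Hasse diagram of $B[X,Y]$ is again a rooted tree, and deleting the identity (its minimum) leaves a forest. So it suffices to prove: if $P$ is a finite poset whose Hasse diagram is a forest, then every connected component of the order complex $\Delta(P)$ is contractible — equivalently, $\Delta(P)$ has the homotopy type of a discrete set.

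The key step is therefore a purely order-theoretic lemma about posets with forest-shaped Hasse diagrams. I would argue by induction on $|P|$, restricting to a connected component, so we may assume the Hasse diagram of $P$ is a tree. Pick a leaf $v$ of the Hasse diagram. Because the Hasse diagram is a tree, $v$ is comparable to exactly one other vertex $w$ (its unique Hasse neighbor) among all vertices adjacent to it in the Hasse diagram — but one must be careful, as $v$ could be comparable to vertices further away along the tree. The cleaner approach: choose $v$ to be a vertex that is \emph{maximal} in $P$ and is a leaf of the Hasse diagram (such a vertex exists — take any maximal element lying at the end of a longest root-to-leaf path, or argue that the Hasse diagram being a tree forces some maximal element to have Hasse-degree one). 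Then the set of elements strictly below $v$ forms a chain: if $a,b<v$ were incomparable, the Hasse diagram would contain two distinct edge-disjoint paths from $v$ down to a common lower bound (or to the incomparable pair's meet-structure), contradicting the tree property. Hence the link of $v$ in $\Delta(P)$ is the order complex of a chain, which is a simplex, so in particular contractible. Since $\Delta(P) = \Delta(P\setminus\{v\})\cup \overline{\mathrm{star}}(v)$ glued along $\mathrm{link}(v)$, and $\mathrm{link}(v)$ is contractible (a simplex) sitting inside the contractible closed star of $v$, the inclusion $\Delta(P\setminus\{v\})\hookrightarrow\Delta(P)$ is a homotopy equivalence by the gluing lemma (or: one collapses the star of $v$ onto $\mathrm{link}(v)$). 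By induction $\Delta(P\setminus\{v\})$ is a disjoint union of contractible complexes; the removal of $v$ may disconnect the tree, but each resulting component is still forest-shaped, so the induction goes through, and reattaching $v$ keeps the component containing $w$ contractible.

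Once this lemma is in hand, the theorem is immediate: for every $X<Y$ in $\Lambda(B)$, the poset $B[X,Y)$ has forest-shaped Hasse diagram, so $\til H^n(\Delta(X,Y),\Bbbk)=0$ for all $n\geq 1$; by Theorem~\ref{globaldimension}, $\mathrm{gl.}\dim\Bbbk B\leq 1$, i.e.\ $\Bbbk B$ is hereditary. (Alternatively, one records $L_\Bbbk(\Delta(B))\leq 1$ via Proposition~\ref{chordalgraph} — the comparability graph of a poset with forest Hasse diagram is chordal — and invokes Theorem~\ref{Leraybound}; but the direct route through Theorem~\ref{globaldimension} is shorter and avoids proving chordality.) The main obstacle I anticipate is the order-theoretic lemma, specifically verifying carefully that some maximal element of $P$ is a leaf of the Hasse diagram and that its downset is a chain — the tree condition has to be used to rule out "diamonds" $a<v$, $b<v$ with $a,b$ incomparable, which would create a cycle in the Hasse diagram only if $a$ and $b$ share a common lower bound, so one must also handle the case where $a$ and $b$ have no common lower bound in $P$ (then the star-gluing argument needs a minor adjustment, or one picks $v$ minimal-below-maximal more carefully). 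Handling that edge case cleanly is the one spot where I would spend real care.
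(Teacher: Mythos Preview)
Your inductive leaf-removal argument has a real gap, and it is exactly the one you flag at the end: in a tree-shaped Hasse diagram there need not exist a maximal element that is a Hasse leaf with chain downset. Take $P=\{r,a,b,c\}$ with $a,b<r$ and $c<a$; the only maximal element $r$ has degree two. Switching to a minimal leaf does not save the argument either: in $P=\{a,b,c,d,e\}$ with $a,b<c$ and $c<d,e$ the minimal leaf $a$ has upset $\{c,d,e\}$, which is not a chain, so its link is not a simplex. One can push the induction through by proving instead that the link is \emph{contractible} (not a simplex), but that is essentially the same induction again, and you end up reproving the chordal-clique-complex contractibility from Proposition~\ref{chordalgraph}.

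There is a much shorter route you are overlooking. In the paper's setting the Hasse diagram of $B$ is a rooted tree with the identity at the \emph{top} (the identity is the $\R$-maximum), so every element has a unique cover. The same holds for each interval submonoid $B[X,Y]$. Deleting the identity of $B[X,Y]$ therefore produces a forest in which each tree has a unique \emph{maximum} (the former child of $e_Y$ at its root), and elements in distinct subtrees are incomparable because the upset of any element is the chain from it to $e_Y$. Hence each connected component of $B[X,Y)$ is a poset with a maximum, its order complex is a cone, and $\til H^n(\Delta(X,Y),\Bbbk)=0$ for all $n\geq1$. Theorem~\ref{globaldimension} then gives $\mathrm{gl.}\dim\Bbbk B\leq1$. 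This is in fact the content of the second proof (Theorem~\ref{treethm}) in the paper.

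The paper's \emph{first} proof of the present theorem is precisely the chordality route you set aside as ``longer'': one shows the comparability graph of $B$ is chordal (any induced path $a<b<c$ or $a>b>c$ has the chord $a\,\text{--}\,c$, and no induced path $a>b<c$ can occur since $b$ has a unique cover), then invokes Proposition~\ref{chordalgraph} and Theorem~\ref{Leraybound}. Given that Proposition~\ref{chordalgraph} is already established, this is actually the shortest argument available, so your instinct to bypass it was misplaced.
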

\begin{proof}
It is well known that the comparability graph of a poset whose Hasse diagram is
a rooted tree is chordal.  In fact, it is known that a finite graph has no
induced simple path on four nodes and no induced cycle on four nodes if and
only if it is the comparability graph of a finite poset whose Hasse diagram is
a disjoint union of rooted trees~\cite{Wolk}.  The idea is that any
simple path of length $3$ of the form $a<b<c$ or $a>b>c$ has a chord in the
induced subgraph.  In a disjoint union of rooted trees, there are no simple
paths of the form $a>b<c$.
\end{proof}

Since the free left regular band is right hereditary, this provides a
conceptual proof that the algebra of a free left regular band is
hereditary, a result first proved by K.~Brown using quivers
and a counting argument~\cite[Theorem~13.1]{Saliola}.  We shall momentarily give another, more transparent
proof of Theorem~\ref{firstrightheredproof}, which also makes it simple to
compute the quiver of a right hereditary left regular band.

\subsection{Geometric left regular bands and commutation graphs}
Suppose now that $B$ is a finite geometric left regular band. Since this class
is closed under taking interval submonoids, we can restrict our attention to
$\Delta(\wh 0,\wh 1)$. We provide a simplicial complex homotopy equivalent to
$\Delta(\wh 0,\wh 1)$, using the following special case of Rota's cross-cut theorem, see the survey paper of Bj\"orner~\cite{bjornersurvey}.  A proof is given for completeness in Corollary~\ref{crosscut} below.

\begin{Thm}[Rota]\label{crosscutstated}
Let $P$ be a finite poset such that any subset of $P$ with a common lower bound
has a meet.  Define a simplicial complex $K$ with vertex set the set $\mathscr
M(P)$ of maximal elements of $P$ and with simplices those subsets of $\mathscr
M(P)$ with a common lower bound.  Then $K$ is homotopy equivalent to the order
complex $\Delta(P)$.
\end{Thm}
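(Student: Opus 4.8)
The plan is to realize $K$ as the nerve of a cover of the order complex $\Delta(P)$ by contractible subcomplexes and then to invoke the Nerve Lemma. For each maximal element $m\in\mathscr M(P)$, let $P_{\le m}=\{p\in P\mid p\le m\}$ be the principal order ideal it generates, and let $\Delta(P_{\le m})$ be the corresponding full subcomplex of $\Delta(P)$, namely the set of chains of $P$ contained in $P_{\le m}$. Since $P_{\le m}$ has $m$ as its maximum, $\Delta(P_{\le m})$ is a cone with apex $m$ and is therefore contractible. As $P$ is finite, every element lies below some maximal element, so $\{\Delta(P_{\le m})\mid m\in\mathscr M(P)\}$ is a cover of $\Delta(P)$ by subcomplexes.

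The first thing I would check is the intersection hypothesis of the Nerve Lemma. For a nonempty subset $S\subseteq\mathscr M(P)$ one has $\bigcap_{m\in S}\Delta(P_{\le m})=\Delta\bigl(\bigcap_{m\in S}P_{\le m}\bigr)$, and $\bigcap_{m\in S}P_{\le m}$ is precisely the set of common lower bounds of $S$ in $P$. By the hypothesis on $P$, whenever this set is nonempty it has a greatest element --- the meet $\bigwedge S$ --- so the intersection is again a cone and is contractible; and when $S$ has no common lower bound, the intersection is empty. Hence every nonempty finite intersection of members of the cover is contractible, and the Nerve Lemma produces a homotopy equivalence between $\Delta(P)$ and the nerve $\mathcal N\bigl(\{\Delta(P_{\le m})\}_{m\in\mathscr M(P)}\bigr)$.

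It then remains to identify this nerve with $K$. Its vertices are the $m\in\mathscr M(P)$ for which $\Delta(P_{\le m})\ne\emptyset$, which is all of $\mathscr M(P)$ since $m\in P_{\le m}$; and a finite subset $S\subseteq\mathscr M(P)$ spans a simplex of the nerve exactly when $\bigcap_{m\in S}\Delta(P_{\le m})\ne\emptyset$, i.e.\ exactly when $S$ has a common lower bound in $P$. This is the defining condition for $S$ to be a simplex of $K$, so $\mathcal N\bigl(\{\Delta(P_{\le m})\}\bigr)=K$ and $\Delta(P)\simeq K$, as claimed.

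I expect the one genuinely delicate point to be the clean invocation of the Nerve Lemma; everything else is bookkeeping about order ideals, meets and full subcomplexes. If one prefers to stay within the machinery of this paper rather than cite the Nerve Lemma, the same conclusion follows from Quillen's Theorem~A applied to the monotone map $S\mapsto\bigwedge S$ from the inclusion-ordered poset of faces of $K$ to $P^{op}$ --- well defined because each face of $K$, having a common lower bound, admits a meet by hypothesis, and order-reversing into $P$ because a larger face has a smaller meet. For $p\in P$ the associated comma category is the poset of nonempty subsets of $\{m\in\mathscr M(P)\mid m\ge p\}$ ordered by inclusion, which is nonempty (as $P$ is finite) and has a maximum, hence is contractible; so Theorem~A applies. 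Either route yields the asserted homotopy equivalence $K\simeq\Delta(P)$.
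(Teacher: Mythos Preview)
Your proof is correct. The Nerve Lemma argument is clean: the cover by the cones $\Delta(P_{\le m})$ works exactly as you say, and the identification of the nerve with $K$ is immediate.

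This is a genuinely different route from the paper's. The paper passes to the face poset $\mathscr F$ of $K$, observes that $\Delta(\mathscr F)$ is the barycentric subdivision of $K$, and then produces a contravariant Galois connection between $P$ and $\mathscr F$ via the pair $F(p)=\{m\in\mathscr M(P)\mid m\ge p\}$ and $G(X)=\bigwedge X$; since a Galois connection is an adjunction, Proposition~\ref{homotopyequiv} gives the homotopy equivalence directly. Your alternative route via Quillen's Theorem~A uses the same map $G$, but the paper's observation that $F$ and $G$ are adjoint is a shortcut: once you have an adjunction you get the homotopy equivalence without ever checking fibers. Conversely, your Nerve Lemma argument avoids the face poset and barycentric subdivision altogether, at the cost of importing a theorem not otherwise used in the paper. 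Both approaches are standard, and the Galois connection is arguably the minimal one given the tools already on the table.
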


Let $\mathscr M(B)$ be the set of maximal elements of $B\setminus \{1\}$ and
let $\Gamma(\mathscr M(B))$ be the \emph{commutation graph} of $\mathscr M(B)$,
that is, the graph whose vertex set is $\mathscr M(B)$ and whose edges are
pairs $(a,b)$ such that $ab=ba$.

\begin{Thm}\label{commutationgraph}
Let $B$ be a finite geometric left regular band.  Then $\Delta(\wh 0,\wh 1)$ is
homotopy equivalent to the clique complex $\Cliq(\Gamma(\mathscr M(B)))$ of the
commutation graph of the set $\mathscr M(B)$ of maximal elements of $B\setminus
\{1\}$.
\end{Thm}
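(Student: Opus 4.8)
The plan is to apply Rota's cross-cut theorem (Theorem~\ref{crosscutstated}) to the poset $P = B\setminus\{1\}$ under the $\R$-order, with the reversed order so that the maximal elements of this poset in the theorem statement correspond to $\mathscr M(B)$ as we have defined it. Concretely, order $B\setminus\{1\}$ by the opposite of $\leq$; then the maximal elements of $(B\setminus\{1\})^{op}$ are exactly the minimal elements of $B\setminus\{1\}$ under $\leq$, which are the elements of the minimal ideal $\wh 0$ — that is not what we want. So instead I would work directly with the $\R$-order as given: a subset of $B\setminus\{1\}$ ``has a common lower bound'' should be read in whatever orientation makes $\mathscr M(B)$ (maximal elements of $B\setminus\{1\}$, i.e. the coatoms of $B$) the vertex set of the cross-cut complex. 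The key verification is therefore: \emph{(i)} every subset of $B\setminus\{1\}$ that has a common upper bound in $B\setminus\{1\}$ has a join in $B\setminus\{1\}$, so that the dual of Rota's theorem applies; and \emph{(ii)} the order complex $\Delta(B\setminus\{1\})$ is exactly $\Delta(\wh 0,\wh 1)$, since $B[\wh 0,\wh 1) = B\setminus\{1\}$ by definition.

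For step \emph{(i)}, here is where geometricity enters. If $a_1,\dots,a_k \in B\setminus\{1\}$ have a common upper bound $b\neq 1$, then all $a_i \in b^{\uparrow}$, which is commutative — hence a lattice under $\leq$ with meet given by the product — by the definition of geometric. So $a_1\cdots a_k$ (the join in $b^{\uparrow}$, computed in any order since $b^{\uparrow}$ is commutative) is an upper bound for $\{a_1,\dots,a_k\}$ lying in $b^{\uparrow}\subseteq B\setminus\{1\}$, and it is the least such: any common upper bound $c\neq 1$ also lies in some $b'^{\uparrow}$ and a short argument using $\sigma$ and Proposition~\ref{DefLRB2} shows $c \geq a_1\cdots a_k$. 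Thus joins of upper-bounded subsets exist in $B\setminus\{1\}$. Dually, Rota's cross-cut theorem then says $\Delta(B\setminus\{1\})$ is homotopy equivalent to the complex on the coatoms $\mathscr M(B)$ whose simplices are subsets of $\mathscr M(B)$ admitting a common \emph{upper} bound below $1$.

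The last step is to identify that cross-cut complex with $\Cliq(\Gamma(\mathscr M(B)))$. A set $\{m_1,\dots,m_k\}\subseteq \mathscr M(B)$ forms a simplex of the cross-cut complex iff it has a common upper bound $b\neq 1$; I claim this is equivalent to pairwise commutativity, i.e. to $\{m_1,\dots,m_k\}$ being a clique in $\Gamma(\mathscr M(B))$. The forward direction: if all $m_i \leq b\neq 1$, then all $m_i\in b^{\uparrow}$, which is commutative, so the $m_i$ pairwise commute. The reverse direction is the crux and the main obstacle: given that the $m_i$ pairwise commute, I must produce a single common upper bound $\neq 1$. The natural candidate is the product $m_1 m_2 \cdots m_k$; pairwise commutativity should let me show this product is independent of the order and is an upper bound of each $m_i$ (using $m_i \cdot (m_1\cdots m_k) = m_1\cdots m_k$, which follows from $m_i m_i = m_i$ and the commutation relations), and it is $\neq 1$ because $\sigma(m_1\cdots m_k) = \sigma(m_1)\wedge\cdots\wedge\sigma(m_k) < \wh 1$ as each $m_i\neq 1$ forces $\sigma(m_i)<\wh 1$ (a coatom of $B$ has support strictly below the top by Lemma~\ref{Rhodeselem}). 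The only delicate point is confirming that pairwise commutativity of idempotents in a left regular band actually forces the whole product to be well-defined and order-independent — this is a small lemma about commuting idempotents generating a semilattice inside $B$, which I would isolate and prove directly from $xyx = xy$ and $xy = yx$. Granting that, the simplices of $\Cliq(\Gamma(\mathscr M(B)))$ and of the cross-cut complex coincide, and the homotopy equivalence follows from Rota's theorem.
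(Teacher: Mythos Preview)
Your overall strategy---apply Rota's cross-cut theorem to $P=B\setminus\{1\}$ under the $\R$-order---is exactly the paper's, but you have the orientation reversed throughout, and this is not merely cosmetic. In the $\R$-order the identity $1$ is the top of $B$, so $\mathscr M(B)$ consists of the \emph{maximal} elements of $B\setminus\{1\}$. Theorem~\ref{crosscutstated} as stated then requires that subsets with a common \emph{lower} bound have a \emph{meet}, and its simplices are subsets of $\mathscr M(B)$ with a common \emph{lower} bound. The hybrid you propose---maximal elements paired with common upper bounds and joins---is neither the theorem nor its dual; indeed, two distinct maximal elements of $B\setminus\{1\}$ can never share a common upper bound inside $B\setminus\{1\}$, so your cross-cut complex would be zero-dimensional.

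The concrete slip is the sentence ``If $a_1,\dots,a_k$ have a common upper bound $b\neq 1$, then all $a_i\in b^{\uparrow}$.'' An upper bound $b$ means $a_i\leq b$, which places $b\in a_i^{\uparrow}$, not the reverse. Geometricity says each $c^{\uparrow}$ is commutative, so it is a common \emph{lower} bound $c$ that forces $a_1,\dots,a_k\in c^{\uparrow}$ and hence their pairwise commutativity. With the direction corrected the argument collapses to the paper's one line: in a geometric left regular band, a subset of $B\setminus\{1\}$ has a common lower bound iff its elements pairwise commute, and then the product (in any order) is the meet; so the cross-cut complex on $\mathscr M(B)$ is exactly $\Cliq(\Gamma(\mathscr M(B)))$.

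Your final worry---whether pairwise commuting idempotents have an order-independent product that serves as a lower bound---is not delicate and does not need geometricity or the identity $xyx=xy$. In any semigroup, pairwise commutation of the factors already makes a finite product order-independent (adjacent transpositions generate the symmetric group), and then $m_i(m_1\cdots m_k)=m_1\cdots m_k$ is immediate from idempotence. So the product is automatically a common lower bound, and there is no separate lemma to isolate.
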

\begin{proof}
In a finite geometric left regular band, a subset $A\subseteq B$ has a lower
bound if and only if the elements of $A$ all mutually commute, in which case
$A$ has a meet, namely the product of all elements of $A$.  The result now
follows from Rota's cross-cut theorem (Theorem~\ref{crosscutstated}), and the
definition of the commutation graph.
\end{proof}

Theorem~\ref{commutationgraph} can be used to give another proof that if a left
regular band $B$ is right hereditary, then $\Bbbk B$ is hereditary.  This proof also
leads to an easy computation of the quiver.

\begin{Thm}\label{treethm}
Let $B$ be a finite left regular band that is right hereditary and let $\Bbbk$ be a
field.  Then $\Bbbk B$ is hereditary.  The quiver $Q(\Bbbk B)$ has vertex set
$\Lambda(B)$.  The number of edges from $X$ to $Y$ is zero unless $X<Y$.  If
$X<Y$, choose $e_Y\in B$ with $\sigma(e_Y) = Y$.  Then the number of edges from $X$ to $Y$ is one less
than the number of children of $e_Y$ with support greater than or equal to $X$.
\end{Thm}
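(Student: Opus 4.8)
The plan is to reduce the whole statement to the homotopy type of the order complexes $\Delta(X,Y)$ via Theorem~\ref{commutationgraph}, using the fact that right hereditariness forces these complexes to be discrete. Fix $X<Y$ in $\Lambda(B)$ and choose $e_Y\in B$ with $\sigma(e_Y)=Y$. First I would make the interval submonoid concrete. Realizing it as $B[Y]_{\geq X}$, one checks that $B[Y]=e_YB=\{c\in B\mid c\leq e_Y\}$ is exactly the principal downset of $e_Y$ in the $\R$-order, and, since $\sigma$ is order preserving, that $B[X,Y]=\{c\in B\mid c\leq e_Y,\ \sigma(c)\geq X\}$ is \emph{upward closed} inside $e_YB$. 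As $B$ is right hereditary its Hasse diagram is a rooted tree with root $\wh 1=1$, so $e_YB$ is the subtree hanging below $e_Y$, and $B[X,Y]$ is the connected subtree of it that contains the root $e_Y$ and is closed under passing to parents (the parent $p$ of any $c\in B[X,Y]$ satisfies $c<p\leq e_Y$ and $\sigma(p)\geq\sigma(c)\geq X$). Thus $B[X,Y]$ is itself right hereditary (as noted in Section~\ref{ss:geometricLRBs}), hence geometric, its Hasse diagram is the induced subtree of that of $B$ with root $e_Y$, and a little bookkeeping gives that (i) the children of $e_Y$ in $B[X,Y]$ are precisely the children of $e_Y$ in $B$ whose support is $\geq X$ — call this set $C$, so $|C|$ is the quantity in the statement — and (ii) in a rooted tree the maximal elements of the tree minus its root are the children of the root, so $\mathscr M(B[X,Y])=C$.

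The one genuinely new step is a commutation lemma: distinct elements $a,b\in C$ never commute. In any left regular band one has $ab\leq a$ (because $a(ab)=ab$) and $ba\leq b$, so if $ab=ba$ then $ab\leq a$ and $ab\leq b$, i.e.\ $a,b\in (ab)^{\uparrow}$. But $B$ is right hereditary, so $(ab)^{\uparrow}$ is a chain, forcing $a$ and $b$ to be comparable; this contradicts the fact that distinct children of $e_Y$ are incomparable (if $a\lessdot e_Y$, $b\lessdot e_Y$ and $a<b$ then $a<b<e_Y$, contradicting $a\lessdot e_Y$). Hence the commutation graph $\Gamma(\mathscr M(B[X,Y]))=\Gamma(C)$ has no edges, and its clique complex $\Cliq(\Gamma(C))$ is a discrete set of $|C|$ points.

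Now I would apply Theorem~\ref{commutationgraph} to the finite geometric left regular band $B[X,Y]$: the complex $\Delta(X,Y)$, which is the $\Delta(\wh 0,\wh 1)$ associated to $B[X,Y]$, is homotopy equivalent to $\Cliq(\Gamma(C))$, hence to $|C|$ points. Consequently $\til H^{m}(\Delta(X,Y),\Bbbk)=0$ for every $m\geq 1$ and every pair $X<Y$, and $\Delta(X,Y)$ has exactly $|C|$ connected components. Feeding the vanishing into Theorem~\ref{mainresult} (equivalently, into Theorem~\ref{globaldimension}) gives $\Ext^n_{\Bbbk B}(\Bbbk_X,\Bbbk_Y)=0$ for all $n\geq 2$ — the cases $X=Y$ and $X\not<Y$ contribute nothing beyond $\Ext^0$ — so $\mathrm{gl.}\dim\Bbbk B\leq 1$ and $\Bbbk B$ is hereditary. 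Feeding the count of connected components into Theorem~\ref{quiver} gives that $Q(\Bbbk B)$ has vertex set $\Lambda(B)$, has no arrows from $X$ to $Y$ unless $X<Y$, and in that case has exactly $|C|-1$ arrows, i.e.\ one less than the number of children of $e_Y$ with support $\geq X$; in particular this count is independent of the choice of $e_Y$ (different choices yield isomorphic $B[Y]$ compatibly with $\sigma$).

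I expect the only real effort to lie in the bookkeeping of the first paragraph — establishing cleanly that $B[X,Y]$ is the upward-closed subtree described and that its maximal proper elements are exactly the correctly counted children of $e_Y$. The commutation lemma is two lines, and the rest is a direct appeal to Theorems~\ref{commutationgraph}, \ref{mainresult} and~\ref{quiver}.
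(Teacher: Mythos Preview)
Your proposal is correct and follows essentially the same route as the paper's own proof: identify $\mathscr M(B[X,Y])$ with the children of $e_Y$ having support $\geq X$, show the commutation graph on these has no edges because the tree structure prevents distinct children from having a common lower bound, and then invoke Theorem~\ref{commutationgraph} together with Theorems~\ref{globaldimension} and~\ref{quiver}. Your version is simply more explicit about the bookkeeping (the upward-closed subtree description of $B[X,Y]$) and phrases the no-edges argument via ``$(ab)^{\uparrow}$ is a chain'' rather than ``no common lower bound,'' which amounts to the same thing.
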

\begin{proof}
If $X<Y$,  then $B[X,Y]$ is also right hereditary and $\mathscr M(B[X,Y])$
consists of the children of $e_Y$ with support greater than or equal to $X$.  Since the Hasse diagram of $B[X,Y]$ is
a tree, no two elements of $\mathscr M(B[X,Y])$ have a common lower
bound and hence $\Gamma(\mathscr M(B[X,Y]))$ has no edges, and so in particular
is its own clique complex.  Thus $\til H^0(\Delta(X,Y))=|\mathscr M(B[X,Y])|-1$
and $\til H^n(\Delta(X,Y))=0$ for all $n\geq 1$ by
Theorem~\ref{commutationgraph}.  The result now follows immediately from
Theorem~\ref{globaldimension}.
\end{proof}

Theorem~\ref{treethm} covers the algebras of nearly all the left regular bands
considered by K.~Brown in~\cite{Brown1}, except the hyperplane semigroups.  It also
covers the interval greedoid left regular bands of Bj\"orner~\cite{bjorner2}.

\begin{Cor}\label{freelrb}
The algebra $\Bbbk F_n$ is hereditary over any field $\Bbbk$.  The quiver $Q(\Bbbk F_n)$ has
vertex set the subsets of $\{1,\ldots,n\}$.  If $X\supsetneq Y$, then there are
$|X\setminus Y|-1$ edges from $X$ to $Y$.  There are no other edges.
\end{Cor}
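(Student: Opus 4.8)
The plan is to derive Corollary~\ref{freelrb} directly from Theorem~\ref{treethm} applied to the free left regular band $F_n$, using the combinatorial description of $F_n$ recalled in the excerpt. First I would recall that $F_n = F(\{1,\ldots,n\})$ is right hereditary: the Hasse diagram of the $\R$-order on $F(A)$ (after deleting loops) is a rooted tree with root the empty word $1$, since an injective word $w$ of length $\ell \geq 1$ covers exactly the unique injective word obtained by deleting its last letter (this is visible in Figure~\ref{figure: free lrb} and follows from the observation in Section~\ref{ss:geometricLRBs} that the free left regular band is right hereditary). Hence Theorem~\ref{treethm} applies and immediately gives that $\Bbbk F_n$ is hereditary and that $Q(\Bbbk F_n)$ has vertex set $\Lambda(F_n)$, which the excerpt identifies with the power set $P(\{1,\ldots,n\})$; under the convention that $\Lambda$ is ordered by inclusion with union as join, $X < Y$ in the support lattice means $X \subsetneq Y$ as subsets — but since the theorem's arrows go from $X$ to $Y$ with $X<Y$ in $\Lambda(B)$ and arrows in the quiver point in the direction that will eventually match the statement ``$X\supsetneq Y$ gives $|X\setminus Y|-1$ edges'', I must be careful to track which ordering convention on $\Lambda(F_n)$ the statement of the corollary uses. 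The corollary writes ``$X\supsetneq Y$'', so it is using the inclusion order on subsets directly, and $X < Y$ in $\Lambda(F_n)$ (the support lattice ordered so that larger ideals are higher) corresponds to $X \supsetneq Y$ as sets; I would state this reconciliation explicitly at the start.

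Next I would compute the edge count. Fix $X \supsetneq Y$, equivalently $X < Y$ in $\Lambda(F_n)$, and choose $e_Y \in F_n$ with $\sigma(e_Y) = Y$; concretely $e_Y$ is any injective word whose content (alphabet) is $Y$. By Theorem~\ref{treethm} the number of edges from $X$ to $Y$ is one less than the number of children of $e_Y$ (in the $\R$-order Hasse diagram of $F_n$) whose support is $\geq X$, i.e., whose support (a subset of $\{1,\ldots,n\}$) contains $X$. The children of the injective word $e_Y$ are exactly the injective words $e_Y a$ obtained by appending a single new letter $a \notin Y$; such a child has support $Y \cup \{a\}$. The condition that this support contains $X$ amounts to $X \subseteq Y \cup \{a\}$, i.e., $X \setminus Y \subseteq \{a\}$, i.e., $a$ is the unique element of $X \setminus Y$ (recall $X \supsetneq Y$, so $X\setminus Y \neq \emptyset$). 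Wait — I need to double-check: $X \supsetneq Y$ forces $|X \setminus Y| \geq 1$; if $|X\setminus Y| = 1$ then there is exactly one allowable child, giving $1 - 1 = 0$ edges, and indeed $|X\setminus Y| - 1 = 0$; if $|X \setminus Y| \geq 2$, then no single appended letter $a$ can make $Y \cup \{a\} \supseteq X$, so there are $0$ children with large enough support, giving $0 - 1 = -1$ edges, which is nonsense. So I have the direction of the set inequality backwards relative to the corollary's claim, and I must instead be in the regime where $X$ is the smaller set. Let me redo: in $\Lambda(F_n)$ with union as join, the bottom is $\emptyset$ and the top is the full set; $X < Y$ in $\Lambda(F_n)$ means $X \subsetneq Y$ as subsets. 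So ``$X \supsetneq Y$'' in the corollary must be read with the quiver arrows of Theorem~\ref{treethm} reinterpreted, OR the corollary is implicitly using the reverse-inclusion order that makes $\Lambda(F_n)$ look like the picture, and the reader should match $X<Y$ in $\Lambda$ with $X \supsetneq Y$ as sets. Taking the latter: $X<Y$ in $\Lambda(F_n)$ $\iff$ $X \supsetneq Y$ as sets, $e_Y$ has content $Y$, and a child $e_Y a$ (with $a \notin Y$) has content $Y \cup \{a\}$ which we need to be $\geq X$ in $\Lambda$, i.e., $\supseteq X$ as ideals i.e. $\subseteq X$ as... no. The cleanest route: I would avoid the sign confusion by working with whatever order makes ``support $\geq X$'' in Theorem~\ref{treethm} equivalent to ``content of child $\subseteq X$ as sets''; then the children of $e_Y$ with content contained in $X$ and strictly containing $Y$ correspond bijectively to elements of $X \setminus Y$, there are $|X\setminus Y|$ of them, and the edge count is $|X \setminus Y| - 1$. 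I would write this paragraph carefully with one fixed convention stated up front, citing the identification $\Lambda(F(A)) \cong P(A)$ from Section~\ref{ss:LRBs1} and the word problem for $F(A)$.

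The main obstacle, as the scratch-work above shows, is purely bookkeeping: keeping the two dualizations straight — the support lattice $\Lambda(F_n)$ is conventionally drawn and ordered so that $\widehat{1} = B$ is on top, which means larger principal left ideals sit higher, and one must decide once and for all whether this corresponds to the inclusion or reverse-inclusion order on the content sets, then propagate that choice consistently through the phrase ``children of $e_Y$ with support $\geq X$'' in Theorem~\ref{treethm}. Once the convention is pinned down, the identification of the children of an injective word (append one fresh letter), the count of those with the required support, and the final subtraction of one are all immediate. I would also remark that the ``no other edges'' clause is automatic from Theorem~\ref{treethm}, since the number of edges from $X$ to $Y$ is declared to be zero unless $X$ and $Y$ are comparable in the required direction, and in the comparable case the formula $|X \setminus Y| - 1$ is always nonnegative precisely because comparability forces $X \setminus Y \neq \emptyset$. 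Finally I would note this recovers Brown's theorem (\cite[Theorem~13.1]{Saliola}) with a transparent proof and an explicit quiver, as already advertised after Theorem~\ref{firstrightheredproof}.
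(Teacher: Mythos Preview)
Your approach is correct and identical to the paper's: apply Theorem~\ref{treethm} to $F_n$ (which is right hereditary) and count the children of a word $e_Y$ with content $Y$ whose support lies above $X$. To resolve the bookkeeping you flagged: since the semilattice operation on $\Lambda(F(A))\cong P(A)$ is \emph{union}, the lattice order is reverse inclusion, so $X<Y$ in $\Lambda$ means $X\supsetneq Y$ as sets and ``support $\geq X$'' means ``content $\subseteq X$''; then the relevant children are exactly $e_Y x$ with $x\in X\setminus Y$, giving $|X\setminus Y|-1$ edges, precisely as in the paper's two-line proof.
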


The quiver of $\Bbbk F_3$ is depicted in Figure \ref{fig:F3quiver}.
\begin{figure}[htb]
\begin{gather*}
\xymatrix@l@M=1pt{
  & \{a,b,c\}\ar@/^7ex/[ddd] \ar@/_7ex/[ddd] \ar@/_1.5ex/[ddl] \ar@/^1.5ex/[rdd]
\ar@<-0.4ex>@/^4ex/[dd] \\
\{a,b\} \ar@/_2ex/[ddr] & \ \{a,c\} \ar@<0.4ex>@/_4ex/[dd] & \{b,c\} \ar@/^2ex/[ddl] \\
\{a\} & \{b\} \ & \{c\} \\
 & \emptyset
}
\end{gather*}
\caption{The quiver of $\Bbbk F(\{a,b,c\})$, cf. Figure \ref{figure: free lrb}.}
\label{fig:F3quiver}
\end{figure}

\begin{proof}
The free left regular band is right hereditary.  If $X\supsetneq Y$ and $w$ is a
word with support $Y$, then the children of $w$ with support greater than or equal to $X$ are the
words $wx$ with $x\in X\setminus Y$.  This completes the proof.
\end{proof}

Let us generalize the above result to Karnofsky-Rhodes expansions.

\begin{Cor}
Let $L$ be a finite lattice with monoid generating set $A$ and let $\Bbbk$ be a
field.  Let $\wh L_A$ be the Karnofsky-Rhodes expansion of $L$ with respect to
$A$.  Then $\Bbbk\wh L_A$ is hereditary.  The quiver of $\Bbbk\wh L_A$ has vertex set
$L$.  The number of edges from $X$ to $Y$ is zero unless $X<Y$, in which case
it is one less than the number of elements $a\in A$ such that $X\leq Y\wedge
a<Y$.
\end{Cor}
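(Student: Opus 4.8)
The plan is to derive the corollary from Theorem~\ref{treethm}. Recall from Section~\ref{ss:KarnofskyRhodesExpansion} that $\wh L_A$ is realized as the set of reduced words over $A$ with product $vw=\reduce(vw)$, and that $\eta_L\colon\wh L_A\to L$ is the support map, so $\Lambda(\wh L_A)=L$. The first step is to verify that $\wh L_A$ is right hereditary (this is asserted in Section~\ref{ss:geometricLRBs}; let me recall why). For reduced words $u=a_1\cdots a_m$ and $v$, one checks that $u\leq v$ in the $\R$-order if and only if $v$ is a prefix of $u$: since $v$ is reduced, its letters survive the reduction of $vu$ unchanged, so $\reduce(vu)$ begins with $v$, and $vu=u$ then forces $v$ to be a prefix of $u$ (the reverse implication is an easy direct computation of $\reduce(vu)$). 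Consequently the elements of $\wh L_A$ above $u$ are exactly its prefixes $1,\,a_1,\,a_1a_2,\,\ldots,\,a_1\cdots a_m$, so $u$ is covered by its length-$(m-1)$ prefix and by nothing else; hence the Hasse diagram of $\wh L_A$ is a tree rooted at the empty word, i.e.\ $\wh L_A$ is right hereditary. Theorem~\ref{treethm} now gives that $\Bbbk\wh L_A$ is hereditary, that its quiver has vertex set $L$, and that for $X<Y$ the number of arrows $X\to Y$ is one less than the number of children of any chosen $e_Y\in\wh L_A$ with $\sigma(e_Y)=Y$ having support $\geq X$.

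The second step is to count these children. Fix a reduced word $e_Y=a_1\cdots a_n$ with $[e_Y]_L=Y$ (one exists since $\eta_L$ is surjective). By the description of covers above, the children of $e_Y$ are precisely the words $e_Y\cdot a$ with $a\in A$ such that $e_Y a\neq e_Y$, and $e_Y a=a_1\cdots a_n a$ is reduced and different from $e_Y$ exactly when appending $a$ is a transition, i.e.\ $[e_Y a]_L=Y\wedge a<Y$ (here we write $a$ also for its image $[a]_L$ in $L$). Distinct such letters give distinct reduced words $a_1\cdots a_n a$, hence distinct children, so $a\mapsto e_Y a$ is a bijection from $\{a\in A\mid Y\wedge a<Y\}$ onto the set of children of $e_Y$; moreover the child $e_Y a$ has support $[e_Y a]_L=Y\wedge a$, so it has support $\geq X$ if and only if $X\leq Y\wedge a$. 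Therefore the children of $e_Y$ of support $\geq X$ are in bijection with $\{a\in A\mid X\leq Y\wedge a<Y\}$, and Theorem~\ref{treethm} yields that the number of arrows from $X$ to $Y$ is $|\{a\in A\mid X\leq Y\wedge a<Y\}|-1$, as claimed. That this count does not depend on the choice of $e_Y$ is automatic, and visibly so from the final formula.

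The only point that calls for genuine care is the prefix characterization of the $\R$-order on $\wh L_A$ — in essence the observation that reducing $vu$ leaves the letters of a reduced word $v$ untouched — together with keeping straight the several orders in play (the lattice order on $L$, the opposite conventions on the support lattice, and the $\R$-order on $\wh L_A$); the rest is bookkeeping on top of Theorem~\ref{treethm}. As a consistency check this recovers Corollary~\ref{freelrb}: with $L=P(\{1,\ldots,n\})$ ordered by reverse inclusion, $A=\{1,\ldots,n\}$ and $a\mapsto\{a\}$, the meet is union and $X<Y$ means $X\supsetneq Y$, so $X\leq Y\wedge a<Y$ becomes $a\in X\setminus Y$, giving $|X\setminus Y|-1$ arrows.
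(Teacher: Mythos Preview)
Your proof is correct and follows essentially the same route as the paper: apply Theorem~\ref{treethm} after noting that $\wh L_A$ is right hereditary, then identify the children of a reduced word $e_Y$ with support $\geq X$ as the words $e_Ya$ with $X\leq Y\wedge a<Y$. The paper's proof is terser, simply asserting the right-hereditary property (stated without proof in Section~\ref{ss:geometricLRBs}) and the description of children; you usefully supply the prefix characterization of the $\R$-order that underlies both claims.
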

\begin{proof}
Again, we have $\wh L_A$ is right hereditary.  If $w$ is a reduced word with
support $Y$, then the children of $w$ with support greater than or equal to $X$ are the words $wa$ such that $Y\wedge
a<Y$ and $Y\wedge a\geq X$.
\end{proof}

One similarly has that the Rhodes expansion of a lattice is right hereditary
and one can explicitly write down its quiver.  The number of edges from $X$ to
$Y$ when $X<Y$ is one less than the number of elements $Z\geq X$, which are
covered by $Y$.

\begin{Rmk}
One can alternatively prove Theorem~\ref{treethm} via a counting argument using
the description above of the quiver and Gabriel's theorem.
\end{Rmk}

\subsection{Free partially commutative left regular bands}
We prove that the global dimension of a free partially commutative left regular
band is the Leray number of the clique complex of the corresponding graph.  This gives a new interpretation of the Leray number of a clique complex in terms of non-commutative algebra.

\begin{Thm}\label{partiallycommmain}
Let $\Gamma=(V,E)$ be a finite graph and $\Bbbk$ a commutative ring with unit.
Then, for $W\subsetneq U\subseteq V$, we have
\begin{align*}
    \Ext^n_{\Bbbk B(\Gamma)}(\Bbbk_U,\Bbbk_W)=\til H^{n-1}(\Cliq(\Gamma[U\setminus W]),\Bbbk)
\end{align*}
for $n\geq 1$.
\end{Thm}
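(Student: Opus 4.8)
The plan is to deduce the formula from the main theorem (Theorem~\ref{mainresult}), the description of the interval submonoids of $B(\Gamma)$, and Theorem~\ref{commutationgraph}. Since $\Gamma$ is finite, $B(\Gamma)$ is a finite left regular band, so Theorem~\ref{mainresult} applies; and since the support lattice of $B(\Gamma)$ is $P(V)$ ordered by reverse inclusion, the hypothesis $W\subsetneq U$ says exactly that $U<W$ in $\Lambda(B(\Gamma))$. Taking $X=U$ and $Y=W$ in Theorem~\ref{mainresult}, I get
\begin{align*}
    \Ext^n_{\Bbbk B(\Gamma)}(\Bbbk_U,\Bbbk_W)=\til H^{n-1}(\Delta(U,W),\Bbbk)\qquad (n\geq 1),
\end{align*}
where $\Delta(U,W)$ is the order complex of the interval subsemigroup $B(\Gamma)[U,W)$. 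So it suffices to show that $\Delta(U,W)$ is homotopy equivalent to $\Cliq(\Gamma[U\setminus W])$, since a homotopy equivalence induces isomorphisms on reduced cohomology with coefficients in any commutative ring.

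First I would rewrite $\Delta(U,W)$ using the isomorphism $B(\Gamma)[U,W]\cong B(\Gamma[U\setminus W])$ from Section~\ref{s:LRBExamples} (applicable since $W\subsetneq U$). Setting $\Gamma''=\Gamma[U\setminus W]$ and $C=B(\Gamma'')$, and using that deleting the identity of $C$ produces the ideal $C[\wh 0,\wh 1)$, this isomorphism (which preserves identities, hence restricts to a $\leq_\R$-poset isomorphism on the nonidentity elements) identifies $B(\Gamma)[U,W)$ with $C\setminus\{1\}$, so $\Delta(U,W)$ is the order complex $\Delta(\wh 0,\wh 1)$ of $C$. Because $C$ is a free partially commutative left regular band it is geometric (Section~\ref{ss:geometricLRBs}), so Theorem~\ref{commutationgraph} yields a homotopy equivalence $\Delta(\wh 0,\wh 1)\simeq\Cliq(\Gamma(\mathscr M(C)))$, where $\mathscr M(C)$ is the set of maximal elements of $C\setminus\{1\}$ and $\Gamma(\mathscr M(C))$ its commutation graph.

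The last step is to check that $\Gamma(\mathscr M(C))$ is literally $\Gamma''$. Identifying $C$ with the monoid of pairs $(S,\mathcal O)$ as in Theorem~\ref{wordproblem}, the identity is the $\leq_\R$-maximum, and every nonidentity element $a=(S,\mathcal O)$ satisfies $a\leq_\R v_1$, where $v_1$ is the first vertex of a topological sort of $(\ov{\Gamma''}[S],\mathcal O)$: indeed $v_1\cdot a=a$ because $v_1$ is a source of $\mathcal O$, and $a=v_1$ precisely when $|S|=1$. Hence $\mathscr M(C)$ is exactly the set of vertex generators $U\setminus W$, and these are pairwise distinct since they have distinct supports. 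Finally, two generators $v,w$ commute in $C$ if and only if $\{v,w\}$ is an edge of $\Gamma''$: one direction is the defining presentation of $B(\Gamma'')$, and for the other, if $\{v,w\}$ is not an edge of $\Gamma''$ then it is an edge of $\ov{\Gamma''}$ and Theorem~\ref{wordproblem} gives $\mathcal O(vw)\neq\mathcal O(wv)$, so $vw\neq wv$. Thus $\Cliq(\Gamma(\mathscr M(C)))=\Cliq(\Gamma'')=\Cliq(\Gamma[U\setminus W])$, and combining with the displayed identity finishes the proof.

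I do not anticipate a genuine obstacle: Theorems~\ref{mainresult} and~\ref{commutationgraph} carry the weight. The only points requiring care are the bookkeeping with the reverse-inclusion order on $\Lambda(B(\Gamma))$ (so that $\Bbbk_U$ and $\Bbbk_W$ really sit in the range $U<W$ where Theorem~\ref{mainresult} produces reduced cohomology), and the verification that $\mathscr M(C)$ consists exactly of the vertex generators, which is what makes the commutation graph equal to $\Gamma[U\setminus W]$ on the nose rather than merely up to homotopy.
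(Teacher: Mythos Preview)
Your proposal is correct and follows essentially the same argument as the paper: apply Theorem~\ref{mainresult}, reduce via the interval-submonoid isomorphism $B(\Gamma)[U,W]\cong B(\Gamma[U\setminus W])$, and then invoke Theorem~\ref{commutationgraph} after identifying $\mathscr M(C)$ with the vertex set and its commutation graph with $\Gamma[U\setminus W]$. The paper's own proof differs only cosmetically, doing a without-loss-of-generality reduction to $W=\emptyset$, $U=V$ rather than carrying the general interval through, and it asserts the identification of $\mathscr M(C)$ and its commutation graph without the justification you supply.
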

\begin{proof}
One has that $B(\Gamma)[U,W]=B(\Gamma[U\setminus W])$.  Thus we may assume
without loss of generality that $W=\emptyset$ and $U=V$.  The maximal elements
of $B(\Gamma)\setminus \{1\}$ are the elements of $V$.  The commutation graph
for this set is exactly $\Gamma$.  Since $B(\Gamma)$ is a geometric left
regular band, we conclude $\Delta(\wh 0,\wh 1)$ is homotopy equivalent to
$\Cliq(\Gamma)$ by Theorem~\ref{commutationgraph}.  The theorem now follows from Theorem~\ref{mainresult}.
\end{proof}

We present two immediate corollaries.  The first characterizes the free
partially commutative left regular bands with a hereditary $\Bbbk$-algebra.

\begin{Cor}
\label{FPLBgldim}
If $\Bbbk$ is a field and $\Gamma$ a finite graph, then the global dimension of
$\Bbbk B(\Gamma)$ is the $\Bbbk$-Leray number $L_\Bbbk(\Cliq(\Gamma))$. In particular,
$\Bbbk B(\Gamma)$ is hereditary if and only if $\Gamma$ is a chordal graph.
\end{Cor}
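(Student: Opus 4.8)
The plan is to read off both $\mathrm{gl.}\dim \Bbbk B(\Gamma)$ and $L_\Bbbk(\Cliq(\Gamma))$ from Theorem~\ref{partiallycommmain} and then to observe that the two quantities are characterised by one and the same vanishing condition.

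\emph{Step 1: express the global dimension via flag complexes of induced subgraphs.} Since $\Bbbk$ is a field, the simple $\Bbbk B(\Gamma)$-modules are precisely the $\Bbbk_U$, $U\in\Lambda(B(\Gamma))=P(V)$, and (as recalled before Theorem~\ref{globaldimension}) $\mathrm{gl.}\dim \Bbbk B(\Gamma)$ is the least integer $d$ for which $\Ext^n_{\Bbbk B(\Gamma)}(\Bbbk_U,\Bbbk_W)=0$ for all $n>d$ and all $U,W$. By Theorem~\ref{mainresult} the $\Ext$-spaces with $U=W$ or with $U\not<W$ in $\Lambda(B(\Gamma))$ vanish in every positive degree, while by Theorem~\ref{partiallycommmain} one has $\Ext^n_{\Bbbk B(\Gamma)}(\Bbbk_U,\Bbbk_W)=\til H^{n-1}(\Cliq(\Gamma[U\setminus W]),\Bbbk)$ whenever $W\subsetneq U\subseteq V$. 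So $\mathrm{gl.}\dim \Bbbk B(\Gamma)$ is the least $d$ such that $\til H^{i}(\Cliq(\Gamma[U\setminus W]),\Bbbk)=0$ for all $i\geq d$ and all $W\subsetneq U\subseteq V$.

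\emph{Step 2: match this with the Leray number.} I would then note that, as $W$ runs over subsets of $U$ and $U$ over subsets of $V$ subject to $W\subsetneq U$, the set $U\setminus W$ runs over exactly the nonempty subsets of $V$ (take $W=\emptyset$ to realise a prescribed nonempty set). Using the identity $\Cliq(\Gamma)[S]=\Cliq(\Gamma[S])$, together with the fact that $\Cliq(\Gamma)[\emptyset]$ has vanishing reduced cohomology in all nonnegative degrees, the condition obtained in Step~1 rewrites as: $\til H^i(\Cliq(\Gamma)[S],\Bbbk)=0$ for all $i\geq d$ and all $S\subseteq V$. By definition this holds exactly for $d\geq L_\Bbbk(\Cliq(\Gamma))$, whence $\mathrm{gl.}\dim \Bbbk B(\Gamma)=L_\Bbbk(\Cliq(\Gamma))$.

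\emph{Step 3: the hereditary case.} Finally, $\Bbbk B(\Gamma)$ is hereditary iff $\mathrm{gl.}\dim\Bbbk B(\Gamma)\leq 1$, i.e. $L_\Bbbk(\Cliq(\Gamma))\leq 1$; since $\Cliq(\Gamma)$ is a flag complex with $1$-skeleton $\Gamma$, Proposition~\ref{chordalgraph} identifies this with $\Gamma$ being chordal. I do not expect a genuine obstacle here: the whole argument is bookkeeping on top of Theorem~\ref{partiallycommmain}. The only places needing care are the ordering convention on $\Lambda(B(\Gamma))=P(V)$ (reverse inclusion, so that the pairs $X<Y$ appearing in Theorem~\ref{globaldimension} are exactly the pairs $W\subsetneq U$ of Theorem~\ref{partiallycommmain}) and the verification that the degenerate cases ($U=W$, and $W=\emptyset$) contribute nothing to either side.
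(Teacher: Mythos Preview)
Your proposal is correct and is precisely the argument the paper has in mind: the corollary is stated as ``immediate'' from Theorem~\ref{partiallycommmain}, and your Steps~1--3 simply spell out the bookkeeping (via Theorem~\ref{globaldimension}, the identity $\Cliq(\Gamma)[S]=\Cliq(\Gamma[S])$, and Proposition~\ref{chordalgraph}) that makes it so. Your attention to the reverse-inclusion ordering on $\Lambda(B(\Gamma))$ and to the degenerate cases is appropriate and nothing is missing.
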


Our next corollary computes the quiver of the algebra of a free partially commutative left
regular band.

\begin{Cor}
Let $\Gamma=(V,E)$ be a finite graph.
The quiver of $\Bbbk B(\Gamma)$ has vertex set the power set of $V$. If $U\supsetneq
W$, then the number of edges from $U$ to $W$ is one less than the number of
connected components of $\Gamma[U\setminus W]$.  There are no other edges.
\end{Cor}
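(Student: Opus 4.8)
The plan is to read this off from Theorem~\ref{partiallycommmain}, or equivalently from Theorem~\ref{quiver} combined with Theorem~\ref{commutationgraph}. First I would recall, from the analysis of $B(\Gamma)$ following Theorem~\ref{wordproblem}, that the support lattice $\Lambda(B(\Gamma))$ is the power set $P(V)$ ordered by reverse inclusion, with the support map sending a word to its set of letters. Hence by Theorem~\ref{quiver} the vertex set of $Q(\Bbbk B(\Gamma))$ is $P(V)$, and an arrow from $U$ to $W$ can occur only when $U<W$ in $\Lambda(B(\Gamma))$, that is, only when $U\supsetneq W$ as subsets of $V$; this already gives the final assertion that there are no other edges.

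Next, for $U\supsetneq W$ the number of arrows from $U$ to $W$ equals $\dim_\Bbbk\Ext^1_{\Bbbk B(\Gamma)}(\Bbbk_U,\Bbbk_W)$ by definition of the Gabriel quiver. Specializing Theorem~\ref{partiallycommmain} to $n=1$ gives $\Ext^1_{\Bbbk B(\Gamma)}(\Bbbk_U,\Bbbk_W)\cong\til H^0(\Cliq(\Gamma[U\setminus W]),\Bbbk)$. It then remains only to note two elementary facts: the reduced $0$-th cohomology of a nonempty simplicial complex has dimension one less than the number of its connected components; and the clique complex $\Cliq(\Gamma[U\setminus W])$ has the same $1$-skeleton, hence the same set of connected components, as the graph $\Gamma[U\setminus W]$. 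Combining these, the number of arrows from $U$ to $W$ is one less than the number of connected components of $\Gamma[U\setminus W]$, which completes the proof.

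If one prefers to bypass Theorem~\ref{partiallycommmain}, the same conclusion follows from Theorem~\ref{quiver} together with the geometric description: $B(\Gamma)[U,W]\cong B(\Gamma[U\setminus W])$ is a geometric left regular band whose maximal non-identity elements are precisely the vertices of $\Gamma[U\setminus W]$, with commutation graph $\Gamma[U\setminus W]$ itself, so Theorem~\ref{commutationgraph} gives a homotopy equivalence $\Delta(U,W)\simeq\Cliq(\Gamma[U\setminus W])$, and one counts connected components as above. I do not anticipate any genuine obstacle here, since the statement is a formal corollary of the main computation; the only points needing routine care are the orientation-reversing identification of $\Lambda(B(\Gamma))$ with $P(V)$ and the bookkeeping of the ``$-1$'' distinguishing reduced from unreduced $0$-cohomology.
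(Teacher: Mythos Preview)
Your proposal is correct and matches the paper's approach: the paper presents this as an immediate corollary of Theorem~\ref{partiallycommmain} without giving a separate proof, and your argument supplies exactly the routine details (specializing to $n=1$, identifying $\dim_\Bbbk\til H^0$ with one less than the number of connected components, and observing that $\Cliq(\Gamma[U\setminus W])$ and $\Gamma[U\setminus W]$ have the same components). The alternative route via Theorem~\ref{quiver} and Theorem~\ref{commutationgraph} that you sketch is also in the spirit of the paper, since that is precisely how Theorem~\ref{partiallycommmain} itself is proved.
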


Corollary~\ref{freelrb} can be recovered from these results
by specializing to the case that $\Gamma$ has no edges.

It is easy to see that if $\Gamma$ is \emph{triangle-free}, that is, has no
$3$-element cliques, then $\Cliq(\Gamma)=\Gamma$ and so $L_\Bbbk(\Cliq(\Gamma))=2$ unless
$\Gamma$ is a forest (in which case $\Gamma$ is chordal). This provides a natural infinite family of finite
dimensional algebras of global dimension $2$.

It is known that $L_\Bbbk(\Cliq(\Gamma))$
is bounded by the minimal number of chordal graphs needed to cover $\Gamma$~\cite[Theorem 13]{Woodroofe}.  If $C_n$ is the cycle with $n$~nodes and $P_n$ is
the path with $n$~nodes, then it is known that
\begin{align*}
    L_\Bbbk(\Cliq(\ov C_n))=L_\Bbbk(\Cliq(\ov P_n)) = \left\lfloor \frac{n-2}{3}\right\rfloor +1
\end{align*}
for $n\geq 3$~\cite[Proposition 9]{Woodroofe}.  If
$\Gamma_1,\Gamma_2$ are two graphs, then
\begin{align*}
    L_\Bbbk(\Cliq(\Gamma_1\ast \Gamma_2))=L_\Bbbk(\Cliq(\Gamma_1))+L_\Bbbk(\Cliq(\Gamma_2))
\end{align*} where $\ast$ denotes the join of graphs; see~\cite[Lemma 8]{Woodroofe}.
If $\ov \Gamma$ is chordal,
then $L_\Bbbk(\Cliq(\Gamma))$ is the maximal size of an induced matching in $\ov
\Gamma$~\cite[Corollary 18]{Woodroofe}.
If $\Gamma$ is planar, $L_\Bbbk(\Cliq(\Gamma))\leq 3$~\cite[Proposition 23]{Woodroofe}.


\subsection{Hyperplane face monoids}
We return to the setting of Section \ref{ss:hyperplanefacemonoids}.
Recall that $\AAA$ denotes a central hyperplane arrangement in a
$d$-dimensional real vector space, $\LLL$ its intersection lattice, and $\FFF$
its monoid of faces. Without loss of generality, we can suppose that the
intersection of all the hyperplanes in $\AAA$ is the origin: otherwise quotient
the vector space by this intersection; the resulting monoid of faces is
isomorphic to $\FFF$.

We argue that $\Delta(\wh0,\wh1)$ is a $(d-1)$-sphere. Note that the $\R$-order on
$\FFF$ can be described geometrically as $y \leq x$ if and only if $x \subseteq
\overline y$, where $\overline y$ denotes the set-theoretic closure of $y$.
This establishes an order-reversing bijection between the faces $\FFF$ and the
cells of the regular cell decomposition $\varSigma$ obtained by intersecting
the hyperplane arrangement with a sphere centered at the origin. The dual of
$\varSigma$ is the boundary of a polytope $Z$ (a zonotope, actually), and so
the poset of faces of $Z$ is isomorphic to $\FFF$ \cite[Section
2E]{DiaconisBrown1}. Since the order complex of the poset of faces of a
polytope is the barycentric subdivision of the polytope, it follows that
$\Delta(\wh0,\wh1)$ is a $(d-1)$-sphere.

This argument also applies to $\Delta(X,Y)$ for $X \leq Y$ in $\LLL$.
Indeed, $\FFF[X,Y]$ is the face monoid of the hyperplane arrangement in $X$
obtained by intersecting $X$ with the hyperplanes $H \in \AAA$ containing $Y$
but not $X$. It follows that $\Delta(X,Y)$ is a sphere of dimension
$\dim(X)-\dim(Y)-1$. Consequently, we recover Lemma~8.3 of
\cite{Saliolahyperplane}.

\begin{Prop}
\label{ext-spaces}
For $X, Y \in \LLL$ and $n \geq 0$,
\[
\Ext^n_{\Bbbk\FFF}(\Bbbk_X, \Bbbk_Y) \cong
\begin{cases}
\Bbbk, & \text{if } Y \subseteq X \text{ and } \dim(X) - \dim(Y) = n, \\
0, & \text{otherwise}.
\end{cases}
\]
\end{Prop}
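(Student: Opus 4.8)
The proof is a direct combination of Theorem~\ref{mainresult} with the geometric description of the order complexes $\Delta(X,Y)$ recalled just above. The one piece of bookkeeping to keep in mind throughout is that $\Lambda(\FFF)=\LLL$ is ordered by \emph{reverse} inclusion of subspaces: the hypothesis ``$Y\subseteq X$'' in the statement therefore corresponds to ``$X\leq Y$ in $\Lambda(\FFF)$'', and in particular strict inclusion $Y\subsetneq X$ corresponds to $X<Y$ in $\Lambda(\FFF)$. The plan is to split into the three cases of Theorem~\ref{mainresult} --- $X=Y$, $X<Y$ in $\LLL$, and everything else --- and check that each produces the asserted answer.

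First I would handle the two easy cases. If $X=Y$, Theorem~\ref{mainresult} gives $\Ext^0_{\Bbbk\FFF}(\Bbbk_X,\Bbbk_X)=\Bbbk$ and $\Ext^n_{\Bbbk\FFF}(\Bbbk_X,\Bbbk_X)=0$ for $n\geq 1$; since $\dim(X)-\dim(Y)=0$, this is exactly what is claimed. If $X$ and $Y$ are incomparable in $\LLL$, or if $X>Y$ in $\LLL$ (i.e.\ $X\subsetneq Y$), then Theorem~\ref{mainresult} gives $\Ext^n_{\Bbbk\FFF}(\Bbbk_X,\Bbbk_Y)=0$ for all $n$; and in both situations $Y\not\subseteq X$, so again this matches. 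The substantive case is $X<Y$ in $\LLL$, i.e.\ $Y\subsetneq X$ as subspaces. Here I would invoke the discussion preceding the proposition: $\FFF[X,Y]$ is the face monoid of the hyperplane arrangement in $X$ cut out by the hyperplanes of $\AAA$ that contain $Y$ but not $X$; because $Y\subsetneq X$ strictly, at least one such hyperplane exists, so this arrangement is nonempty. Via the order-reversing bijection between faces and cells of the induced regular cell decomposition of a sphere --- dually, the boundary complex of a zonotope --- the order complex $\Delta(X,Y)$ is the barycentric subdivision of that polytope boundary, hence homeomorphic to a sphere $S^{k}$ with $k=\dim(X)-\dim(Y)-1\geq 0$.

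It then remains only to read off the reduced cohomology. For every $k\geq 0$ (including $k=0$, where $S^0$ is a two-point space) one has $\til H^{i}(S^{k},\Bbbk)\cong\Bbbk$ if $i=k$ and $0$ otherwise. Hence, by Theorem~\ref{mainresult}, for $n\geq 1$,
\[
\Ext^n_{\Bbbk\FFF}(\Bbbk_X,\Bbbk_Y)=\til H^{n-1}(\Delta(X,Y),\Bbbk)\cong
\begin{cases}\Bbbk,& n-1=k,\\ 0,& \text{otherwise,}\end{cases}
\]
while $\Ext^0_{\Bbbk\FFF}(\Bbbk_X,\Bbbk_Y)=0$; since $n-1=k$ is equivalent to $n=\dim(X)-\dim(Y)$, this is precisely the claimed formula, and we recover \cite[Lemma~8.3]{Saliolahyperplane}. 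There is no real obstacle in this argument --- the genuine work is already contained in Theorem~\ref{mainresult} and in the fact, recalled above, that $\Delta(X,Y)$ is a sphere. The only points requiring a little care are the translation between subspace inclusion and the order on $\LLL$, and the observation that when $X<Y$ in $\LLL$ the complex $\Delta(X,Y)$ is nonempty, so that we land on an honest sphere $S^{k}$ with $k\geq 0$ rather than on the ``$(-1)$-sphere'' (the empty complex).
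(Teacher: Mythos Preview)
Your proof is correct and follows essentially the same route as the paper's own argument: apply Theorem~\ref{mainresult} together with the fact, established in the preceding paragraph, that $\Delta(X,Y)$ is a sphere of dimension $\dim(X)-\dim(Y)-1$. You spell out the case split and the reverse-inclusion bookkeeping more carefully than the paper does, but the substance is identical.
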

\begin{proof}
    We apply Theorem~\ref{mainresult}. Since $\Delta(X,Y)$ is a sphere of
    dimension $\dim(X)-\dim(Y)-1$, it follows that $\til
    H^{n-1}(\Delta(X,Y),\Bbbk)$ is $0$ unless $\dim(X)-\dim(Y)=n$,
    in which case it is $\Bbbk$.
\end{proof}
It follows that the quiver of $\Bbbk\FFF$ coincides with the Hasse diagram
of $\LLL$ ordered by reverse inclusion.
\begin{Cor}[{Saliola~\cite[Corollary~8.4]{Saliolahyperplane}}]
The quiver of $\Bbbk\FFF$ has vertex set $\LLL$.
The number of arrows from $X$ to $Y$ is zero unless
$Y \subsetneq X$ and $\dim(X)-\dim(Y)=1$, in which case
there is exactly one arrow.
\end{Cor}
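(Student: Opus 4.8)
The plan is to read this off directly from Proposition~\ref{ext-spaces} together with the definition of the Gabriel quiver recalled in Section~\ref{ss:quiverofaLRB}. Since $\Bbbk\FFF$ is split basic, its simple modules are the one-dimensional modules $\Bbbk_X$ with $X$ ranging over $\Lambda(\FFF) = \LLL$; hence the quiver has vertex set $\LLL$, and the number of arrows from the vertex $X$ to the vertex $Y$ equals $\dim_\Bbbk \Ext^1_{\Bbbk\FFF}(\Bbbk_X,\Bbbk_Y)$.

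First I would specialize Proposition~\ref{ext-spaces} to the case $n=1$. This gives $\Ext^1_{\Bbbk\FFF}(\Bbbk_X,\Bbbk_Y)\cong\Bbbk$ when $Y\subseteq X$ and $\dim(X)-\dim(Y)=1$, and $\Ext^1_{\Bbbk\FFF}(\Bbbk_X,\Bbbk_Y)=0$ in all other cases; note that $Y\subseteq X$ together with $\dim(X)-\dim(Y)=1$ already forces $Y\subsetneq X$. Taking $\dim_\Bbbk$ then yields exactly one arrow from $X$ to $Y$ in the stated case and none otherwise, which is precisely the assertion.

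There is no real obstacle here: the corollary is an immediate consequence of Proposition~\ref{ext-spaces}, which itself rests on the geometric identification of $\Delta(X,Y)$ with a sphere of dimension $\dim(X)-\dim(Y)-1$ followed by an application of Theorem~\ref{mainresult}. Alternatively, one could invoke Theorem~\ref{quiver}: for $X<Y$ in $\LLL$, that is, for $Y\subsetneq X$ as subspaces, the number of arrows is one less than the number of connected components of $\Delta(X,Y)$, and a sphere of dimension $\dim(X)-\dim(Y)-1\geq 0$ has two connected components exactly when $\dim(X)-\dim(Y)=1$ and is connected when $\dim(X)-\dim(Y)\geq 2$, giving one arrow in the former case and none in the latter. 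Finally, since the intersection lattice of a real central hyperplane arrangement is geometric with rank function the codimension, the condition ``$Y\subsetneq X$ and $\dim(X)-\dim(Y)=1$'' is exactly the covering relation in $\LLL$ ordered by reverse inclusion, so the quiver coincides with the Hasse diagram of $\LLL$, as anticipated in the sentence preceding the statement.
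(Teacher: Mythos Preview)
Your proposal is correct and matches the paper's approach: the corollary is stated there without proof, as an immediate consequence of Proposition~\ref{ext-spaces} together with the definition of the Gabriel quiver. Your alternative route via Theorem~\ref{quiver} and the component count of spheres is also valid, but the direct reading from $\Ext^1$ is what the paper has in mind.
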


In \cite{Saliolahyperplane} a set of quiver relations for $\Bbbk\FFF$ was
described: for each interval of length two in $\LLL$ take the sum of all paths
of length two in the interval. It was also shown that $\Bbbk\FFF$ is a Koszul
algebra and that its Koszul dual algebra is isomorphic to the incidence algebra
of the intersection lattice $\LLL$.

\subsection{Complex hyperplane face monoids}
We show that the situation for complex hyperplane arrangements is similar to that of real hyperplane arrangements.  Things are slightly more complicated in this setting because interval submonoids of complex hyperplane face monoids are not again complex hyperplane face monoids.  So we have to exploit much more the PL structure of the cell complex associated to the arrangement.

Fix a complex hyperplane arrangement $\mathcal A$ in $\mathbb C^d$ with augmented intersection lattice $\LLL$.
  The main technical result we shall need is the following.

\begin{Prop}\label{isasphere}
Let $X,Y\in \LLL$ with $X<Y$.  Then $\Delta(X,Y)$ is a sphere of dimension $\dim_{\mathbb R} X-\dim _{\mathbb R}Y-1$.
\end{Prop}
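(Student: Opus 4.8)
The plan is to exhibit $\Delta(X,Y)$ as (the barycentric subdivision of) the link of a single cell in a PL cell decomposition of a sphere, and then to quote the standard fact that the link of a $k$-cell in a PL $n$-sphere is a PL $(n-k-1)$-sphere. First I would set up the ambient geometry. As recalled above (and in \cite{BjornerZiegler1992}), the sets $\tau\inv(F)$ for $F\in\FFF$ are the relative interiors of the cones of a complete polyhedral fan in $\mathbb C^d=\mathbb R^{2d}$; intersecting this fan with the unit sphere $S^{2d-1}$ produces a regular — and, being polytopal, PL — CW decomposition $\varSigma$ of $S^{2d-1}$ whose face poset is the opposite of the $\R$-order on $\FFF$, the identity face corresponding to the empty cell. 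Write $C_F=\tau\inv(F)\cap S^{2d-1}$. Each augmented hyperplane $H\in\mathcal A_{\mathrm{aug}}$ is a union of cones of the fan, hence so is every $Z\in\LLL$; therefore the cells $C_F\subseteq Z$ span a subcomplex $\varSigma_Z$ of $\varSigma$, and since $Z\cap S^{2d-1}$ is the unit sphere $S(Z)$ of $Z$, this $\varSigma_Z$ is a PL decomposition of $S(Z)\cong S^{\dim_{\mathbb R}Z-1}$, with face poset the opposite of the $\R$-order on $B_{\geq Z}=\{F\in\FFF\mid\sigma(F)\geq Z\}$ (Section~\ref{ss:intervalsubmonoids}). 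It is also immediate from the definitions that $\tau\inv(F)$ spans the subspace $\sigma(F)$, so $\dim C_F=\dim_{\mathbb R}\sigma(F)-1$.

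Next I would identify the poset $B[X,Y)$ inside $\varSigma_X$. Choose $e\in\FFF$ with $\sigma(e)=Y$; since $X\leq Y$ we may take $e\in B_{\geq X}$, so that $C_e$ is a cell of $\varSigma_X$ of dimension $\dim_{\mathbb R}Y-1$. By Section~\ref{ss:intervalsubmonoids} the interval submonoid $B[X,Y]$ is (isomorphic to) the local submonoid $eB_{\geq X}$, so $B[X,Y)$ is identified, as a poset under the $\R$-order, with $eB_{\geq X}\setminus\{e\}$; an elementary computation in left regular bands gives $eB_{\geq X}=\{F\in B_{\geq X}\mid eF=F\}=\{F\in B_{\geq X}\mid F\leq_\R e\}$, with $\R$-order inherited from $B_{\geq X}$. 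Translating through the previous step — the $\R$-order being the opposite of the face order of $\varSigma_X$ — this says that $B[X,Y)$ is exactly the set of cells of $\varSigma_X$ that strictly contain $C_e$ in their closure, i.e. the face poset of the link $\mathrm{lk}(C_e,\varSigma_X)$. Hence $\Delta(X,Y)$ is the barycentric subdivision of $\mathrm{lk}(C_e,\varSigma_X)$, and in particular is homeomorphic to it.

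Finally, since $\varSigma_X$ is a PL decomposition of the sphere $S^{\dim_{\mathbb R}X-1}$ and $C_e$ is one of its cells, of dimension $\dim_{\mathbb R}Y-1$, the link $\mathrm{lk}(C_e,\varSigma_X)$ is a PL sphere of dimension $(\dim_{\mathbb R}X-1)-(\dim_{\mathbb R}Y-1)-1=\dim_{\mathbb R}X-\dim_{\mathbb R}Y-1$, which is the assertion; the degenerate case $Y=\wh1$ (where $C_e$ is empty and $\mathrm{lk}(C_e,\varSigma_X)=\varSigma_X$) is already covered by the first step. The main obstacle I anticipate is precisely the first step: one must check not merely that $\varSigma$ and its slices $\varSigma_Z$ are regular CW decompositions of spheres, but that they are PL — shellable or polytopal enough for the principle ``the link of a $k$-cell in an $n$-sphere is an $(n-k-1)$-sphere'' to apply. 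Once the covector fan of $\mathcal A$, and its restrictions to the subspaces $Z\in\LLL$, are recognized as honest polyhedral fans this is standard PL topology; the remaining identification of $B[X,Y)$ with a star/link is routine bookkeeping with the support map, using Lemma~\ref{Rhodeselem} to keep track of dimensions.
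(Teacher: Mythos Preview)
Your proposal is correct and follows essentially the same strategy as the paper: realize $\Delta(X,Y)$ as a link in a PL sphere decomposition of $S(X)$ and invoke the standard fact that links in PL spheres are PL spheres. The only organizational difference is that you work directly in the regular CW complex $\varSigma_X$ and take the link of the single cell $C_e$, whereas the paper passes to the barycentric subdivision $\Delta(\FFF_{\geq X}^{op})$ and takes the link of a maximal chain from the identity to a face $F$ with $\sigma(F)=Y$; your version is arguably a bit cleaner, since the identification $B[X,Y)\cong\text{face poset of }\mathrm{lk}(C_e,\varSigma_X)$ is immediate, while the paper must use maximality of the chain to rule out elements between $1$ and $F$. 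For the PL structure on $\varSigma_X$ the paper appeals to the subdividing real arrangement from \cite[Theorem~2.6]{BjornerZiegler1992} rather than your direct polyhedral-fan argument, but either justification suffices.
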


Assuming for the moment the proposition, we obtain the following analogues of the results from the real case.

\begin{Prop}
For $X, Y \in \LLL$ and $n \geq 0$,
\[
\Ext^n_{\Bbbk\FFF}(\Bbbk_X, \Bbbk_Y) \cong
\begin{cases}
\Bbbk, & \text{if } Y \subseteq X \text{ and } \dim_{\mathbb R}(X) - \dim_{\mathbb R}(Y) = n, \\
0, & \text{otherwise}.
\end{cases}
\]
\end{Prop}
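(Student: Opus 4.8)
The plan is to deduce this exactly as in the real case (Proposition~\ref{ext-spaces}), once Proposition~\ref{isasphere} is available. First I would recall that the partial order on the augmented intersection lattice $\LLL$ is reverse inclusion, so that $X<Y$ in $\LLL$ means $Y\subsetneq X$ as real subspaces of $\mathbb{C}^d=\mathbb{R}^{2d}$, and in that case $\dim_{\mathbb R}X-\dim_{\mathbb R}Y\geq 1$. Theorem~\ref{mainresult} then gives $\Ext^n_{\Bbbk\FFF}(\Bbbk_X,\Bbbk_Y)\cong\til H^{n-1}(\Delta(X,Y),\Bbbk)$ when $X<Y$ and $n\geq 1$, gives $\Bbbk$ when $X=Y$ and $n=0$, and gives $0$ in all remaining cases (in particular when $X<Y$ and $n=0$, where the right-hand side of the claimed formula also vanishes since $\dim_{\mathbb R}X-\dim_{\mathbb R}Y\geq 1>0$).

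Next I would invoke Proposition~\ref{isasphere}: for $X<Y$ the order complex $\Delta(X,Y)$ is a sphere of dimension $\dim_{\mathbb R}X-\dim_{\mathbb R}Y-1$. Since the reduced cohomology of an $m$-sphere with coefficients in $\Bbbk$ is $\Bbbk$ concentrated in degree $m$ and $0$ otherwise, the term $\til H^{n-1}(\Delta(X,Y),\Bbbk)$ equals $\Bbbk$ precisely when $n-1=\dim_{\mathbb R}X-\dim_{\mathbb R}Y-1$, that is, when $n=\dim_{\mathbb R}X-\dim_{\mathbb R}Y$, and $0$ otherwise. Combining this with the $X=Y$, $n=0$ case (where $\dim_{\mathbb R}X-\dim_{\mathbb R}Y=0=n$ as well) and translating $X\leq Y$ in $\LLL$ back into $Y\subseteq X$ of subspaces, one obtains exactly the stated case distinction.

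So the only real content sits upstream, in Proposition~\ref{isasphere}, and that is where the main obstacle lies: one must identify $\Delta(X,Y)$, the order complex of the interval subsemigroup $\FFF[X,Y)$ with its $\R$-order, with a sphere of the advertised dimension. In the real case this was immediate because $\FFF[X,Y]$ is again a hyperplane face monoid, hence the face poset of a zonotope boundary, whose order complex is a barycentric subdivision of a sphere. That shortcut is unavailable here, since interval submonoids of a complex hyperplane face monoid need not themselves be complex hyperplane face monoids; instead the argument for Proposition~\ref{isasphere} must work directly with the regular CW / PL structure of the cell complex on $S^{2d-1}$ attached to $\mathcal A$ (in the sense of~\cite{BjornerZiegler1992}) and cut it down along the subspaces $X$ and $Y$. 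Granting that proposition, the deduction of the present statement is pure bookkeeping with sphere cohomology and the reverse-inclusion convention.
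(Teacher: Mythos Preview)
Your proposal is correct and follows essentially the same approach as the paper: apply Theorem~\ref{mainresult}, invoke Proposition~\ref{isasphere} to identify $\Delta(X,Y)$ as a sphere of dimension $\dim_{\mathbb R}X-\dim_{\mathbb R}Y-1$, and read off the reduced cohomology. Your extra bookkeeping on the boundary cases ($X=Y$, $n=0$ and $X<Y$, $n=0$) and your remarks on why Proposition~\ref{isasphere} requires the PL argument rather than the real-case shortcut are accurate and match the paper's own discussion.
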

\begin{proof}
   This is an application of Theorem~\ref{mainresult}. Since $\Delta(X,Y)$ is a sphere of
    dimension $\dim_{\mathbb R}(X)-\dim_{\mathbb R}(Y)-1$ by Proposition~\ref{isasphere}, it follows that $\til
    H^{n-1}(\Delta(X,Y),\Bbbk)$ is $0$ except for when $\dim_{\mathbb R}(X)-\dim_{\mathbb R}(Y)=n$,
    in which case it is $\Bbbk$.
\end{proof}

An immediate consequence is that the quiver of $\Bbbk\FFF$ coincides with the Hasse diagram
of $\LLL$ ordered by reverse inclusion, as was the case for real hyperplane arrangements.
\begin{Cor}
The quiver of $\Bbbk\FFF$ has vertex set $\LLL$.
The number of arrows from $X$ to $Y$ is zero unless
$Y \subsetneq X$ and $\dim(X)-\dim(Y)=1$, in which case
there is exactly one arrow.
\end{Cor}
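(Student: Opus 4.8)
The plan is simply to read the quiver off the $\mathrm{Ext}^1$-groups, exactly as was done after Proposition~\ref{ext-spaces} in the real case. By Theorem~\ref{quiver} the quiver $Q(\Bbbk\FFF)$ has vertex set $\Lambda(\FFF)$, and for the face monoid of a complex arrangement $\Lambda(\FFF)$ is the augmented intersection lattice $\LLL$ ordered by reverse inclusion; this gives the vertex-set assertion. By the definition of the quiver, the number of arrows from $X$ to $Y$ equals $\dim_\Bbbk\Ext^1_{\Bbbk\FFF}(\Bbbk_X,\Bbbk_Y)$, so everything reduces to specializing the preceding proposition to $n=1$.

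That proposition gives $\Ext^n_{\Bbbk\FFF}(\Bbbk_X,\Bbbk_Y)\cong\Bbbk$ when $Y\subseteq X$ and $\dim_{\mathbb R}(X)-\dim_{\mathbb R}(Y)=n$, and $0$ otherwise. Taking $n=1$ therefore yields exactly one arrow $X\to Y$ when $Y\subsetneq X$ and $\dim_{\mathbb R}(X)-\dim_{\mathbb R}(Y)=1$, and no arrows in all other cases, which is the statement (with $\dim$ read as real dimension, as in the preceding proposition).

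The one point I would spell out explicitly -- the only cosmetic difference from the real case -- is that $\dim_{\mathbb R}(X)-\dim_{\mathbb R}(Y)=1$ is precisely the covering relation of $\LLL$ under reverse inclusion, so that the quiver coincides with the Hasse diagram of $\LLL$. This follows from the recalled facts that $\LLL$ is a semimodular lattice of length $2d$ and that the longest chain from $X$ to $Y$ in $\LLL$ has length $\dim_{\mathbb R}(X)-\dim_{\mathbb R}(Y)$: semimodularity forces all maximal chains between $X$ and $Y$ to have this common length, so $X\mapsto 2d-\dim_{\mathbb R}(X)$ is the rank function of $\LLL$ and $X$ is covered by $Y$ exactly when $\dim_{\mathbb R}(X)-\dim_{\mathbb R}(Y)=1$.

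There is no genuine obstacle in this corollary: all of the content lies upstream, in Theorem~\ref{mainresult} and in Proposition~\ref{isasphere} (that each $\Delta(X,Y)$ is a sphere of the stated dimension), from which the $\Ext$-computation of the preceding proposition -- and hence this corollary -- is pure bookkeeping, structurally identical to the real-arrangement case treated after Proposition~\ref{ext-spaces}.
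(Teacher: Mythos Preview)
Your proof is correct and matches the paper's approach: the paper states this corollary as ``an immediate consequence'' of the preceding $\Ext$-computation, and you have simply spelled out that immediate consequence by specializing to $n=1$. Your additional paragraph identifying the condition $\dim_{\mathbb R}(X)-\dim_{\mathbb R}(Y)=1$ with the covering relation of $\LLL$ is a helpful elaboration of the paper's remark that the quiver is the Hasse diagram, and is not strictly needed for the corollary as stated.
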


We now prove Proposition~\ref{isasphere}.
\begin{proof}[Proof of Proposition~\ref{isasphere}]
Viewing $\FFF^{op}$ as the face poset of a regular CW decomposition of $S^{2d-1}$, one has that $\FFF_{\geq X}^{op}$ is the subcomplex of $\FFF^{op}$ corresponding  to the intersection $X\cap S^{2d-1}$ (see~\cite[Theorem 3.5]{bjorner2} and the discussion preceding it). In~\cite[Theorem~2.6]{BjornerZiegler1992}, it is shown that there is a real hyperplane arrangement $\mathcal A'$ in $\mathbb R^{2d}=\mathbb C^d$ such that the regular CW complex decomposition of $S^{2d-1}$ induced by $\mathcal A'$ is a subdivision of the CW decomposition associated to $\mathcal A$.  Recalling that $X$ is a real subspace of $\mathbb C^d$, it follows that the subcomplex $X\cap S^{2d-1}$ has a subdivision coming from the real hyperplane arrangement $\mathcal A''=\{H\cap X\mid H\in \mathcal A'\ \text{and}\ X\nsubseteq H\}$ in $X$.  Thus $\FFF_{\geq X}^{op}$  is the face poset of a PL regular CW decomposition of $S^{\dim_{\mathbb R} X-1}$, cf.~\cite[Theorem~2.2.2]{OrientedMatroids1993}.   Therefore, the order complex $\Delta(\FFF_{\geq X}^{op})$ is a PL sphere of dimension $\dim_{\mathbb R} X-1$, cf.~\cite[Lemma~4.7.25]{OrientedMatroids1993}.

Fix $F\in \FFF$ with support $Y$.  Let $c$ be a maximal chain in $\FFF_{\geq X}^{op}$ from the identity to $F$.  It follows from Lemma~\ref{Rhodeselem} that the length of $c$ is the same as the length of the interval $[Y,\wh 1]$ of $\LLL$, which is $\dim_{\mathbb R} Y$. Next we observe that $\Delta(\FFF[X,Y)^{op})$ is the link of $c$ in $\Delta(\FFF_{\geq X}^{op})$ and thus is a PL sphere of dimension $\dim_{\mathbb R} X-\dim_{\mathbb R} Y-1$ by~\cite[Theorem~4.7.21(iv)]{OrientedMatroids1993}.  Since the order complex of a poset and its opposite are the same, we have $\Delta(X,Y)=\Delta(\FFF[X,Y))\cong S^{\dim_{\mathbb R} X-\dim_{\mathbb R} Y-1}$.
\end{proof}

\section{Proof of the Main Result: the Algebraic Component}
\label{s:mainresult}

We begin the proof of the main result. In this section, we develop the tools that will allow us to recast the
computation of $\Ext^n_{\Bbbk B}(\Bbbk_X,\Bbbk_Y)$ into one involving monoid cohomology and
classifying spaces. No knowledge of monoid cohomology or classifying spaces is
required to read this section.

Let $\Bbbk$ be a commutative ring with unit and fix a finite left regular band $B$.
We begin by describing certain projective modules for a left regular band algebra. The
most important case is when $\Bbbk$ is a field, but it is conceivable that the case
$\Bbbk=\mathbb Z$ may be of interest in the future and so we do not restrict
ourselves here. We use module unqualified to mean left module.

\subsection{Orthogonal idempotents}
\label{ss:orthogonalidempotents}
Many of the results of the second author~\cite{Saliola} from the case where $\Bbbk$
is a field generalize to any commutative ring with unit.  In particular, he
constructed a complete set of
orthogonal idempotents that are defined over $\mathbb ZB$.  In the case $\Bbbk$ is
a field, they are the primitive idempotents.  But they are useful in general
since they give us a decomposition of $\Bbbk B$ into a direct sum of projective
modules.

Fix for each $X\in \Lambda(B)$ an element $f_X$ with $X=Bf_X$.  Define $e_X$
recursively by $e_{\wh 0} = f_{\wh 0}$ and, for $X>\wh 0$,
\begin{equation}\label{defidempotents}
e_X = f_X\left(1-\sum_{Y<X}e_Y\right).
\end{equation}
Notice that, by induction, one can write
\begin{align*}
    e_X=\sum_{b\in B} c_bb
\end{align*}
with the $c_b$ integers such that $f_X\geq b$ for all $b$ with $c_b\neq 0$ and
the coefficient of $f_X$ in $e_X$ is $1$.

The following results are proved in Lemma~4.1 and Theorem~4.2 of~\cite{Saliola}
when $\Bbbk$ is a field, but the proofs make no use of the assumption that $\Bbbk$ is a field.

\begin{Thm}\label{primidempotentprops}
The elements $\{e_X\}_{X\in \Lambda(B)}$ enjoy the following properties:
\begin{enumerate}
\item if $b\in B$ and $X\in \Lambda(B)$ are such that $b\in \nup X$, then
    $be_X=0$;
\item $\{e_X\}_{X\in \Lambda(B)}$ is a complete set of orthogonal idempotents,
    that is, one has $e_Xe_Y = \delta_{X,Y}e_X$ and
    \begin{align*}
        \sum_{X\in \Lambda(B)} e_X=1;
    \end{align*}
\item if $\Bbbk$ is a field, then $e_X$ is a primitive idempotent.
\end{enumerate}
\end{Thm}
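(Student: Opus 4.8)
The plan is to follow~\cite[Lemma~4.1 and Theorem~4.2]{Saliola}, the point being to check that the hypothesis that $\Bbbk$ is a field is used nowhere except in part~(3). The engine of the argument is one identity: for every $c\in B$, writing $Z=\sigma(c)$,
\[
    c\sum_{Y\leq Z}e_Y=c .
\]
Indeed, \eqref{defidempotents} rearranges to $\sum_{Y\leq Z}e_Y=f_Z+(1-f_Z)\sum_{Y<Z}e_Y$, so multiplying on the left by $c$ gives $cf_Z+(c-cf_Z)\sum_{Y<Z}e_Y$; since $\sigma(f_Z)=Z=\sigma(c)$, Proposition~\ref{DefLRB2} gives $cf_Z=c$ and the second term dies. (I read \eqref{defidempotents} with the convention that an empty sum is $0$, so that it also covers $Z=\wh 0$.)

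Part~(1) I would prove by Noetherian induction on $X\in\Lambda(B)$; the case $X=\wh 0$ is vacuous since $\nup{\wh 0}=\emptyset$. For the step, fix $b$ with $\sigma(b)\not\geq X$ and put $c=bf_X$, so $\sigma(c)=\sigma(b)\wedge X=:Z$ with $Z<X$ (equality would force $\sigma(b)\geq X$). By \eqref{defidempotents}, $be_X=c-c\sum_{Y<X}e_Y$; if $Y<X$ and $Y\not\leq Z$ then $c\in\nup Y$, so $ce_Y=0$ by induction, and hence $c\sum_{Y<X}e_Y=c\sum_{Y\leq Z}e_Y=c$ by the displayed identity. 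Thus $be_X=0$.

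For part~(2), Property~(1) yields at once what I will call \emph{Case~A}: $e_Xe_Y=0$ whenever $Y\not\leq X$ --- writing $e_X=\sum_b c_b b$ with each $b\leq f_X$ (hence $\sigma(b)\leq X$), every such $b$ lies in $\nup Y$, so $be_Y=0$. Thus $e_Xe_Y=e_Ye_X=0$ for incomparable $X,Y$, and $e_Ye_X=0$ for $Y<X$. The remaining relations $e_X^2=e_X$ and $e_Xe_Y=0$ ($Y<X$) I would prove by induction on $X$: the inductive hypothesis together with Case~A makes $\{e_Z\}_{Z<X}$ an orthogonal family of idempotents, so $u:=\sum_{Y<X}e_Y$ satisfies $u^2=u$; also $e_Yf_X=e_Y$ for $Y<X$ (each basis element $b$ occurring in $e_Y$ has $\sigma(b)\leq Y<X$, hence $bf_X=b$), so $uf_X=u$. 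Then $(1-u)f_X(1-u)=f_X(1-u)=e_X$, so $e_X^2=f_X\left[(1-u)f_X(1-u)\right]=f_Xe_X=e_X$, and $ue_Y=e_Y$ for $Y<X$ gives $e_Xe_Y=f_Xe_Y-f_X(ue_Y)=0$. Finally $\sum_Xe_X=1$ is just the displayed identity at $c=1$, since $\sigma(1)=\wh 1$ and $Y\leq\wh 1$ for all $Y$.

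Part~(3) is the only place the field hypothesis is used. As recalled in Section~\ref{ss:quiverofaLRB}, $\sigma\colon\Bbbk B\to\Bbbk\Lambda(B)$ is the projection onto the semisimple quotient and $\Bbbk\Lambda(B)\cong\Bbbk^{\Lambda(B)}$. Applying $\sigma$ to the family $\{e_X\}$ gives pairwise orthogonal idempotents of $\Bbbk^{\Lambda(B)}$ summing to $1$; by \eqref{defidempotents} and Lemma~\ref{Rhodeselem} the coefficient of $f_X$ in $e_X$ is $1$ while every other basis element occurring has support strictly below $X$, so the $X$-coordinate of $\sigma(e_X)$ is $1$ and in particular $\sigma(e_X)\neq 0$. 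But $|\Lambda(B)|$ pairwise orthogonal nonzero idempotents summing to $1$ in $\Bbbk^{\Lambda(B)}$ must all be primitive, and an idempotent of a finite-dimensional algebra is primitive iff its image modulo the radical is; hence each $e_X$ is primitive. There is no serious obstacle here --- the one point that needs care is the induction in part~(2), whose step requires full orthogonality (not merely idempotency) of the $e_Z$ with $Z<X$: the cross terms for incomparable indices, which a naive downward induction in $\Lambda(B)$ never reaches, are exactly what Case~A (i.e.\ Property~(1)) provides.
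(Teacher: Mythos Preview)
Your proposal is correct and follows exactly the approach the paper takes: both you and the paper defer to Saliola's Lemma~4.1 and Theorem~4.2 and observe that the field hypothesis is only invoked for part~(3). The paper gives no further details beyond that citation, whereas you have reproduced the argument in full; the steps you give (the key identity $c\sum_{Y\le\sigma(c)}e_Y=c$, the Noetherian induction for~(1), Case~A plus the induction for~(2), and the lifting-through-the-radical argument for~(3)) are all sound.
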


The following is~\cite[Corollary~4.4]{Saliola}, but we must adapt the proof
since we are not assuming $\Bbbk$ is a field.

\begin{Cor}\label{basisofidempotents}
The set $\{be_{\sigma(b)}\mid b\in B\}$ is a basis of idempotents for $\Bbbk B$.
\end{Cor}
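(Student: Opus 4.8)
The plan is to establish that the $|B|\times|B|$ matrix expressing the family $\{be_{\sigma(b)}\mid b\in B\}$ in terms of the standard basis $\{b\mid b\in B\}$ of $\Bbbk B$ is triangular with $1$'s on the diagonal with respect to a suitable ordering of $B$; such a matrix is invertible over any commutative ring, and this yields at once that the family is a $\Bbbk$-basis (and, incidentally, that $b\mapsto be_{\sigma(b)}$ is injective).

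First I would fix $b\in B$, put $X=\sigma(b)$, and use the expansion $e_X=\sum_{c\in B}c_c\,c$ recorded just before Theorem~\ref{primidempotentprops}: the $c_c$ are integers, $f_X\geq c$ whenever $c_c\neq 0$, and the coefficient of $f_X$ in $e_X$ is $1$. For any $c$ with $c_c\ne0$ one has $\sigma(c)\le\sigma(f_X)=X=\sigma(b)$, so by Proposition~\ref{DefLRB2} the equality $bc=b$ holds exactly when $\sigma(c)\ge\sigma(b)$, i.e.\ when $\sigma(c)=X$; combined with $c\le f_X$ and $\sigma(c)=\sigma(f_X)$, this forces $c=f_X$, as observed in the proof of Lemma~\ref{Rhodeselem}. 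For every other $c$ occurring in $e_X$ one computes $\sigma(bc)=\sigma(b)\wedge\sigma(c)=\sigma(c)<X$. Since the coefficient of $f_X$ is $1$, this gives
\[
be_{\sigma(b)}=b+\sum_{\substack{b'\in B\\ \sigma(b')<\sigma(b)}}\mu_{b'}\,b'
\]
for suitable scalars $\mu_{b'}\in\Bbbk$. Choosing any total order $\le'$ on $B$ for which $\sigma(b_1)<\sigma(b_2)$ implies $b_1<'b_2$, the displayed formula shows that the transition matrix is lower unitriangular with respect to $\le'$; hence it has determinant $1$ and is invertible over $\Bbbk$. Therefore $\{be_{\sigma(b)}\mid b\in B\}$ is a $\Bbbk$-basis of $\Bbbk B$.

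It remains to check that these basis elements are idempotents. With $X=\sigma(b)$ as above, every $c$ occurring in the expansion of $e_X$ satisfies $\sigma(b)=X\ge\sigma(c)$, so $cb=c$ by Proposition~\ref{DefLRB2}, whence $e_Xb=\sum_c c_c\,cb=\sum_c c_c\,c=e_X$. Using that $e_X$ is idempotent (Theorem~\ref{primidempotentprops}(2)), we obtain $(be_X)^2=b(e_Xb)e_X=be_Xe_X=be_X$, as required.

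I do not anticipate a serious obstacle: the one point that needs care is that, since we are not assuming $\Bbbk$ is a field, we cannot invoke primitivity of the $e_X$ and must instead run the purely combinatorial unitriangularity argument together with the observation that a unitriangular integer matrix is invertible over every commutative ring with unit. The underlying support computations are routine consequences of Proposition~\ref{DefLRB2} and Lemma~\ref{Rhodeselem}.
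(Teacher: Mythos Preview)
Your proof is correct and follows essentially the same approach as the paper: you show the transition matrix from $\{b\}$ to $\{be_{\sigma(b)}\}$ is unitriangular (hence invertible over any commutative ring) and verify idempotency separately. The only cosmetic differences are that the paper orders $B$ compatibly with the $\R$-order rather than with $\sigma$, and it checks idempotency by observing that left multiplication by $b$ is a ring homomorphism $\Bbbk B\to\Bbbk bB$ rather than by your direct computation $e_Xb=e_X$.
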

\begin{proof}
Since the map $a\mapsto ba$ is a homomorphism $B\to bB$, it follows that left
multiplication by $b$ is a ring homomorphism $\Bbbk B\to \Bbbk bB$ and hence $be_{\sigma(b)}$
is an idempotent.  Consider the module homomorphism $\p\colon \Bbbk B\to \Bbbk B$ given by $b\mapsto
be_{\sigma(b)}$.  By the remarks before Theorem~\ref{primidempotentprops} and
the fact that $bf_{\sigma(b)}=b$, it follows that $be_{\sigma(b)}$ is an
integral linear combination of elements of $bB$ and that the coefficient of $b$
itself is $1$.  Thus if we order $B$ in a way compatible with the $\mathscr
R$-order, then the matrix of $\p$ is unitriangular and hence invertible over
any commutative ring with unit.  This establishes the corollary.
\end{proof}

\subsection{Sch\"utzenberger representations}
\label{ss:Schutz}
Next we recall the classical (left) Sch\"utzenberger representation associated
to an element $X\in \Lambda(B)$.  Let $L_X=\sigma\inv (X)$; it is called an
$\mathscr L$-class in the semigroup theory literature~\cite{CP}.  Define a
$\Bbbk B$-module structure on $\Bbbk L_X$ by putting, for $a\in B$ and $b\in L_X$,
\begin{align*}
    a\cdot b =
    \begin{cases}
        ab & \text{if}\ \sigma(a)\geq X\\
        0 & \text{else.}
    \end{cases}
\end{align*}

It is proved by the second author~\cite{Saliola} that $\Bbbk Be_X\cong \Bbbk L_X$ when
$\Bbbk$ is a field. The argument is easily adapted to the general case.  Namely,
using Corollary~\ref{basisofidempotents}, the proof of~\cite[Lemma
5.1]{Saliola} goes through to show that a $\Bbbk$-basis for $\Bbbk Be_X$ is the set
$\{be_X\mid \sigma(b)=X\}$. It is then shown in~\cite[Proposition 5.2]{Saliola}
that the map $\p\colon \Bbbk L_X\to \Bbbk Be_X$ given by $b\mapsto be_X$ for $b\in L_X$
is a $\Bbbk B$-module homomorphism.  Since it sends a basis to a basis, it is an
isomorphism.  Putting everything together we have the following theorem.

\begin{Thm}\label{Schutz}
Let $B$ be a left regular band and $\Bbbk$ a commutative ring with unit.  Then the
Sch\"utzenberger representations $\Bbbk L_X$ with $X\in \Lambda(X)$ are projective
and
\begin{align*}
    \Bbbk B\cong \bigoplus_{X\in \Lambda(B)} \Bbbk L_X.
\end{align*}
If $\Bbbk$ is a field, then this is the decomposition of $\Bbbk B$ into projective
indecomposables.
\end{Thm}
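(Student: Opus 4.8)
The plan is to establish the decomposition $\Bbbk B=\bigoplus_{X\in\Lambda(B)}\Bbbk Be_X$ of the left regular module and then identify each summand $\Bbbk Be_X$ with the Sch\"utzenberger representation $\Bbbk L_X$. The decomposition is immediate from Theorem~\ref{primidempotentprops}: the $e_X$ form a complete set of orthogonal idempotents, so $\Bbbk B=\bigoplus_X\Bbbk Be_X$, and each $\Bbbk Be_X$ is a direct summand of the free module $\Bbbk B$, hence projective. Transporting projectivity across the isomorphism $\Bbbk L_X\cong\Bbbk Be_X$ to be constructed then gives that each $\Bbbk L_X$ is projective.

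To build that isomorphism I would proceed in two steps. First, I would show that $\{be_X\mid b\in L_X\}$ is a $\Bbbk$-basis of $\Bbbk Be_X$. For the spanning assertion, take $b\in B$: if $\sigma(b)\not\geq X$ then $b\in\nup X$ and $be_X=0$ by Theorem~\ref{primidempotentprops}(1); if $\sigma(b)\geq X$, then from $f_X^2=f_X$ one gets $f_Xe_X=e_X$, so $be_X=(bf_X)e_X$ with $\sigma(bf_X)=\sigma(b)\wedge X=X$, i.e.\ $bf_X\in L_X$. Hence $\Bbbk Be_X$ is spanned by $\{be_X\mid b\in L_X\}$, and since this set is contained in the basis $\{ce_{\sigma(c)}\mid c\in B\}$ of $\Bbbk B$ furnished by Corollary~\ref{basisofidempotents}, its elements are $\Bbbk$-linearly independent and indexed injectively by $L_X$; so it is a basis of $\Bbbk Be_X$. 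Second, I would check that $\p\colon\Bbbk L_X\to\Bbbk Be_X$, $b\mapsto be_X$, is a map of left $\Bbbk B$-modules: for $a\in B$ and $b\in L_X$ one has $\sigma(ab)=\sigma(a)\wedge X$, which is $\geq X$ exactly when $\sigma(a)\geq X$; in that case $a\cdot b=ab\in L_X$ and $\p(a\cdot b)=abe_X=a\cdot\p(b)$, while if $\sigma(a)\not\geq X$ then $a\cdot b=0$ in $\Bbbk L_X$ and $ab\in\nup X$ gives $a\cdot\p(b)=abe_X=0$. Since $\p$ carries the basis $L_X$ of $\Bbbk L_X$ bijectively onto the basis $\{be_X\mid b\in L_X\}$ of $\Bbbk Be_X$, it is an isomorphism, and $\Bbbk B\cong\bigoplus_X\Bbbk L_X$ follows.

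For the last assertion, when $\Bbbk$ is a field Theorem~\ref{primidempotentprops}(3) says each $e_X$ is primitive, so $\Bbbk Be_X$ is indecomposable, whence $\Bbbk L_X$ is an indecomposable projective; thus $\Bbbk B=\bigoplus_X\Bbbk L_X$ is a decomposition into indecomposable projectives. The step requiring the most care is the spanning claim for $\{be_X\mid b\in L_X\}$ --- equivalently surjectivity of $\p$ --- precisely because we work over an arbitrary commutative ring and may not divide; the argument above avoids this by using only the integral identities $f_X^2=f_X$, $f_Xe_X=e_X$ and the vanishing $be_X=0$ for $b\in\nup X$. Everything else is bookkeeping, the substantive work having been done in Theorem~\ref{primidempotentprops} and Corollary~\ref{basisofidempotents}.
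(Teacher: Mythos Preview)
Your proof is correct and follows essentially the same approach as the paper: decompose $\Bbbk B=\bigoplus_X\Bbbk Be_X$ via the orthogonal idempotents, show $\{be_X\mid b\in L_X\}$ is a $\Bbbk$-basis of $\Bbbk Be_X$ using Corollary~\ref{basisofidempotents}, and verify that $b\mapsto be_X$ is a $\Bbbk B$-module isomorphism $\Bbbk L_X\to\Bbbk Be_X$. In fact you supply more detail than the paper, which simply cites~\cite[Lemma~5.1, Proposition~5.2]{Saliola} for the basis and homomorphism claims; your explicit use of $f_Xe_X=e_X$ to reduce $be_X$ to $(bf_X)e_X$ with $bf_X\in L_X$ is exactly the content of those cited arguments.
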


If $X\in \Lambda(B)$, there is a $\Bbbk$-algebra homomorphism $\rho_X\colon \Bbbk B\to
\Bbbk B_{\geq X}$ given by
\begin{align*}
    \rho_X(b) =
    \begin{cases}
        b & \text{if}\ \sigma(b)\geq X\\
        0 & \text{else.}
    \end{cases}
\end{align*}
This homomorphism allows us to consider any $\Bbbk B_{\geq X}$-module $M$ as a
$\Bbbk B$-module via the action $b \cdot m = \rho_X(b) m$ for all $b \in B$ and $m
\in M$.

The kernel of $\rho_X$ is the ideal $\Bbbk\nup X$.  If $Y\geq X$, then $\Bbbk L_Y$ is a
$\Bbbk B_{\geq X}$-module and is the corresponding projective module for $\Bbbk B_{\geq
X}$.  We thus obtain the following corollary of Theorem~\ref{Schutz}.

\begin{Cor}\label{niceprojectives}
If $X\in \Lambda(B)$, then $\Bbbk B_{\geq X}$ is a projective $\Bbbk B$-module and the
decomposition
\begin{align*}
    \Bbbk B=\Bbbk B_{\geq X}\oplus \bigoplus_{Y\ngeq X}\Bbbk L_Y
\end{align*}
holds. Consequently, any projective $\Bbbk B_{\geq X}$-module is a projective
$\Bbbk B$-module (via $\rho_X$).
\end{Cor}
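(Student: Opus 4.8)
The plan is to apply Theorem~\ref{Schutz} twice: to $B$ itself and to the left regular band $B_{\geq X}=B[X,\wh 1]$, which has support lattice the interval $[X,\wh 1]$ of $\Lambda(B)$ and whose $\mathscr L$-classes are precisely the sets $L_Y$ with $Y\geq X$, the support map being the restriction of $\sigma$.

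First I would use Theorem~\ref{Schutz}, together with the complete set of orthogonal idempotents $\{e_Y\}$ of Theorem~\ref{primidempotentprops}, to write $\Bbbk B=\bigoplus_{Y\in\Lambda(B)}\Bbbk Be_Y$ with $\Bbbk Be_Y\cong\Bbbk L_Y$. Setting $e=\sum_{Y\geq X}e_Y$, orthogonality of the $e_Y$ gives $e^2=e$ and hence $\Bbbk B=\Bbbk Be\oplus\Bbbk B(1-e)$, where $\Bbbk Be=\bigoplus_{Y\geq X}\Bbbk Be_Y\cong\bigoplus_{Y\geq X}\Bbbk L_Y$ and $\Bbbk B(1-e)=\bigoplus_{Y\ngeq X}\Bbbk Be_Y\cong\bigoplus_{Y\ngeq X}\Bbbk L_Y$; both are direct summands of $\Bbbk B$, hence projective $\Bbbk B$-modules. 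Next, Theorem~\ref{Schutz} applied to $B_{\geq X}$ gives $\Bbbk B_{\geq X}\cong\bigoplus_{Y\geq X}\Bbbk L_Y$ as $\Bbbk B_{\geq X}$-modules. It then remains to check that restricting the $\Bbbk B_{\geq X}$-module $\Bbbk L_Y$ (for $Y\geq X$) along $\rho_X$ reproduces the native $\Bbbk B$-module structure on $\Bbbk L_Y$: for $a\in B$ and $b\in L_Y$ one has $a\cdot b=\rho_X(a)b$, which equals $ab$ when $\sigma(a)\geq Y$ and $0$ otherwise, since $\sigma(a)\geq Y$ already forces $\sigma(a)\geq X$. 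Consequently $\Bbbk B_{\geq X}\cong\bigoplus_{Y\geq X}\Bbbk L_Y\cong\Bbbk Be$ as $\Bbbk B$-modules, which shows $\Bbbk B_{\geq X}$ is a projective $\Bbbk B$-module and, under this identification, gives the decomposition $\Bbbk B=\Bbbk B_{\geq X}\oplus\bigoplus_{Y\ngeq X}\Bbbk L_Y$.

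For the last assertion, a projective $\Bbbk B_{\geq X}$-module $P$ is a direct summand of a free module $(\Bbbk B_{\geq X})^{(I)}$; restricting scalars along $\rho_X$, this free module becomes a direct sum of copies of the $\Bbbk B$-projective module $\Bbbk B_{\geq X}$, hence is projective over $\Bbbk B$, and a direct summand of a projective module is projective. I expect the only delicate point to be the module-structure bookkeeping in the middle step — confirming that restriction along $\rho_X$ carries the Sch\"utzenberger $\Bbbk B_{\geq X}$-module $\Bbbk L_Y$ onto the Sch\"utzenberger $\Bbbk B$-module $\Bbbk L_Y$ — but this is immediate once one notes that the constraint $\sigma(a)\geq Y$ subsumes $\sigma(a)\geq X$ whenever $Y\geq X$.
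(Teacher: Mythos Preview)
Your proof is correct and follows the same approach the paper intends: the paper leaves this corollary essentially unproved, merely noting just before it that for $Y\geq X$ the Sch\"utzenberger module $\Bbbk L_Y$ is the corresponding projective for $\Bbbk B_{\geq X}$, and then stating the result as an immediate consequence of Theorem~\ref{Schutz}. Your write-up simply fills in the details of that deduction, including the explicit idempotent $e=\sum_{Y\geq X}e_Y$ and the verification that restriction along $\rho_X$ recovers the native $\Bbbk B$-action on $\Bbbk L_Y$.
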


As a corollary, it follows that we can compute $\Ext$ in either $\Bbbk B$ or
$\Bbbk B_{\geq X}$ for $\Bbbk B_{\geq X}$-modules.

\begin{Cor}\label{computeExt}
Let $X\in \Lambda(B)$ and let $M,N$ be $\Bbbk B_{\geq X}$-modules.  Then
\begin{align*}
    \Ext^n_{\Bbbk B}(M,N)\cong \Ext^n_{\Bbbk B_{\geq X}}(M,N)
\end{align*}
for all $n\geq 0$.
\end{Cor}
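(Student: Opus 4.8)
The plan is to run the standard change-of-rings computation of $\Ext$, exploiting that $\rho_X\colon \Bbbk B\to \Bbbk B_{\geq X}$ is a \emph{surjective} $\Bbbk$-algebra homomorphism and that, by Corollary~\ref{niceprojectives}, projective $\Bbbk B_{\geq X}$-modules remain projective as $\Bbbk B$-modules. I do not anticipate a genuine obstacle here; the only point that needs a moment's care is the identification of Hom-spaces, and that is exactly where the surjectivity of $\rho_X$ is used.

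First I would record the elementary observation that if $M'$ and $N$ are $\Bbbk B_{\geq X}$-modules, viewed as $\Bbbk B$-modules via $\rho_X$, then $\Hom_{\Bbbk B}(M',N)=\Hom_{\Bbbk B_{\geq X}}(M',N)$ as $\Bbbk$-modules. A $\Bbbk B_{\geq X}$-linear map is automatically $\Bbbk B$-linear for the pulled-back actions; conversely, if $f$ is $\Bbbk B$-linear and $c\in \Bbbk B_{\geq X}$, choosing $b\in \Bbbk B$ with $\rho_X(b)=c$ (possible since $\rho_X$ is onto) gives $f(cm)=f(\rho_X(b)m)=\rho_X(b)f(m)=cf(m)$, so $f$ is $\Bbbk B_{\geq X}$-linear.

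Next I would choose a projective resolution $P_\bullet\to M$ of $M$ over $\Bbbk B_{\geq X}$. Since exactness of a complex depends only on its underlying diagram of $\Bbbk$-modules, $P_\bullet\to M$ remains exact as a complex of $\Bbbk B$-modules via $\rho_X$, and by Corollary~\ref{niceprojectives} each $P_i$ is $\Bbbk B$-projective; hence $P_\bullet\to M$ is a projective resolution of $M$ over $\Bbbk B$ as well. Computing $\Ext$ from this one resolution,
\begin{gather*}
    \Ext^n_{\Bbbk B}(M,N)=H^n\bigl(\Hom_{\Bbbk B}(P_\bullet,N)\bigr),\\
    \Ext^n_{\Bbbk B_{\geq X}}(M,N)=H^n\bigl(\Hom_{\Bbbk B_{\geq X}}(P_\bullet,N)\bigr),
\end{gather*}
and the observation from the previous paragraph, applied termwise to the $\Bbbk B_{\geq X}$-modules $P_i$ and $N$, identifies these two cochain complexes, hence their cohomology in every degree. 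This yields $\Ext^n_{\Bbbk B}(M,N)\cong \Ext^n_{\Bbbk B_{\geq X}}(M,N)$ for all $n\geq 0$.

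Alternatively, one could phrase the whole argument as an instance of the general lemma that a surjection of rings along which projective modules pull back to projective modules induces such isomorphisms on $\Ext$ of modules over the quotient; the content of that lemma in our situation is precisely Corollary~\ref{niceprojectives} together with the Hom-space identification above.
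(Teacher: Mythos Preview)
Your proof is correct and follows exactly the same approach as the paper: take a projective resolution of $M$ over $\Bbbk B_{\geq X}$, invoke Corollary~\ref{niceprojectives} to see it is also a projective resolution over $\Bbbk B$, and compute $\Ext$ from it. The paper's version is terser---it simply declares the result ``immediate'' after this observation---whereas you have helpfully spelled out the Hom-space identification $\Hom_{\Bbbk B}(P_i,N)=\Hom_{\Bbbk B_{\geq X}}(P_i,N)$ that makes the conclusion go through.
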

\begin{proof}
Corollary~\ref{niceprojectives} implies any projective resolution of $M$ over
$\Bbbk B_{\geq X}$ is also a projective resolution over $\Bbbk B$.  The result is now
immediate.
\end{proof}

\subsection{Computation of $\Ext^n_{\Bbbk B}(\Bbbk_X,\Bbbk_Y)$}
\label{ss:mainresult}

If $X\in \Lambda(B)$, let $\Bbbk_X$ be the trivial $\Bbbk B_{\geq X}$-module.  It is
then a $\Bbbk B$-module via $\rho_X$.  There is a surjective homomorphism
$\varepsilon_X\colon \Bbbk L_X\to \Bbbk_X$ sending each element of $L_X$ to $1$.  If $\Bbbk$
is a field, then the modules $\Bbbk_X$ form a complete set of non-isomorphic simple
$\Bbbk B$-modules and $\varepsilon_X\colon \Bbbk L_X\to \Bbbk_X$ is the projective
cover~\cite{Brown1,Saliola,rrbg}.  In general, the simple $\Bbbk B$-modules are the
modules of the form $(\Bbbk/\mathfrak m)_X$ where $\mathfrak m$ is a maximal ideal
of $\Bbbk$, as follows from the results of~\cite{myirreps}.

We begin with a construction of a projective resolution of the modules $\Bbbk_X$
with $X\in \Lambda(B)$. Recall that these are the simple $\Bbbk B$-modules when $\Bbbk$
is a field.

\begin{Prop}\label{standardbar}
Let $B$ be a finite left regular band and let {$X\in \Lambda(B)$}.
Define a chain complex $C_\bullet(B,X)$ by putting $C_n(B,X)$ to be the free
$\Bbbk B_{\geq X}$-module on the set {$B[X,\wh 1)^n=(B_{\geq X}\setminus \{1\})^n$}.
Denote a basis element by $[s_0|\cdots|s_{n-1}]$.  When $n=0$, the unique basis
element is denoted $[\,]$.  The augmentation $\varepsilon_X\colon C_0(B,X)\to
\Bbbk_X$ sends $[\,]$ to $1$.  Define
\begin{align*}
    d_n\colon C_n(B,X)\to C_{n-1}(B,X),
\end{align*}
for $n\geq 1$, by
\begin{align*}
d_n([s_0|\cdots|s_{n-1}]) =
    &s_0[s_1|\cdots|s_{n-1}]\\
    &+\sum_{i=1}^{n-1}(-1)^{i}[s_0|\cdots|s_{i-1}s_i|\cdots |s_{n-1}] \\
    &+(-1)^n[s_0|\cdots|s_{n-2}]
\end{align*}
Then $C_\bullet(B,X)\to \Bbbk_X$ is a projective resolution of $\Bbbk_X$ as a $\Bbbk B$-module.
\end{Prop}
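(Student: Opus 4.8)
The plan is to recognize $C_\bullet(B,X)$ together with $\varepsilon_X$ as the normalized bar resolution of the trivial module over the monoid algebra $\Bbbk B_{\ge X}$, and then to upgrade ``free (hence projective) over $\Bbbk B_{\ge X}$'' to ``projective over $\Bbbk B$'' using Corollary~\ref{niceprojectives}. Write $M=B_{\ge X}$; thus $\Bbbk_X$ is the trivial $\Bbbk M$-module, $\varepsilon_X\colon\Bbbk M\to\Bbbk_X$ is the augmentation, and $\overline{\Bbbk M}:=\Bbbk M/\Bbbk\cdot 1$ is $\Bbbk$-free on $M\setminus\{1\}=B[X,\wh 1)$ because the inclusion $\Bbbk\cdot 1\hookrightarrow\Bbbk M$ is split by $\varepsilon_X$. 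Under the identification of $C_n(B,X)$ with the free $\Bbbk M$-module $\Bbbk M\otimes_\Bbbk\overline{\Bbbk M}^{\otimes n}$ sending $[s_0|\cdots|s_{n-1}]$ to $1\otimes\overline{s_0}\otimes\cdots\otimes\overline{s_{n-1}}$, the three blocks of terms in the definition of $d_n$ are exactly the alternating sum of the face maps of the bar construction: the first block is $d_0$ (absorb $s_0$ into the module coefficient), the middle block multiplies adjacent entries $s_{i-1}s_i$, and the last deletes $s_{n-1}$ (using $\varepsilon_X(s_{n-1})=1$). Hence $d_n\circ d_{n+1}=0$ by the simplicial identities, so $C_\bullet(B,X)$ is a complex.

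For acyclicity of $C_\bullet(B,X)\xrightarrow{\varepsilon_X}\Bbbk_X$ I would use the standard contracting homotopy of the bar resolution. On the unnormalized complex $\widetilde C_\bullet$ — the free $\Bbbk M$-module on $M^n$ with the same differential — the $\Bbbk$-linear maps $h_n\colon\widetilde C_n\to\widetilde C_{n+1}$ given on the $\Bbbk$-basis by $h_n(m[s_1|\cdots|s_n])=[m|s_1|\cdots|s_n]$, together with $h_{-1}\colon\Bbbk_X\to\widetilde C_0$, $1\mapsto[\,]$, satisfy $d_{n+1}h_n+h_{n-1}d_n=\mathrm{id}$; this is a routine computation (see, e.g., \cite{CartanEilenberg}). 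Since $h_\bullet$ maps the degenerate subcomplex $D_\bullet\subseteq\widetilde C_\bullet$ (spanned by tuples with some entry equal to $1$) into itself and $C_\bullet(B,X)=\widetilde C_\bullet/D_\bullet$, the homotopy descends to $C_\bullet(B,X)$, so $C_\bullet(B,X)\to\Bbbk_X$ is exact. Equivalently, one invokes the normalization theorem: the normalized chain complex of a simplicial module is chain homotopy equivalent to the unnormalized one.

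It remains to see that each $C_n(B,X)$ is projective as a $\Bbbk B$-module, where $\Bbbk M$-modules are regarded as $\Bbbk B$-modules via $\rho_X\colon\Bbbk B\to\Bbbk M=\Bbbk B_{\ge X}$. Being free over $\Bbbk B_{\ge X}$ it is projective over $\Bbbk B_{\ge X}$, hence projective over $\Bbbk B$ by Corollary~\ref{niceprojectives}; and $d_n$ and $\varepsilon_X$, being $\Bbbk B_{\ge X}$-linear, are $\Bbbk B$-linear via $\rho_X$. Therefore $C_\bullet(B,X)\to\Bbbk_X$ is a projective resolution of $\Bbbk_X$ over $\Bbbk B$, as claimed.

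There is no deep obstacle: the content is entirely (a) the identification with the bar resolution and (b) Corollary~\ref{niceprojectives}, which is the single place where the left regular band structure enters (through the Sch\"utzenberger decomposition of $\Bbbk B$). The only point needing care is that the complex really is well defined over $B_{\ge X}$: for the middle faces one needs $s_{i-1}s_i\ne 1$, which holds because $\sigma(s_{i-1}s_i)=\sigma(s_{i-1})\wedge\sigma(s_i)\le\sigma(s_{i-1})<\wh 1$ since $s_{i-1}\ne 1$ — and in any case such a term is zero in the normalized complex. One could alternatively first invoke Corollary~\ref{computeExt} to reduce all $\Ext$ computations over $\Bbbk B$ to computations over $\Bbbk B_{\ge X}$, after which this proposition is merely the assertion that the normalized bar complex resolves the trivial module.
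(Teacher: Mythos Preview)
Your proof is correct and follows exactly the paper's approach: identify $C_\bullet(B,X)\to\Bbbk_X$ as the normalized bar resolution of the trivial $\Bbbk B_{\ge X}$-module (the paper simply cites \cite[Chapter~X]{MacLaneHomology} for this), then invoke Corollary~\ref{niceprojectives} to promote projectivity from $\Bbbk B_{\ge X}$ to $\Bbbk B$. The only difference is that you spell out the contracting homotopy and the well-definedness of the middle faces, whereas the paper absorbs all of this into the citation.
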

\begin{proof}
Observe that $C_\bullet(B,X)\to \Bbbk_X$ is the normalized bar resolution
of $\Bbbk_X$ as a $\Bbbk B_{\geq X}$-module (see~\cite[Chapter X]{MacLaneHomology}).  The result now follows from
Corollary~\ref{niceprojectives}.
\end{proof}

As a consequence, we can show that $\Ext^n_{\Bbbk B}(\Bbbk_X,\Bbbk_Y)$ vanishes when $Y\ngeq
X$.

\begin{Prop}\label{noextincomparablecase}
Let $B$ be a finite left regular band and $\Bbbk$ a commutative ring with unit.
Let $X,Y\in \Lambda(B)$ and assume $Y\ngeq X$.  Then \[\Ext^n_{\Bbbk B}(\Bbbk_X,\Bbbk_Y)=0\]
for all $n\geq 0$.
\end{Prop}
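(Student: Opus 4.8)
The plan is to compute $\Ext^n_{\Bbbk B}(\Bbbk_X,\Bbbk_Y)$ directly from the projective resolution $C_\bullet(B,X)\to\Bbbk_X$ of Proposition~\ref{standardbar}, by showing that the cochain complex $\Hom_{\Bbbk B}(C_\bullet(B,X),\Bbbk_Y)$ is identically zero. The point is that each $C_n(B,X)$ is, by its very construction, a free $\Bbbk B_{\geq X}$-module regarded as a $\Bbbk B$-module through the homomorphism $\rho_X\colon \Bbbk B\to\Bbbk B_{\geq X}$; consequently $\ker\rho_X=\Bbbk\nup X$ acts as zero on $C_n(B,X)$ for every $n$.

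First I would record the elementary consequence: if $M$ is any $\Bbbk B$-module annihilated by $\Bbbk\nup X$ and $f\colon M\to\Bbbk_Y$ is $\Bbbk B$-linear, then for every $k\in\Bbbk\nup X$ and $m\in M$ one has $k\cdot f(m)=f(k\cdot m)=0$, so the image of $f$ lies in the submodule $\{v\in\Bbbk_Y\mid kv=0\text{ for all }k\in\Bbbk\nup X\}$. When $Y\ngeq X$ this submodule is $\{0\}$: choosing $f_Y\in B$ with $\sigma(f_Y)=Y$, the hypothesis $Y\ngeq X$ puts $f_Y$ in $\nup X$, while $\sigma(f_Y)=Y\geq Y$ makes $f_Y$ act as the identity on $\Bbbk_Y$, so $f_Y\cdot v=0$ forces $v=0$. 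Hence $f=0$; that is, $\Hom_{\Bbbk B}(M,\Bbbk_Y)=0$ for every such $M$.

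Applying this with $M=C_n(B,X)$ for all $n$, the complex $\Hom_{\Bbbk B}(C_\bullet(B,X),\Bbbk_Y)$ vanishes in every degree, hence so does all of its cohomology, and therefore $\Ext^n_{\Bbbk B}(\Bbbk_X,\Bbbk_Y)=0$ for all $n\geq 0$.

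I do not anticipate a genuine obstacle here: all of the real work has already been done in assembling the resolution $C_\bullet(B,X)$ (Proposition~\ref{standardbar}) and in the earlier observation that $\ker\rho_X=\Bbbk\nup X$; what remains is only the one-line annihilator argument above, which uses nothing beyond surjectivity of the support map $\sigma$ to produce an element of support exactly $Y$. If one prefers not to invoke the explicit resolution, the same observation proves the stronger statement that $\Hom_{\Bbbk B}(M,\Bbbk_Y)=0$ for every $\Bbbk B$-module $M$ pulled back along $\rho_X$, and this suffices because the terms of any projective resolution of $\Bbbk_X$ built from $\Bbbk B_{\geq X}$-projectives have this property.
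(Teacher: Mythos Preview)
Your proof is correct and is essentially the paper's own argument: choose an element of support $Y$, note that it annihilates every $\Bbbk B_{\geq X}$-module (in particular each term $C_n(B,X)$ of the resolution from Proposition~\ref{standardbar}) while acting as the identity on $\Bbbk_Y$, so $\Hom_{\Bbbk B}(C_n(B,X),\Bbbk_Y)=0$ for all $n$. The paper phrases this more tersely, but the content is identical.
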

\begin{proof}
Let $a\in B$ with $\sigma(a)=Y$.  Then $a$ annihilates $\Bbbk B_{\geq X}$ and acts
as the identity on $\Bbbk_Y$.  Thus $\Hom_{\Bbbk B}(\Bbbk B_{\geq X},\Bbbk_Y)=0$.  The result now
follows by using the resolution in Proposition~\ref{standardbar} to compute
$\Ext^n_{\Bbbk B}(\Bbbk_X,\Bbbk_Y)$.
\end{proof}

We are left studying the case that $X\leq Y$. This can be recast in terms of
monoid cohomology, which we do in the next section.

\section{Proof of the Main Result: Classifying Spaces and Cohomology}
\label{s:MonoidCohomology}

We now turn to classifying spaces and the cohomology of monoids and categories.
The cohomology of monoids, which is a natural generalization of group
cohomology~\cite{Browncohomology}, is both a special case of the cohomology of
augmented algebras~\cite{CartanEilenberg} and of the cohomology of small
categories~\cite{Webb}. Although we are mostly interested in monoid cohomology,
we will also need the cohomology of small categories. The main example of a
small category that we need that is neither a monoid nor a poset is the
semidirect product of a monoid with a set.

The approach we take is inspired by the paper of Nunes~\cite{Nunes}.
In what follows, $M$ will denote a monoid and the set of idempotents of $M$
will be denoted by $E(M)$. Of course, if $M$ is a band, then $M = E(M)$.

\subsection{Cohomology of a small category}
\label{ss:categorycohomology}

Let $\mathscr C$ be a small category and fix a commutative ring $\Bbbk$ with unit.
By a \emph{left $\mathscr C$-module (over $\Bbbk$)} we mean a (covariant) functor $F\colon
\mathscr C\to \module \Bbbk$.  For instance, if we
view a monoid $M$ as a one-object category, then a left $M$-module (over $\Bbbk$) is the
same thing as a left $\Bbbk M$-module.  The category $\module \Bbbk^{\mathscr C}$ of
left $\mathscr C$-modules is well known to be an abelian category with enough projectives
and injectives~\cite{ringoids}. The morphisms between $\mathscr C$-modules $F,G$ are natural
transformations and we write $\Hom_{\Bbbk\mathscr C}(F,G)$ for the morphism set.

There is a functor $\Delta\colon \module \Bbbk\to \module \Bbbk^{\mathscr C}$ that
sends a $\Bbbk$-module $V$ to the constant functor $\Delta(V)\colon \mathscr C\to \module \Bbbk$
that sends all objects to $V$ and all arrows to the identity $1_V$.  For
example, if $M$ is a monoid, then $\Delta(V)$ is $V$ with the trivial
$\Bbbk M$-module structure.  The functor $\Delta$ has a right adjoint
$\ilim$~\cite{Mac-CWM}.  For instance, if $V$ is a $\Bbbk M$-module, then $\ilim V$
is the $\Bbbk$-module of $M$-invariants.  One has a natural isomorphism $\ilim
F\cong\Hom_{\Bbbk\mathscr C}(\Delta(\Bbbk),F)$ for any left $\mathscr C$-module $F$.  Thus the right derived functors $R^n\ilim$ can be identified
with $\Ext^n_{\Bbbk\mathscr C}(\Delta(\Bbbk),-)$ (where the subscript ``$\Bbbk\mathscr C$''
is to indicate we are considering left $\mathscr C$-modules over $\Bbbk$).  One defines the \emph{cohomology} of $\mathscr C$ with coefficients in
$F$ by
\begin{align*}
    H^n(\mathscr C,F) = \Ext^n_{\Bbbk\mathscr C}(\Delta(\Bbbk),F)=R^n\ilim F
\end{align*}
where $R^n$ denotes the $n^{th}$-right derived functor.
When $\mathscr C$ is a monoid or a group, this agrees with usual monoid and
group cohomology.  See~\cite{Webb,Algebraichomotopy,GabrielZisman,Quillen} for more details.

We are primarily interested in monoid cohomology.  In this case, we
will write $\Bbbk$ instead of $\Delta(\Bbbk)$, as is customary. In particular, if $M$ is a monoid and $V$ is a $\Bbbk M$-module, then $H^n(M,V)=\Ext^n_{\Bbbk M}(\Bbbk,V)$.  See~\cite[Chapter X]{MacLaneHomology} for associated bar resolutions.

  There was some work on
monoid cohomology in the late sixties and early
seventies~\cite{AdamsRieffel,Nicocohom1,Nicocohom2}.  Since the nineties, there
has been a surge in papers on monoid homology and cohomology, in part due to
connections with string rewriting
systems~\cite{brownrewriting,Squier,GubaPride,CohenFP,Nunes,GubaPrideBurnside,KobayashiOtto,Pridefiniteness,Lafont,KobayashiFP1,GrayPride}.

The following proposition follows immediately from
Corollary~\ref{computeExt} and the definition of monoid cohomology.

\begin{Prop}\label{cohomologicalinterp}
Let $B$ be a finite left regular band and $\Bbbk$ a commutative ring with unit. Let
$X\in \Lambda(B)$.  There is a natural isomorphism of functors
\begin{align*}
    \Ext^n_{\Bbbk B}(\Bbbk_X,-)\cong H^n(B_{\geq X},-)
\end{align*}
from the category of $\Bbbk B_{\geq X}$-modules to the category of $\Bbbk$-modules.  In
particular, if $Y\geq X$, then
\begin{equation}\label{simplifiedversion1}
\Ext^n_{\Bbbk B}(\Bbbk_X,\Bbbk_Y)\cong H^n(B_{\geq X},\Bbbk_Y)
\end{equation}
for all $n\geq 0$.
\end{Prop}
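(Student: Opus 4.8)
The plan is to pass from $\Ext$ over $\Bbbk B$ to $\Ext$ over the submonoid algebra $\Bbbk B_{\geq X}$, and then recognize the latter as monoid cohomology straight from the definition. First I would note that, viewed as a $\Bbbk B_{\geq X}$-module, $\Bbbk_X$ is nothing but the trivial module $\Bbbk$: every $b\in B_{\geq X}$ has $\sigma(b)\geq X$ and hence acts as the identity, which is precisely how $\Bbbk_X$ was defined at the start of Section~\ref{ss:mainresult}. Thus the only content of the proposition is that restricting coefficients from $\Bbbk B$ to $\Bbbk B_{\geq X}$ does not change $\Ext$.

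Next, for an arbitrary $\Bbbk B_{\geq X}$-module $N$, Corollary~\ref{computeExt} supplies an isomorphism $\Ext^n_{\Bbbk B}(\Bbbk_X,N)\cong \Ext^n_{\Bbbk B_{\geq X}}(\Bbbk_X,N)$: by Corollary~\ref{niceprojectives}, any projective resolution of a $\Bbbk B_{\geq X}$-module over $\Bbbk B_{\geq X}$ is still a projective resolution over $\Bbbk B$, so applying $\Hom_{\Bbbk B}(-,N)$ and taking cohomology of the resulting cochain complex produces the same groups as applying $\Hom_{\Bbbk B_{\geq X}}(-,N)$. Concretely, one may take the resolution $C_\bullet(B,X)$ of Proposition~\ref{standardbar}, which is simultaneously a $\Bbbk B$-projective resolution of $\Bbbk_X$ and the normalized bar resolution of the trivial $\Bbbk B_{\geq X}$-module. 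Since the defining formula for monoid cohomology recalled above gives $\Ext^n_{\Bbbk B_{\geq X}}(\Bbbk,N)=H^n(B_{\geq X},N)$, stringing these identifications together yields the natural isomorphism of functors $\Ext^n_{\Bbbk B}(\Bbbk_X,-)\cong H^n(B_{\geq X},-)$ on the category of $\Bbbk B_{\geq X}$-modules. The displayed special case \eqref{simplifiedversion1} is then obtained by specializing $N$ to $\Bbbk_Y$, which is a $\Bbbk B_{\geq X}$-module exactly when $Y\geq X$.

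There is no serious obstacle; the one point of bookkeeping is \emph{naturality}. One must check that the isomorphism supplied by Corollary~\ref{computeExt} is natural in the coefficient module, so that it assembles into an isomorphism of functors rather than merely a family of isomorphisms of $\Bbbk$-modules. This is automatic: the isomorphism is induced by applying the additive contravariant functors $\Hom_{\Bbbk B}(-,N)$ and $\Hom_{\Bbbk B_{\geq X}}(-,N)$ to one and the same complex $C_\bullet(B,X)$, whose terms are both $\Bbbk B$-projective and $\Bbbk B_{\geq X}$-projective, and on which the two $\Hom$-functors manifestly agree; both are natural in $N$, and passing to cohomology preserves naturality.
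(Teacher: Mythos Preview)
Your argument is correct and is exactly the paper's proof: the paper simply states that the proposition follows immediately from Corollary~\ref{computeExt} and the definition of monoid cohomology, and you have unpacked precisely that. Your additional remarks on naturality and the explicit use of the bar resolution $C_\bullet(B,X)$ are fine elaborations but add nothing new to the underlying strategy.
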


\subsection{The Eckmann-Shapiro lemma}
In what follows we shall need a variant of the Eckmann-Shapiro lemma for
monoids. It will be convenient to use a very general Eckmann-Shapiro type lemma
in the context of abelian categories that was proved by Adams and Rieffel in
their study of semigroup cohomology~\cite[Theorem~1]{AdamsRieffel}.

\begin{Thm}\label{AdamsRieffel}
Let $\mathscr A,\mathscr B,\mathscr C$ be abelian categories such that
$\mathscr A$ has enough injectives and suppose that one has a diagram of
additive functors
\begin{align*}
    \xymatrix{\mathscr A\ar@/^/[r]^S&\mathscr B\ar@/^/[l]^T\ar[r]^F& \mathscr C}
\end{align*}
such that $S$ is right adjoint to $T$ and $S,T$ are exact. Then there is a
natural isomorphism of functors  $R^n(F\circ S)\cong (R^nF)\circ S$ for all
$n\geq 0$.
\end{Thm}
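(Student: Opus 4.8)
The plan is to run the classical Grothendieck argument for composing a right derived functor with an exact, injective-preserving functor. The first thing I would establish is that \emph{$S$ carries injective objects to injective objects}: since $T$ is exact and left adjoint to $S$, for any injective $J$ in $\mathscr{A}$ the adjunction isomorphism $\Hom_{\mathscr{B}}(-,SJ)\cong\Hom_{\mathscr{A}}(T(-),J)$ exhibits $\Hom_{\mathscr{B}}(-,SJ)$ as the composite of the exact functor $T$ with the exact functor $\Hom_{\mathscr{A}}(-,J)$, hence exact; this is precisely the injectivity of $SJ$. This is the only place where the hypothesis that $T$ is exact is used.

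Next, for an object $A$ of $\mathscr{A}$ I would choose an injective resolution $0\to A\to I^{\bullet}$, possible since $\mathscr{A}$ has enough injectives, and apply the exact functor $S$ to obtain an exact complex $0\to SA\to SI^{\bullet}$, that is, a resolution of $SA$ in $\mathscr{B}$. By the previous step each $SI^{n}$ is injective, hence $F$-acyclic (the injective resolution of an injective object is concentrated in degree zero, so its higher right derived functors vanish), so $SI^{\bullet}$ is an $F$-acyclic resolution of $SA$. I would then compare the two sides: by definition $R^{n}(F\circ S)(A)=H^{n}\big((F\circ S)(I^{\bullet})\big)=H^{n}\big(F(SI^{\bullet})\big)$, while the standard fact that right derived functors may be computed from acyclic resolutions gives $H^{n}\big(F(SI^{\bullet})\big)\cong R^{n}F(SA)$. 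Together these yield $R^{n}(F\circ S)(A)\cong R^{n}F(SA)=\big((R^{n}F)\circ S\big)(A)$ for all $n\ge 0$. Equivalently, one could invoke the Grothendieck spectral sequence $R^{p}F\big(R^{q}S(A)\big)\Rightarrow R^{p+q}(F\circ S)(A)$, which collapses since $S$ exact forces $R^{q}S=0$ for $q>0$.

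Finally I would check naturality. A morphism $A\to A'$ in $\mathscr{A}$ lifts, uniquely up to chain homotopy, to a morphism $I^{\bullet}\to J^{\bullet}$ of chosen injective resolutions; applying $S$ then $F$ and passing to cohomology shows the isomorphism $R^{n}(F\circ S)(A)\cong R^{n}F(SA)$ is natural in $A$, as is the comparison isomorphism between $R^{n}F$ evaluated on an injective resolution and on an $F$-acyclic resolution. Hence the objectwise isomorphisms assemble into a natural isomorphism of functors $R^{n}(F\circ S)\cong(R^{n}F)\circ S$ from $\mathscr{A}$ to $\mathscr{C}$.

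The argument is entirely routine homological algebra, so I do not anticipate a serious obstacle; the only points needing genuine care are the preservation of injectives by the right adjoint of an exact functor and the bookkeeping that makes the final isomorphism natural rather than merely objectwise. Strictly, for $R^{n}F$ itself to be defined one wants $\mathscr{B}$ to have enough injectives as well; this holds in all of this paper's applications, and in general one works with $F$-acyclic resolutions throughout.
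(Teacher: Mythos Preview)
Your argument is correct and is precisely the standard Grothendieck-style proof one would expect. However, the paper does not actually supply a proof of this theorem: it is stated as a result of Adams and Rieffel and cited as \cite[Theorem~1]{AdamsRieffel}, with no argument given in the paper itself. So there is nothing to compare your proof against beyond confirming that it is a valid derivation of the quoted result.

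One small remark: your caveat at the end is well placed. As stated, the theorem implicitly assumes $R^nF$ is defined, which in practice means $\mathscr B$ has enough injectives (or at least enough $F$-acyclics). Your proof actually shows more in the present setting: since $S$ carries an injective resolution of $A$ to an \emph{injective} resolution of $SA$, you do not merely need $F$-acyclicity of the $SI^n$ but get genuine injectivity, so $R^nF(SA)$ is computed directly from $F(SI^\bullet)$ without appeal to the acyclic-resolution comparison lemma. This makes the argument slightly cleaner than the version passing through acyclic resolutions or the Grothendieck spectral sequence.
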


Let $\p\colon M\to N$ be a semigroup homomorphism of monoids.  That is,
$\p(m_1m_2)=\p(m_1)\p(m_2)$ for all $m_1,m_2\in M$, but we do not assume
$\p(1)=1$.  Let $e=\p(1)$; it is an idempotent.  Then $\Bbbk Ne$ is a
$\Bbbk N$-$\Bbbk M$-bimodule and $\Bbbk eN$ is a $\Bbbk M$-$\Bbbk N$-bimodule.  If $V$ is a $\Bbbk N$-module, we have that $eV$ is a
$\Bbbk M$-module via the action $mv=\p(m)v$.  Notice that $1v=ev=v$ since $v\in eV$.
The functor $V\mapsto eV$ is exact and has left adjoint $W\mapsto
\Bbbk Ne\otimes_{\Bbbk M}W$ and right adjoint $W\mapsto \Hom_M(eN,W)$ where
$\Hom_M(eN,W)$ is the set of all left $M$-set morphisms $f\colon eN\to W$ with
pointwise $\Bbbk$-module structure and $N$-action $nf(n') = f(n'n)$.  Notice that
$\Hom_M(eN,W)\cong \Hom_{\Bbbk M}(\Bbbk eN,W)$ via restriction to the basis $eN$.  Thus
$\Hom_M(eN,-)$ is exact whenever $\Bbbk eN$ is a projective $\Bbbk M$-module.
Theorem~\ref{AdamsRieffel} specializes in this context as follows.

\begin{Lemma}\label{Eckmannshapiro}
Let $\p\colon M\to N$ be a semigroup homomorphism of monoids and let $e=\p(1)$.
Let $\Bbbk$ be a commutative ring with unit and suppose that $\Bbbk eN$ is a projective
$\Bbbk M$-module.  Then, for any $\Bbbk M$-module $W$ and $\Bbbk N$-module $V$, one has a
natural isomorphism
\begin{align*}
    \Ext^n_{\Bbbk M}(eV,W)\cong \Ext^n_{\Bbbk N}(V,\Hom_M(eN,W))
\end{align*}
for all $n\geq 0$.  In particular,
\begin{align*}
    H^n(M,W)\cong H^n(N,\Hom_M(eN,W))
\end{align*}
for all $n\geq 0$.
\end{Lemma}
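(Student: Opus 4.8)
The plan is to obtain the lemma as a direct instance of the Adams--Rieffel theorem (Theorem~\ref{AdamsRieffel}), with $S$ and $T$ taken to be the coinduction and restriction functors attached to $\p$. First I would set $\mathscr A$ to be the category of (left) $\Bbbk M$-modules, $\mathscr B$ the category of $\Bbbk N$-modules, and $\mathscr C$ the category of $\Bbbk$-modules; recall that $\mathscr A$ has enough injectives. Let $T\colon\mathscr B\to\mathscr A$ be the restriction functor $T(V)=eV$, with $M$ acting through $\p$, and let $S\colon\mathscr A\to\mathscr B$ be $S(W)=\Hom_M(eN,W)$. The paragraph preceding the lemma supplies exactly the three facts we need: $T$ is exact; $S$ is right adjoint to $T$, via the natural isomorphism $\Hom_{\Bbbk M}(eV,W)\cong\Hom_{\Bbbk N}(V,\Hom_M(eN,W))$; and $S$ is exact, since restriction to the basis $eN$ identifies $S$ with $\Hom_{\Bbbk M}(\Bbbk eN,-)$ and $\Bbbk eN$ is $\Bbbk M$-projective by hypothesis. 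Thus all hypotheses of Theorem~\ref{AdamsRieffel} are in place.

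Next I would fix the $\Bbbk N$-module $V$ and take $F=\Hom_{\Bbbk N}(V,-)\colon\mathscr B\to\mathscr C$. This is additive, and $R^nF=\Ext^n_{\Bbbk N}(V,-)$. The adjunction isomorphism identifies the composite $F\circ S$ with $\Hom_{\Bbbk M}(eV,-)\colon\mathscr A\to\mathscr C$ naturally, so $R^n(F\circ S)=\Ext^n_{\Bbbk M}(eV,-)$. Theorem~\ref{AdamsRieffel} now yields $R^n(F\circ S)\cong(R^nF)\circ S$, which reads $\Ext^n_{\Bbbk M}(eV,W)\cong\Ext^n_{\Bbbk N}(V,\Hom_M(eN,W))$ for every $n\geq 0$ and every $\Bbbk M$-module $W$, naturally in $W$.

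The final assertion I would get by specializing $V$ to the trivial $\Bbbk N$-module $\Bbbk$: then $eV$ is $\Bbbk$ with $M$ acting trivially through $\p$, so the left-hand side becomes $\Ext^n_{\Bbbk M}(\Bbbk,W)=H^n(M,W)$ and the right-hand side becomes $\Ext^n_{\Bbbk N}(\Bbbk,\Hom_M(eN,W))=H^n(N,\Hom_M(eN,W))$, by the definition of monoid cohomology given in Section~\ref{ss:categorycohomology}.

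I do not expect a serious obstacle: the preparatory discussion has already assembled the adjunction and both exactness statements, so the argument is essentially a matter of plugging into Theorem~\ref{AdamsRieffel}. The only spot where the hypothesis that $\Bbbk eN$ is $\Bbbk M$-projective is genuinely used is the exactness of the coinduction functor $S$, and the one point needing a moment's care is that the identification $F\circ S\cong\Hom_{\Bbbk M}(eV,-)$ is natural in $W$ (it is, being built from the unit and counit of the adjunction); everything else is routine bookkeeping.
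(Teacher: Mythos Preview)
Your proposal is correct and follows essentially the same route as the paper: both apply Theorem~\ref{AdamsRieffel} with $\mathscr A=\module{\Bbbk M}$, $\mathscr B=\module{\Bbbk N}$, $\mathscr C=\module{\Bbbk}$, $T(V)=eV$, $S(W)=\Hom_M(eN,W)$, and $F=\Hom_{\Bbbk N}(V,-)$, using the adjunction and exactness facts recorded in the paragraph before the lemma. The paper's proof is more terse but identical in substance, and its final line ``$e\Bbbk=\Bbbk$'' is exactly your specialization $V=\Bbbk$.
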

\begin{proof}
In Theorem~\ref{AdamsRieffel}, take $\mathscr A=\module {\Bbbk M}$, $\mathscr B=\module {\Bbbk N}$ and $\mathscr C=\module \Bbbk$, take $T(V)=eV$, $S(W) =
\Hom_M(eN,W)$ and $F=\Hom_{\Bbbk N}(V,-)$ and use that $F\circ S =
\Hom_{\Bbbk N}(V,\Hom_M(eN,-))\cong \Hom_{\Bbbk M}(eV,-)$.  The final statement follows because $e\Bbbk=\Bbbk$.
\end{proof}

A corollary we shall use later for a dimension-shifting argument is the
following.  Let $e\in E(M)$ be an idempotent and suppose that $V$ is any
$\Bbbk$-module. Then $V^{eM}$ with the natural $\Bbbk M$-module structure given by
$mf(m') = f(m'm)$ is acyclic for cohomology.

\begin{Cor}\label{niceacyclic}
    Let $V$ be a $\Bbbk$-module and $e\in E(M)$. Then
    \begin{align*}
        H^n(M,V^{eM}) \cong
        \begin{cases}
            V & \text{if}\ n=0\\
            0 &\text{else.}
        \end{cases}
    \end{align*}
\end{Cor}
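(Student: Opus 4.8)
The plan is to realize $V^{eM}$ as the $\Bbbk$-linear dual of a projective right $\Bbbk M$-module and then feed this into the abstract Eckmann--Shapiro statement of Theorem~\ref{AdamsRieffel}. First I would rewrite $V^{eM}=\Hom_\Bbbk(e\Bbbk M,V)$, where $e\Bbbk M$ denotes the right ideal $e\cdot\Bbbk M$ of $\Bbbk M$, regarded as a right $\Bbbk M$-module with $\Bbbk$-basis $eM=\{em\mid m\in M\}$: a $\Bbbk$-linear map $e\Bbbk M\to V$ is the same datum as a function $eM\to V$, and the dual left action $(m\varphi)(\xi)=\varphi(\xi m)$ on $\Hom_\Bbbk(e\Bbbk M,V)$ corresponds exactly to the action $mf(m')=f(m'm)$ in the statement. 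The one structural input I need is the splitting $\Bbbk M=e\Bbbk M\oplus(1-e)\Bbbk M$, which holds simultaneously as a decomposition of right $\Bbbk M$-modules and of $\Bbbk$-modules; hence $e\Bbbk M$ is a direct summand of the free right $\Bbbk M$-module $\Bbbk M$ (so it is flat over $\Bbbk M$) and a direct summand of the free $\Bbbk$-module $\Bbbk M$ (so $\Hom_\Bbbk(e\Bbbk M,-)$ is exact).

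Then I would apply Theorem~\ref{AdamsRieffel} with $\mathscr A=\module\Bbbk$ (which has enough injectives), $\mathscr B=\module{\Bbbk M}$, $\mathscr C=\module\Bbbk$, taking $T\colon\module{\Bbbk M}\to\module\Bbbk$ to be $T(W)=e\Bbbk M\otimes_{\Bbbk M}W$, its right adjoint $S\colon\module\Bbbk\to\module{\Bbbk M}$ to be $S(V)=\Hom_\Bbbk(e\Bbbk M,V)=V^{eM}$, and $F\colon\module{\Bbbk M}\to\module\Bbbk$ to be $F=\Hom_{\Bbbk M}(\Bbbk,-)$, so that $R^nF=\Ext^n_{\Bbbk M}(\Bbbk,-)=H^n(M,-)$. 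The pairing $(S,T)$ is the usual tensor--hom adjunction, and $S$ and $T$ are exact by the previous paragraph, so all the hypotheses hold and Theorem~\ref{AdamsRieffel} supplies a natural isomorphism $R^n(F\circ S)\cong(R^nF)\circ S$. Evaluating the right-hand side at a $\Bbbk$-module $V$ produces precisely $H^n(M,V^{eM})$.

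Finally I would identify $F\circ S$. Using the tensor--hom adjunction once more, $F(S(V))=\Hom_{\Bbbk M}(\Bbbk,\Hom_\Bbbk(e\Bbbk M,V))\cong\Hom_\Bbbk(e\Bbbk M\otimes_{\Bbbk M}\Bbbk,V)$, and $e\Bbbk M\otimes_{\Bbbk M}\Bbbk\cong\Bbbk$ because the augmentation ideal identifies every $em$ with $e$ (equivalently, in the splitting above the summand $(1-e)\Bbbk M\otimes_{\Bbbk M}\Bbbk$ vanishes and $e\Bbbk M\otimes_{\Bbbk M}\Bbbk$ is the rank-one free $\Bbbk$-module on the class of $e$). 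Hence $F\circ S$ is naturally isomorphic to the identity functor of $\module\Bbbk$, which, being exact, has $R^0(F\circ S)\cong\mathrm{id}$ and $R^n(F\circ S)=0$ for $n\geq 1$; translating back through the displayed isomorphism gives $H^n(M,V^{eM})\cong V$ for $n=0$ and $0$ for $n\geq 1$, which is the claim. I do not anticipate a real obstacle: the only point demanding care is the bookkeeping of left versus right module structures, so that the tensor--hom adjunction has the correct variance and $V^{eM}$ is genuinely $S(V)$. As a consistency check, the degree-zero case can also be seen directly, since an $M$-invariant function $f\colon eM\to V$ satisfies $f(em)=f(e)$ for all $m\in M$ and is therefore constant, whence $(V^{eM})^M\cong V$.
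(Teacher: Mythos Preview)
Your argument is correct and is essentially the paper's proof unwound. The paper applies the preceding Lemma~\ref{Eckmannshapiro} to the semigroup homomorphism $\{1\}\to M$, $1\mapsto e$, which yields $H^n(M,V^{eM})\cong H^n(\{1\},V)$; your direct invocation of Theorem~\ref{AdamsRieffel} with $\mathscr A=\module\Bbbk$, $S(V)=\Hom_\Bbbk(e\Bbbk M,V)$ and $T=e\Bbbk M\otimes_{\Bbbk M}-$ is exactly the instance of Adams--Rieffel that underlies that lemma in this special case (since $\Bbbk\{1\}=\Bbbk$, and the functor $V\mapsto eV$ of the lemma coincides with your $T$ via the splitting $\Bbbk M=e\Bbbk M\oplus(1-e)\Bbbk M$).
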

\begin{proof}
    Let $\{1\}$ denote the trivial monoid and consider the semigroup homomorphism
    $\p\colon \{1\}\to M$ with $\p(1)=e$.  The algebra of the trivial monoid is $\Bbbk$
    and $\Bbbk eM$ is a free $\Bbbk$-module.  Thus Lemma~\ref{Eckmannshapiro} applies.
    Observe that $\Hom_1(eM,V)=V^{eM}$ and so $H^n(M,V^{eM})\cong H^n(\{1\},V)$.
    The result follows.
\end{proof}

\subsection{Classifying space of a small category}
\label{ss:classifyingspace}
To each small category $\mathscr C$, there is naturally associated a CW complex $\mathcal B\mathscr C$ called the \emph{classifying space of $\mathscr C$}.  Usually, $\mathcal B\mathscr C$ is defined as the geometric realization of a certain simplicial set called the nerve of $\mathscr C$~\cite{ktheory}, but we follow instead the construction in~\cite[Definition~5.3.15]{Rosenberg}.

There is a $0$-cell of $\mathcal B\mathscr C$ for each object of the category $\mathscr C$.  For $q\geq 1$, there is a $q$-cell for each diagram
\begin{equation}\label{simplex}
\xymatrix{c_0\ar[r]^{f_0}& c_1\ar[r]^{f_1}&\cdots\ar[r]^{f_{q-2}}&c_{q-1}\ar[r]^{f_{q-1}}&c_q}
\end{equation}
with no $f_i$ an identity arrow.  This $q$-cell (which should be thought of as a $q$-simplex) is attached in the obvious way to any cell of smaller dimension that can be obtained by deleting some $c_i$ and, if $i\notin \{0,q\}$, replacing $f_{i-1}$ and $f_{i}$ by $f_{i}f_{i-1}$ (where if this is an identity morphism then we delete this arrow as well). Notice that $\mathcal B\mathscr C$ is homeomorphic to $\mathcal B\mathscr C^{op}$~\cite{Quillen}.

A functor $F\colon
\mathscr C\to \mathscr D$ induces a cellular map $\mathcal BF\colon \mathcal B\mathscr C\to \mathcal B\mathscr D$
by applying $F$ to \eqref{simplex} and deleting identity morphisms.
Thus $\mathcal B$ is a functor from the category of small categories to the category of CW complexes.

Let us give some examples.  If $G$ is a group, viewed as a one-object category,
then $\mathcal BG$ is Milnor's classifying space of $G$ and is an Eilenberg-Mac~Lane
$K(G,1)$-space.  When $M$ is a monoid, viewed as a one-object category,
one has that $\mathcal BM$ is the classical classifying space of $M$. It is
known that every CW complex is homotopy equivalent to the classifying space of
a monoid~\cite{mcduff}.  It is easy to see that if $V$ is a $\Bbbk$-module, viewed
as a $\Bbbk M$-module via the trivial action, then the co-chain complex associated
to the cellular cohomology of $\mathcal BM$ with coefficients in $V$ is precisely the co-chain complex used to compute
$H^n(M,V)$ if one uses the standard bar resolution~\cite[Chapter X]{MacLaneHomology} for the trivial $\Bbbk M$-module.
Thus $H^\bullet(M,V)=H^\bullet(\mathcal BM,V)$.  More generally, one has, the following theorem, see for instance~\cite{Quillen}, \cite[Theorem 5.3]{Webb}
or~\cite[Appendix II, 3.3]{GabrielZisman}.

\begin{Thm}
For a small category $\mathscr C$, one has $H^\bullet(\mathscr C,\Delta(V))\cong
H^\bullet(\mathcal B\mathscr C,V)$ for any $\Bbbk$-module $V$.
\end{Thm}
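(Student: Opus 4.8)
The plan is to compute $H^\bullet(\mathscr C,\Delta(V))=\Ext^\bullet_{\Bbbk\mathscr C}(\Delta(\Bbbk),\Delta(V))$ by means of an explicit projective resolution of the constant functor $\Delta(\Bbbk)$ in $\module \Bbbk^{\mathscr C}$ that, upon applying $\Hom_{\Bbbk\mathscr C}(-,\Delta(V))$, becomes verbatim the cellular cochain complex of $\mathcal B\mathscr C$ with coefficients in $V$. Granting this, the isomorphism is immediate and its naturality in $V$ is visible from the construction.

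First I would introduce the representable modules. For an object $c$ of $\mathscr C$, let $P_c=\Bbbk\mathscr C(c,-)$ be the functor sending an object $d$ to the free $\Bbbk$-module on $\mathscr C(c,d)$ and a morphism to the induced map. The Yoneda lemma furnishes a natural isomorphism $\Hom_{\Bbbk\mathscr C}(P_c,F)\cong F(c)$; since evaluation at an object is exact (kernels and cokernels in a functor category are formed objectwise), each $P_c$ is projective, and hence so is any direct sum of representables.

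Next I would write down the normalized bar resolution. For $n\geq 1$ let $C_n$ be the direct sum of one copy of $P_{c_n}$ for each chain $c_0\xrightarrow{f_1}c_1\xrightarrow{f_2}\cdots\xrightarrow{f_n}c_n$ of composable morphisms in which no $f_i$ is an identity, and let $C_0=\bigoplus_c P_c$. Put $d_n=\sum_{i=0}^n(-1)^i\partial_i$, where $\partial_0$ deletes $c_0$ and $f_1$ (the induced transformation $P_{c_n}\to P_{c_n}$ being the identity), $\partial_i$ for $0<i<n$ replaces $f_i,f_{i+1}$ by the composite $f_{i+1}f_i$ — the corresponding summand mapping to $0$ when this composite is an identity — and $\partial_n$ deletes $c_n$ and $f_n$, mapping $P_{c_n}\to P_{c_{n-1}}$ by the transformation induced by $f_n$. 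Let $\varepsilon\colon C_0\to\Delta(\Bbbk)$ send every morphism to $1$. The relations $\varepsilon d_1=0$ and $d_{n-1}d_n=0$ are the usual simplicial-identity bookkeeping. For exactness I would evaluate the augmented complex at an arbitrary object $d$ and exhibit a contracting homotopy of the resulting complex of $\Bbbk$-modules: it is the standard extra degeneracy obtained from the identity arrow at $d$, which identifies that complex with the augmented (normalized) chain complex of the nerve of the slice category of objects over $d$, a category with a terminal object. Hence $C_\bullet\to\Delta(\Bbbk)$ is a projective resolution.

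Finally, applying $\Hom_{\Bbbk\mathscr C}(-,\Delta(V))$ and using Yoneda together with the fact that $\Hom$ out of a direct sum is a product, the $n$-th cochain module is a product of copies of $V$, one for each chain of $n$ composable non-identity morphisms, and $d_n$ transposes to the alternating sum of coface maps. Comparing with the CW structure on $\mathcal B\mathscr C$ recalled just above — whose $n$-cells are exactly these chains, and whose attaching maps delete an object and, when it is interior, compose the two adjacent arrows (dropping the cell when an identity results) — this cochain complex is precisely $C^\bullet_{\mathrm{cell}}(\mathcal B\mathscr C;V)$, whence $H^n(\mathscr C,\Delta(V))\cong H^n(\mathcal B\mathscr C,V)$ naturally in $V$. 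I expect the main obstacle to be the exactness step — constructing the contracting homotopy after evaluation at each object and verifying it — together with the routine but finicky sign and indexing bookkeeping required to identify the dual bar differential with the cellular coboundary exactly as $\mathcal B\mathscr C$ is set up in the excerpt; alternatively one may just invoke \cite[Theorem~5.3]{Webb} or \cite[Appendix~II, 3.3]{GabrielZisman} for the exactness.
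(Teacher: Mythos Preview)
Your proof is correct and follows the standard approach. Note that the paper does not actually prove this theorem: it merely cites \cite{Quillen}, \cite[Theorem~5.3]{Webb}, and \cite[Appendix~II, 3.3]{GabrielZisman}, after sketching in the preceding sentences exactly your argument in the special case of a monoid (the cellular cochain complex of $\mathcal BM$ coincides with the cochain complex obtained from the normalized bar resolution). Your write-up is the natural categorical generalization of that sketch---the bar resolution built from representables $P_c=\Bbbk\mathscr C(c,-)$, exactness verified objectwise via the extra degeneracy coming from the terminal object of $\mathscr C/d$, and identification of $\Hom_{\Bbbk\mathscr C}(C_\bullet,\Delta(V))$ with the cellular cochains of $\mathcal B\mathscr C$---and is precisely the argument one finds in the cited references.
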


Next we recall that a poset $P$ can be viewed as a category.  The object set of
$P$ is $P$ itself.  The arrow set is $\{(p,q)\in P\times P\mid  p\leq q\}$.
The arrow $(p,q)$ goes from $p$ to $q$ and composition is given by
$(q,r)(p,q)=(p,r)$.  The identity at $p$ is $(p,p)$.  Functors between posets
are precisely order preserving maps.  Galois connections are adjunctions
between posets.  Notice the $q$-cells of $\mathcal BP$ are
precisely the chains of length $q$ in $P$ and the gluing is by homeomorphisms of the faces.  Thus $\mathcal BP$ is the \emph{order complex} $\Delta(P)$; see~\cite{Quillen}.  

A key result of Segal~\cite{GSegal} is the following (cf.~\cite[Lemma~5.3.17]{Rosenberg}).

\begin{Lemma}[Segal]\label{SegalLemma}
If $F,G\colon \mathscr C\to \mathscr D$ are functors between small categories and there is a natural transformation $F\Rightarrow G$, then $\mathcal BF$ and $\mathcal BG$ are homotopic.
\end{Lemma}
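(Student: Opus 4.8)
The plan is to realize the desired homotopy as the classifying space of a single functor, via the standard observation that a natural transformation is nothing but a functor out of a ``cylinder category''. Let $\mathbf{2}$ denote the poset $\{0<1\}$ regarded as a category, so it has exactly one non-identity arrow $0\to 1$. The first step is the purely formal fact that the data of a natural transformation $\eta\colon F\Rightarrow G$ is the same as the data of a functor $\Phi\colon \mathscr C\times\mathbf{2}\to \mathscr D$ whose restriction to $\mathscr C\times\{0\}$ equals $F$, whose restriction to $\mathscr C\times\{1\}$ equals $G$, and which sends each arrow $(1_c,0\to 1)$ to $\eta_c$. Indeed $\Phi$ is forced on objects by $F$ and $G$, and on an arrow of the form $(f\colon c\to c',\,0\to 1)$ one is forced to set $\Phi(f,0\to 1)=G(f)\eta_c=\eta_{c'}F(f)$; functoriality of $\Phi$ on such arrows is precisely the collection of naturality squares for $\eta$, and the remaining compositions check out automatically.

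Next I would apply the classifying space functor $\mathcal B$. By the fact established earlier that $\mathcal BP=\Delta(P)$ for a poset $P$, the space $\mathcal B\mathbf{2}$ is a single $1$-simplex, hence homeomorphic to the unit interval $I=[0,1]$, with the objects $0$ and $1$ corresponding to the two endpoints. The crucial input is that $\mathcal B$ carries the product $\mathscr C\times\mathbf{2}$ to the topological product, i.e.\ there is a homeomorphism
\[
\mathcal B(\mathscr C\times\mathbf{2})\;\cong\;\mathcal B\mathscr C\times\mathcal B\mathbf{2}\;\cong\;\mathcal B\mathscr C\times I
\]
carrying the images of the two inclusions $i_0,i_1\colon\mathscr C\to\mathscr C\times\mathbf{2}$ (onto $\mathscr C\times\{0\}$ and $\mathscr C\times\{1\}$) to $\mathcal B\mathscr C\times\{0\}$ and $\mathcal B\mathscr C\times\{1\}$. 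Granting this, composing the above homeomorphism with $\mathcal B\Phi$ produces a cellular map $H\colon\mathcal B\mathscr C\times I\to\mathcal B\mathscr D$, and functoriality of $\mathcal B$ together with $\Phi\circ i_0=F$ and $\Phi\circ i_1=G$ shows that $H$ restricts to $\mathcal BF$ on $\mathcal B\mathscr C\times\{0\}$ and to $\mathcal BG$ on $\mathcal B\mathscr C\times\{1\}$. Thus $H$ is a homotopy from $\mathcal BF$ to $\mathcal BG$, which is what we want.

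The only non-formal step, and the one where care is needed, is the product formula $\mathcal B(\mathscr C\times\mathbf{2})\cong\mathcal B\mathscr C\times I$. With the construction of $\mathcal B$ used in this paper, $\mathcal B\mathscr C$ is homeomorphic to the geometric realization of the nerve of $\mathscr C$; the nerve functor from small categories to simplicial sets preserves products, and geometric realization commutes with a product by a locally compact factor (here the factor is $I$, which is even compact), so no compactly-generated-topology subtleties intervene. Alternatively, one can argue combinatorially on cells: the cells of $\mathcal B(\mathscr C\times\mathbf{2})$ lying over a given $q$-cell $c_0\to\cdots\to c_q$ of $\mathcal B\mathscr C$ are the $q+1$ chains obtained by duplicating one of the vertices $c_k$ to record the jump $0\to 1$, together with the two ``flat'' $q$-cells at levels $0$ and $1$; this is exactly the standard simplicial subdivision of $\Delta^q\times I$, and tracking the attaching maps yields the homeomorphism directly. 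Either route settles the lemma; everything else is bookkeeping.
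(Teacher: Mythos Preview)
Your argument is correct and is in fact the standard proof of this lemma. Note, however, that the paper does not supply its own proof here: the lemma is stated as a ``key result of Segal'' and attributed to \cite{GSegal} (with a cross-reference to \cite[Lemma~5.3.17]{Rosenberg}), so there is no in-paper proof to compare against. What you have written is essentially the argument one finds in those references: encode the natural transformation as a functor $\mathscr C\times\mathbf{2}\to\mathscr D$, use $\mathcal B\mathbf{2}\cong I$, and invoke the product formula for classifying spaces. Your care in isolating the product formula as the only non-formal step, and in noting that the compactness of $I$ sidesteps any compactly-generated-topology issues, is exactly right.
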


In particular, if $\mathscr C$ and $\mathscr D$ are naturally equivalent, then
$\mathcal B\mathscr C$ and $\mathcal B\mathscr D$ are homotopy equivalent.  More
generally, if a functor $F\colon \mathscr C\to \mathscr D$ has an adjoint, then $\mathcal B\mathscr C$ and $\mathcal B\mathscr D$ are homotopy equivalent~\cite[Corollary 3.7]{ktheory}.  Actually, we have the following more general consequence.

\begin{Prop}\label{homotopyequiv}
Let $F\colon \mathscr C\to \mathscr D$ and $G\colon \mathscr D\to \mathscr C$
be functors such that there exist natural transformations between $GF$ and
$1_{\mathscr C}$ and between $FG$ and $1_{\mathscr D}$, in either direction.
Then $\mathcal B\mathscr C$ is homotopy equivalent to $\mathcal B\mathscr D$.
\end{Prop}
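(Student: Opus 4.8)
The plan is to deduce this directly from Segal's Lemma (Lemma~\ref{SegalLemma}) together with the functoriality of the classifying space construction $\mathcal B$, which has already been recorded above. First I would note the purely formal ingredients: since $\mathcal B$ is a functor from small categories to CW complexes, we have $\mathcal B(GF) = \mathcal BG\circ\mathcal BF$, $\mathcal B(FG) = \mathcal BF\circ\mathcal BG$, and $\mathcal B(1_{\mathscr C}) = 1_{\mathcal B\mathscr C}$, $\mathcal B(1_{\mathscr D}) = 1_{\mathcal B\mathscr D}$.

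Next I would apply Segal's Lemma to the pair of functors $GF$ and $1_{\mathscr C}$ from $\mathscr C$ to $\mathscr C$. By hypothesis there is a natural transformation between $GF$ and $1_{\mathscr C}$ in one direction or the other; Segal's Lemma states that the existence of a natural transformation between two functors forces their classifying maps to be homotopic, regardless of which way the transformation points. Hence $\mathcal B(GF)$ and $\mathcal B(1_{\mathscr C})$ are homotopic as self-maps of $\mathcal B\mathscr C$, and combining with the functoriality identities above gives $\mathcal BG\circ\mathcal BF \simeq 1_{\mathcal B\mathscr C}$. Running the same argument with the pair $FG$ and $1_{\mathscr D}$ yields $\mathcal BF\circ\mathcal BG \simeq 1_{\mathcal B\mathscr D}$. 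Therefore $\mathcal BF\colon \mathcal B\mathscr C\to\mathcal B\mathscr D$ is a homotopy equivalence with homotopy inverse $\mathcal BG$, which is the assertion of the proposition.

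The only point that needs a moment of care — and thus the nearest thing to an obstacle — is checking that Segal's Lemma really is insensitive to the direction of the natural transformations: a natural transformation $F\Rightarrow G$ produces a homotopy $\mathcal BF\simeq\mathcal BG$, and since ``is homotopic to'' is a symmetric (indeed equivalence) relation on maps, a transformation $G\Rightarrow F$ serves equally well; moreover the direction of the transformation relating $GF$ to $1_{\mathscr C}$ need not be coordinated with that relating $FG$ to $1_{\mathscr D}$, since the two applications of the lemma are independent. Everything else is immediate bookkeeping with the functor $\mathcal B$.
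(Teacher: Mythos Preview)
Your proof is correct and is exactly the argument the paper has in mind: the proposition is stated there without proof, immediately after Segal's Lemma, as ``the following more general consequence,'' and your write-up supplies precisely that deduction via functoriality of $\mathcal B$ together with two applications of Lemma~\ref{SegalLemma}. Your observation about the direction of the natural transformations being irrelevant (because homotopy is symmetric) is the only subtlety, and you handle it correctly.
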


An important special case is the well-known fact that a Galois connection
between posets yields a homotopy equivalence of order complexes~\cite{bjornersurvey}. The following
corollary is the special case of Rota's cross-cut theorem~\cite{bjornersurvey} that we used earlier (Theorem~\ref{crosscutstated}).

\begin{Cor}[Rota]\label{crosscut}
Let $P$ be a finite poset such that any subset of $P$ with a common lower bound
has a meet.  Define a simplicial complex $K$ with vertex set the set $\mathscr
M(P)$ of maximal elements of $P$ and with simplices those subsets of $\mathscr
M(P)$ with a common lower bound.  Then $K$ is homotopy equivalent to the order
complex $\Delta(P)$.
\end{Cor}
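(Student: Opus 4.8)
The plan is to realize $K$, up to homeomorphism, as the order complex of an auxiliary poset and then connect that poset to $P$ by a Galois connection, so that Proposition~\ref{homotopyequiv} applies directly. Let $Q$ be the set of all nonempty subsets $S\subseteq\mathscr M(P)$ that possess a common lower bound in $P$, partially ordered by reverse inclusion. The nonempty faces of $K$ are exactly such subsets, so $Q$ is the face poset of $K$ with the order reversed; hence $\Delta(Q)=\mathcal BQ$ is the barycentric subdivision of $K$, and in particular is homeomorphic to $K$. (Each singleton $\{m\}$ with $m\in\mathscr M(P)$ belongs to $Q$, so the vertex set of $K$ really is $\mathscr M(P)$; the degenerate case $P=\emptyset$ is trivial.)

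Next I would define two monotone maps between $P$ and $Q$. For $p\in P$ put $M(p)=\{m\in\mathscr M(P)\mid m\geq p\}$. Since $P$ is finite, $p$ lies below some maximal element, so $M(p)\neq\emptyset$, and $p$ is a common lower bound of $M(p)$; thus $\alpha\colon P\to Q$, $\alpha(p)=M(p)$, is well defined, and it is order preserving because $p\leq p'$ forces $M(p)\supseteq M(p')$, which is the order of $Q$. In the other direction, let $\beta\colon Q\to P$ be $\beta(S)=\bigwedge S$, which exists by hypothesis; if $S\leq_Q T$, i.e.\ $S\supseteq T$, then $\bigwedge S\leq\bigwedge T$, so $\beta$ is order preserving.

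Finally I would observe that $(\alpha,\beta)$ is a Galois connection: $\alpha(p)\leq_Q S$ says $M(p)\supseteq S$, i.e.\ every element of $S$ lies above $p$, i.e.\ $p$ is a lower bound of $S$, i.e.\ $p\leq\bigwedge S=\beta(S)$. Equivalently, and this is all that Proposition~\ref{homotopyequiv} requires, there are natural transformations $1_P\Rightarrow\beta\alpha$ (since $p\leq\bigwedge M(p)$, because $p$ bounds $M(p)$ below) and $\alpha\beta\Rightarrow 1_Q$ (since $S\subseteq M(\bigwedge S)$, because each $m\in S$ is maximal and lies above $\bigwedge S$). Proposition~\ref{homotopyequiv} then gives $\Delta(P)=\mathcal BP\simeq\mathcal BQ=\Delta(Q)$, and since $\Delta(Q)$ is homeomorphic to $K$ the corollary follows.

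The argument is essentially routine; the only points to watch are the directions of the orders (the assignment $p\mapsto M(p)$ reverses inclusion, which is why $Q$ is taken with the reverse order) and the standard fact that the order complex of the face poset of a simplicial complex is its barycentric subdivision. If one prefers to avoid the latter, one can instead pass to the face category of $K$ and note that its classifying space is $\mathrm{sd}(K)$, or simply invoke the Galois-connection special case of Proposition~\ref{homotopyequiv} already mentioned in the text applied to $\alpha$ and $\beta$.
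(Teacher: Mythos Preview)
Your proof is correct and is essentially the same as the paper's: both pass to the face poset of $K$, use that its order complex is the barycentric subdivision of $K$, and exhibit the Galois connection $p\mapsto\{m\in\mathscr M(P)\mid m\geq p\}$ with inverse $S\mapsto\bigwedge S$. The only cosmetic difference is that the paper keeps the natural inclusion order on the face poset and calls the resulting connection contravariant, whereas you reverse the order on $Q$ up front so that $\alpha$ and $\beta$ become covariant; since a poset and its opposite have the same order complex, this changes nothing.
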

\begin{proof}
Let $\mathscr F$ be the face poset of $K$.  Then it is well known that
$\Delta(\mathscr F)$ is the barycentric subdivision of $K$ and hence
homeomorphic to it.  Thus it suffices, by Proposition~\ref{homotopyequiv}, to
establish a contravariant Galois connection between $P$ and $\mathscr F$.
Define $F\colon P\to \mathscr F$ by $F(p) = \{m\in \mathscr M(P)\mid m\geq p\}$
and $G\colon \mathscr F\to P$ by $G(X)=\bigwedge X$ for a simplex $X$ of $K$.
Then $X\leq F(p)$ if and only if $p\leq G(X)$ and so $F$ and $G$ form a
contravariant Galois connection.
\end{proof}

Another well-known  corollary is that a category with a terminal object has a
contractible classifying space~\cite[Corollary 3.7]{ktheory}.

\begin{Cor}\label{terminal}
If $\mathscr C$ is a category with a terminal object, then $\mathcal B\mathscr C$ is contractible.
\end{Cor}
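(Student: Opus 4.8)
The plan is to deduce this directly from Segal's Lemma (Lemma~\ref{SegalLemma}) by comparing the identity functor on $\mathscr C$ with a constant functor onto the terminal object.

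Let $t$ be a terminal object of $\mathscr C$ and let $G\colon \mathscr C\to\mathscr C$ be the constant functor sending every object to $t$ and every arrow to $1_t$. For each object $c$ of $\mathscr C$, let $\eta_c\colon c\to t$ be the unique morphism to the terminal object. First I would check that $\eta=(\eta_c)$ is a natural transformation from $1_{\mathscr C}$ to $G$: given an arrow $f\colon c\to c'$, both composites $\eta_{c'}\circ f$ and $1_t\circ\eta_c$ are morphisms $c\to t$, hence equal by terminality of $t$, which is exactly the naturality square. Thus there is a natural transformation $1_{\mathscr C}\Rightarrow G$.

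Applying Segal's Lemma to $F=1_{\mathscr C}$ and $G$ (with the natural transformation just constructed) shows that $\mathcal B1_{\mathscr C}$ and $\mathcal BG$ are homotopic maps $\mathcal B\mathscr C\to\mathcal B\mathscr C$. But $\mathcal B1_{\mathscr C}$ is the identity map of $\mathcal B\mathscr C$, while $\mathcal BG$ is the constant map onto the $0$-cell corresponding to $t$: since $G$ sends every arrow to an identity, $\mathcal BG$ collapses every cell of positive dimension to that $0$-cell. Hence the identity map of $\mathcal B\mathscr C$ is homotopic to a constant map, which is the definition of contractibility.

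There is no real obstacle here; the one point requiring care is the verification that $\eta$ is natural, and this is precisely where the hypothesis that $t$ is terminal is used. Alternatively, one could route the argument through Proposition~\ref{homotopyequiv}: take $\mathscr D$ to be the trivial one-object, one-morphism category, $F\colon\mathscr C\to\mathscr D$ the unique functor, and $G\colon\mathscr D\to\mathscr C$ the functor selecting $t$; then $FG=1_{\mathscr D}$ and the natural transformation $1_{\mathscr C}\Rightarrow GF$ constructed above gives $\mathcal B\mathscr C\simeq\mathcal B\mathscr D$, which is a point.
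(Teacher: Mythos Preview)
Your proof is correct. Your alternative route through Proposition~\ref{homotopyequiv} with $\mathscr D=\{1\}$ is exactly the paper's argument (the paper phrases it as saying that $F\colon\mathscr C\to\{1\}$ and $G\colon\{1\}\to\mathscr C$ form an adjoint pair, which is the same data you write down); your main argument simply composes those two functors first and applies Segal's Lemma directly to $1_{\mathscr C}\Rightarrow GF$, avoiding the detour through the auxiliary category.
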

\begin{proof}
Let $\{1\}$ denote the trivial monoid and let $t$ be the terminal object of
$\mathscr C$.  Then the unique functor $F\colon \mathscr C\to \{1\}$ and the
functor $G\colon \{1\}\to \mathscr C$ sending the unique object of $\{1\}$ to $t$ form
an adjoint pair.
\end{proof}

Let us prove the folklore result that a monoid with a left zero element has a
contractible classifying space. Our proof is easier than the one in~\cite{mcduff}.

\begin{Prop}[Folklore]\label{leftzero}
Let $M$ be a monoid with a left zero element.  Then $\mathcal BM$ is
contractible.
\end{Prop}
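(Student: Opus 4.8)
The plan is to exhibit a natural transformation between the identity functor of $M$ and a constant functor, and then apply Segal's Lemma~\ref{SegalLemma} to conclude that $1_{\mathcal B M}$ is null-homotopic.

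First I would fix a left zero $z\in M$, so that $mz=z$ for all $m\in M$, and view $M$ as a one-object category in the usual way. Let $\{1\}$ be the trivial monoid, $F\colon M\to\{1\}$ the unique functor, and $G\colon\{1\}\to M$ the functor sending the identity of $\{1\}$ to $1_M$. Then $\kappa:=GF\colon M\to M$ is the functor collapsing every morphism of $M$ to $1_M$; being a composite of functors it is genuinely a functor, so Segal's lemma will apply to it, and $\mathcal B\kappa=\mathcal BG\circ\mathcal BF$ factors through $\mathcal B\{1\}$, which is a single point.

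Next I would verify that $z$, regarded as the unique component of a candidate natural transformation $\kappa\Rightarrow 1_M$, is in fact natural: since $M$ has one object, the naturality square for a morphism $m$ of $M$ asserts $1_M(m)\circ z = z\circ\kappa(m)$, that is $mz=z$, which holds precisely because $z$ is a left zero. One point to watch: a left zero produces a natural transformation in the direction $\kappa\Rightarrow 1_M$, whereas a right zero would give one in the direction $1_M\Rightarrow\kappa$; pinning down this variance (and checking the correct naturality square) is the only place requiring any care, and either direction is enough for Segal's lemma.

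Finally, Segal's Lemma~\ref{SegalLemma} yields $\mathcal B\kappa\simeq \mathcal B(1_M)=1_{\mathcal BM}$, while $\mathcal B\kappa$ is a constant map since it factors through the one-point space $\mathcal B\{1\}$. Hence $1_{\mathcal BM}$ is homotopic to a constant map, so $\mathcal BM$ is contractible. Alternatively, the same data can be fed into Proposition~\ref{homotopyequiv} applied to $F$ and $G$ (using $FG=1_{\{1\}}$ and the natural transformation $\kappa=GF\Rightarrow 1_M$) to get $\mathcal BM\simeq\mathcal B\{1\}$ directly. I do not anticipate any genuine obstacle: the argument is entirely formal once the correct variance of the natural transformation is identified.
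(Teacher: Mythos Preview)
Your approach is the same as the paper's: set up the constant functor $\kappa$ (the paper writes it as $\psi\p$, with $\psi\colon\{1\}\to M$ the inclusion), exhibit a natural transformation between $1_M$ and $\kappa$ whose sole component is the zero $z$, and invoke Segal's lemma (equivalently Proposition~\ref{homotopyequiv}).

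There is, however, a slip that makes your naturality check fail as written: you have swapped the definitions of left and right zero. A \emph{left} zero satisfies $zm=z$ for all $m\in M$; the identity $mz=z$ you assume is the defining property of a \emph{right} zero. With the correct hypothesis $zm=z$, the naturality square for $\eta_*=z$ reads $\kappa(m)\circ z = z\circ 1_M(m)$, i.e.\ $1\cdot z = z\cdot m$, so the natural transformation goes in the direction $1_M\Rightarrow\kappa$ (exactly as in the paper's proof), not $\kappa\Rightarrow 1_M$ as you state. Your parenthetical remark about which kind of zero yields which direction is therefore backwards as well. Since you already observe that Segal's lemma is indifferent to the direction of the transformation, the repair is immediate, and once made your argument coincides with the paper's.
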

\begin{proof}
Let $\p\colon M\to \{1\}$ be the trivial homomorphism and $\psi\colon \{1\}\to M$
be the inclusion.  Trivially, $\p\psi=1_{\{1\}}$.  Next we define a natural
transformation $\eta\colon 1_M\Rightarrow \psi\p$. Let $z$ be the left zero.
The component of $\eta$ at the unique object of $M$ is $z$.  Then one has, for
$m\in M$, that $zm=z=\psi\p(m)z$, i.e., $\eta$ is
a natural transformation.  Thus $\mathcal BM$ is contractible.
\end{proof}

\subsection{Quillen's Theorem A}
\label{ss:quillentheoremA}

An important tool for determining whether a functor $F\colon \mathscr C\to
\mathscr D$ induces a homotopy equivalence of classifying spaces is the famous
Quillen's Theorem A~\cite{Quillen,ktheory}.  If $d$ is an object of $\mathscr
D$, then the \emph{left fiber category} $F/d$ has object set all morphisms
$f\colon F(c)\to d$ with $c$ an object of $\mathscr C$.  A morphism
from $f\colon F(c)\to d$ to $f'\colon F(c')\to d$ in
$F/d$ is a morphism $g\colon c\to c'$ such that
\begin{align*}
    \xymatrix{F(c)\ar[rr]^{F(g)}\ar[rd]_{f}&&F(c')\ar[ld]^{f'}\\ & d & }
\end{align*}
commutes.

\begin{Rmk}\label{preordercase}
If $F\colon \mathscr C\to \mathscr D$ is a functor with $\mathscr D$ a poset,
then there is at most one arrow $F(x)\to d$.  Thus $F/d$ can be identified with
the full subcategory of $\mathscr C$ with object set $F\inv(D_{\leq d})$
where $D_{\leq d}$ consists of all objects of $\mathscr D$ less than or
equal to $d$ in the order.
\end{Rmk}

We now state Quillen's Theorem A; see~\cite{Quillen,ktheory} for a proof.

\begin{Thm}[Quillen's Theorem A]
Let $F\colon \mathscr C\to \mathscr D$ be a functor such that the left fiber
categories $F/d$ have  contractible classifying spaces for all objects $d$ of
$\mathscr D$.  Then $F$ induces a homotopy equivalence of classifying spaces.
\end{Thm}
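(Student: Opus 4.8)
The plan is to factor $F$ through an auxiliary category assembled out of the left fibres, and to show that each of the two resulting legs induces a homotopy equivalence of classifying spaces. Let $\mathscr E$ be the Grothendieck construction (category of elements) of the functor $\mathscr D\to\mathbf{Cat}$ sending an object $d$ to the left fibre $F/d$ and an arrow $\gamma\colon d\to d'$ to the post-composition functor $F/d\to F/d'$. Explicitly, an object of $\mathscr E$ is a triple $(c,d,f)$ with $c$ an object of $\mathscr C$, $d$ an object of $\mathscr D$ and $f\colon F(c)\to d$ an arrow of $\mathscr D$, and a morphism $(c,d,f)\to(c',d',f')$ is a pair $(h,\gamma)$ with $h\colon c\to c'$ in $\mathscr C$ and $\gamma\colon d\to d'$ in $\mathscr D$ satisfying $f'\circ F(h)=\gamma\circ f$. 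It carries two projections $p\colon\mathscr E\to\mathscr C$, $p(c,d,f)=c$, and $q\colon\mathscr E\to\mathscr D$, $q(c,d,f)=d$, together with a functor $s\colon\mathscr C\to\mathscr E$ given by $s(c)=(c,F(c),1_{F(c)})$ and $s(h)=(h,F(h))$. One checks immediately that $p\circ s=1_{\mathscr C}$ and $q\circ s=F$; hence $\mathcal BF=\mathcal Bq\circ\mathcal Bs$, and it suffices to show that $\mathcal Bs$ and $\mathcal Bq$ are homotopy equivalences.

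First, $\mathcal Bs$ is a homotopy equivalence. For every object of $\mathscr E$ there is an arrow $\eta_{(c,d,f)}=(1_c,f)\colon sp(c,d,f)=(c,F(c),1_{F(c)})\to(c,d,f)$, and for a morphism $(h,\gamma)$ of $\mathscr E$ the naturality square for $\eta$ commutes precisely because $f'\circ F(h)=\gamma\circ f$. Thus $\eta$ is a natural transformation $sp\Rightarrow 1_{\mathscr E}$, so by Segal's Lemma~\ref{SegalLemma} the map $\mathcal B(sp)=\mathcal Bs\circ\mathcal Bp$ is homotopic to the identity of $\mathcal B\mathscr E$; since also $\mathcal Bp\circ\mathcal Bs=\mathrm{id}_{\mathcal B\mathscr C}$, the maps $\mathcal Bs$ and $\mathcal Bp$ are mutually inverse homotopy equivalences.

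Second — and this is the step that uses the hypothesis — $\mathcal Bq$ is a homotopy equivalence. The functor $q$ is a Grothendieck opfibration whose fibre over an object $d$ is exactly $F/d$: given $(c,d,f)$ and $\gamma\colon d\to d'$, the arrow $(1_c,\gamma)\colon(c,d,f)\to(c,d',\gamma f)$ is a cocartesian lift. By Thomason's homotopy colimit theorem, $\mathcal B\mathscr E$ is homotopy equivalent to the homotopy colimit over $\mathscr D$ of the spaces $\mathcal B(F/d)$, naturally and compatibly with the projections down to $\mathscr D$; and $\mathcal B\mathscr D$ is itself the homotopy colimit over $\mathscr D$ of the constant one-point diagram. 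Since each $\mathcal B(F/d)$ is contractible by hypothesis, the terminal maps $\mathcal B(F/d)\to\{\ast\}$ form an objectwise homotopy equivalence of $\mathscr D$-diagrams of spaces, hence induce a homotopy equivalence on homotopy colimits, and under the identifications above this induced map is $\mathcal Bq$. Combining the two steps, $\mathcal BF=\mathcal Bq\circ\mathcal Bs$ is a composite of homotopy equivalences.

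The crux, and the only non-formal ingredient, is the second step. Everything about $\mathscr E$ and its projections is elementary category theory, and the first step invokes only Segal's Lemma, which is already available. But passing from ``all fibres of the opfibration $q\colon\mathscr E\to\mathscr D$ are contractible'' to ``$\mathcal Bq$ is a homotopy equivalence'' is the genuinely topological part: it rests on the fact that geometric realization of a bisimplicial set is insensitive to one-variable weak equivalences — applied to the bisimplicial set whose $(p,q)$-bisimplices pair a chain $d_0\to\cdots\to d_q$ of $\mathscr D$ with a chain of length $p$ in $F/d_0$, realizing to $\mathcal B\mathscr E$ — equivalently on Thomason's identification of $\mathcal B$ of a Grothendieck construction with a homotopy colimit. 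One could instead deduce this step from Quillen's Theorem B, of which it is the special case of contractible fibres, but that merely relocates the difficulty.
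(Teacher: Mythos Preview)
The paper does not supply its own proof of Quillen's Theorem~A: it simply states the theorem and refers the reader to~\cite{Quillen,ktheory}. So there is nothing in the paper to compare your argument against.

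That said, your sketch is essentially the standard proof, in the form it has settled into since Thomason. The factorisation through the Grothendieck construction $\mathscr E$ of $d\mapsto F/d$, the observation that $s$ is a section of $p$ with $sp\Rightarrow 1_{\mathscr E}$ furnishing the homotopy inverse via Segal's Lemma, and the identification of $\mathcal Bq$ as the map on homotopy colimits induced by collapsing each contractible fibre --- all of this is correct and is exactly how Quillen's bisimplicial argument is usually repackaged. You are right to flag the second step as the genuine content: the statement that levelwise weak equivalences of bisimplicial sets realise to weak equivalences (or equivalently Thomason's $\mathrm{hocolim}$ description of $\mathcal B$ of a Grothendieck construction) is the one ingredient that is not formal and is not available from the tools the paper has set up. If you were writing this for the paper you would need to import that fact with a citation, just as the authors themselves do for the theorem as a whole.
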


\subsection{Semidirect product of a monoid with a set}

The main example of a small category which is not a monoid or a poset that we
shall need is the semidirect product of a monoid with a set.  Let $M$ be a
monoid and let $X$ be a right $M$-set.  The \emph{semidirect product} (also
known as the \emph{category of elements}, or the \emph{Grothendieck
construction}) $X\rtimes M$ has object set $X$ and arrow set $X\times M$.  An
arrow $(x,m)$ has domain $xm$ and range $x$, and we draw it
\begin{align*}
    (x,m)=\xymatrix{xm\ar[r]^m&x}.
\end{align*}
Composition is given by $(x,m)(xm,n) = (x,mn)$, or in pictures
\begin{align*}
    \xymatrix{xmn\ar[r]^n\ar@/_1.5pc/[rr]_{mn}&xm\ar[r]^m& x}.
\end{align*}
The identity at $x$ is $(x,1)$.  The assignment $X\mapsto X\rtimes M$ is a
functor from the category of right $M$-sets to the category of small
categories.

There is an exact pair of adjoint functors between the categories $\module
{\Bbbk M}$ and $\module \Bbbk^{X\rtimes M}$  to which we can apply the Adams-Rieffel
theorem.   Namely, if $V$ is a $\Bbbk M$-module, we can define a left $X\rtimes M$-module $P_V$ on objects by
$P_V(x)=V$ for all $x\in X$.  We define $P_V(x,m)\colon V\to V$ on a morphism $(x,m)$ by
$v\mapsto mv$.  The functor $P\colon \module {\Bbbk M}\to \module \Bbbk^{X\rtimes M}$ given by $P(V)=P_V$ on objects (with the obvious effect on
morphisms) is clearly exact.  It has right adjoint $G$ which sends a left $X \rtimes M$-module
$Q$ to the direct product $\prod_{x\in X}Q(x)$ with action given by
$(mf)(x)=Q(x,m)f(xm)$ where we view an element of the direct product as a mapping
$f\colon X\to \coprod_{x\in X}Q(x)$ with $f(x)\in Q(x)$ for all $x\in X$.
Plainly $G$ is an exact functor.  The isomorphism
\begin{align*}
    \Hom_{\Bbbk(X\rtimes M)}(P(V),Q)\to \Hom_{\Bbbk M}(V,G(Q))
\end{align*}
sends a natural transformation $\eta\colon
P_V\Rightarrow Q$ to the mapping $\prod_{x\in X}\eta_x$ where $\eta_x\colon V\to Q(x)$
is the component of $\eta$ at the object $x$.  Conversely, any homomorphism
$\eta\colon V\to G(Q)$ gives rise to a natural transformation $P_V\Rightarrow Q$ whose
component at $x$ is the composition of $\eta$ with the projection to the factor
$Q(x)$.

\begin{Thm}\label{Eckmannshapirodiscretefibration}
Let $M$ be a monoid and $X$ a right $M$-set.  Then one has a natural
isomorphism
\begin{align*}
    \Ext_{\Bbbk(X\rtimes M)}^\bullet(P(V),Q)\cong \Ext_{\Bbbk M}^{\bullet}(V,G(Q))
\end{align*}
where $P$ and $G$ are the adjoint functors
considered above.  In particular, one has
\begin{align*}
    H^\bullet(X\rtimes M,Q)\cong H^\bullet(M,G(Q)).
\end{align*}
\end{Thm}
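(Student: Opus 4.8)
The plan is to obtain the statement as a direct application of the Adams--Rieffel theorem (Theorem~\ref{AdamsRieffel}) to the exact adjoint pair $(P,G)$ constructed just above. First I would set $\mathscr A=\module{\Bbbk}^{X\rtimes M}$, $\mathscr B=\module{\Bbbk M}$ and $\mathscr C=\module{\Bbbk}$, noting that $\mathscr A$ has enough injectives because $X\rtimes M$ is a small category~\cite{ringoids}. Then take $T=P\colon\mathscr B\to\mathscr A$, $S=G\colon\mathscr A\to\mathscr B$ and $F=\Hom_{\Bbbk M}(V,-)\colon\mathscr B\to\mathscr C$. By construction $G$ is right adjoint to $P$, and both $P$ and $G$ are exact, so all the hypotheses of Theorem~\ref{AdamsRieffel} are satisfied; it produces a natural isomorphism $R^n(F\circ S)\cong(R^nF)\circ S$ for all $n\geq 0$.

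It then remains only to identify the two sides. The adjunction isomorphism
\begin{align*}
    \Hom_{\Bbbk(X\rtimes M)}(P(V),Q)\cong\Hom_{\Bbbk M}(V,G(Q))
\end{align*}
exhibited above is natural in $Q$, so $F\circ S=\Hom_{\Bbbk M}(V,G(-))$ is naturally isomorphic, as a functor $\mathscr A\to\mathscr C$, to $\Hom_{\Bbbk(X\rtimes M)}(P(V),-)$; since naturally isomorphic functors have naturally isomorphic right derived functors, $R^n(F\circ S)(Q)\cong\Ext^n_{\Bbbk(X\rtimes M)}(P(V),Q)$. On the other hand, $F$ is additive and left exact with $R^nF=\Ext^n_{\Bbbk M}(V,-)$, so $(R^nF)\circ S$ evaluated at $Q$ is $\Ext^n_{\Bbbk M}(V,G(Q))$. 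Combining these two identifications yields the first assertion, naturally in $Q$.

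For the final statement, I would specialize to $V=\Bbbk$ with the trivial $\Bbbk M$-action. Then $P(\Bbbk)(x)=\Bbbk$ for every object $x$ of $X\rtimes M$ and every arrow $(x,m)$ acts as the identity, so $P(\Bbbk)=\Delta(\Bbbk)$. Hence, by the definitions of category and monoid cohomology, $H^n(X\rtimes M,Q)=\Ext^n_{\Bbbk(X\rtimes M)}(\Delta(\Bbbk),Q)\cong\Ext^n_{\Bbbk M}(\Bbbk,G(Q))=H^n(M,G(Q))$. The only point that demands any care is checking that the data $(S,T,F)$ genuinely meet the hypotheses of Theorem~\ref{AdamsRieffel}---in particular, the exactness of $P$ and $G$ and the fact that $G$, being right adjoint to the exact functor $P$, sends injectives to injectives, so that derived functors can be transported between $\mathscr A$ and $\mathscr B$ along a single injective resolution---but all of this is precisely what Theorem~\ref{AdamsRieffel} packages, so the argument is formal once the adjoint pair $(P,G)$ has been built, which was done above.
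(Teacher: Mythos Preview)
Your proof is correct and matches the paper's own argument essentially verbatim: both apply Theorem~\ref{AdamsRieffel} with $\mathscr A=\module{\Bbbk}^{X\rtimes M}$, $\mathscr B=\module{\Bbbk M}$, $\mathscr C=\module{\Bbbk}$, $S=G$, $T=P$, $F=\Hom_{\Bbbk M}(V,-)$, identify $F\circ S\cong\Hom_{\Bbbk(X\rtimes M)}(P(V),-)$ via the adjunction, and deduce the cohomology statement from $P(\Bbbk)=\Delta(\Bbbk)$. Your write-up simply spells out a few standard verifications that the paper leaves implicit.
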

\begin{proof}
Apply Theorem~\ref{AdamsRieffel} taking $\mathscr A=\module \Bbbk^{X\rtimes M}$,
$\mathscr B=\module {\Bbbk M}$, $\mathscr C=\module \Bbbk$, $S=G$, $T=P$ and
$F=\Hom_{\Bbbk M}(V,-)$.  One uses that $F\circ S = \Hom_{\Bbbk M}(V,G(-))\cong
\Hom_{\Bbbk(X\rtimes M)}(P(V),-)$.  The final statement follows because
$P(\Bbbk)=\Delta(\Bbbk)$.
\end{proof}

Let $V$ be a $\Bbbk$-module.  Then one computes readily that $G(\Delta(V))=V^X$
with the $\Bbbk M$-module structure given via the left action $mf(x)=f(xm)$.  Thus
we have the following corollary to
Theorem~\ref{Eckmannshapirodiscretefibration}.  It is the dual of a result of
Nunes proved for the case of homology~\cite{Nunes}; see also the appendix of~\cite{Loday}.  Our proof though is more
conceptual.

\begin{Cor}\label{semidirectcohom}
Let $M$ be a monoid and $\Bbbk$ a commutative ring with unit.  Suppose that $X$ is
a right $M$-set and $V$ is a $\Bbbk$-module.  Then
\begin{align*}
    H^n(M,V^X)\cong H^n(X\rtimes M,\Delta(V))\cong H^n(\mathcal B(X\rtimes M),V)
\end{align*}
for $n\geq 0$.
\end{Cor}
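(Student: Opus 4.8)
The plan is to deduce both isomorphisms by specializing the machinery already built in this section to the coefficient module $Q=\Delta(V)$. First I would make explicit the computation of $G(\Delta(V))$ that was asserted just before the statement. By construction $G$ sends a left $X\rtimes M$-module $Q$ to $\prod_{x\in X}Q(x)$ with $M$-action $(mf)(x)=Q(x,m)f(xm)$; applying this to the constant functor $\Delta(V)$, for which every object $x$ has $\Delta(V)(x)=V$ and every arrow $(x,m)$ has $\Delta(V)(x,m)=1_V$, gives $G(\Delta(V))=\prod_{x\in X}V=V^X$ with the left $\Bbbk M$-module structure $(mf)(x)=f(xm)$. This is precisely the module $V^X$ appearing on the left-hand side of the statement, so there is nothing further to reconcile here.

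Next I would invoke Theorem~\ref{Eckmannshapirodiscretefibration} with $Q=\Delta(V)$. Its second conclusion supplies a natural isomorphism $H^n(X\rtimes M,\Delta(V))\cong H^n(M,G(\Delta(V)))$ for all $n\geq 0$, and substituting the identification $G(\Delta(V))=V^X$ from the first step yields the first isomorphism $H^n(M,V^X)\cong H^n(X\rtimes M,\Delta(V))$. For the second isomorphism I would apply the theorem recalled in Section~\ref{ss:classifyingspace} identifying, for any small category $\mathscr C$, the cohomology $H^\bullet(\mathscr C,\Delta(V))$ with the cellular cohomology $H^\bullet(\mathcal B\mathscr C,V)$ of its classifying space; taking $\mathscr C=X\rtimes M$ gives $H^n(X\rtimes M,\Delta(V))\cong H^n(\mathcal B(X\rtimes M),V)$. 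Concatenating the two chains of isomorphisms completes the argument.

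Since every ingredient is already in place, I do not expect a genuine obstacle. The only point requiring a little care is bookkeeping: one must check that the left $\Bbbk M$-module structure produced by $G(\Delta(V))$ agrees on the nose with the action $mf(x)=f(xm)$ written in the statement (rather than its opposite), and that the isomorphism of Theorem~\ref{Eckmannshapirodiscretefibration} is natural in $V$—which it is, being induced by the adjunction isomorphism $\Hom_{\Bbbk(X\rtimes M)}(P(V),-)\cong\Hom_{\Bbbk M}(V,G(-))$ established there. Once this is noted, the corollary follows immediately.
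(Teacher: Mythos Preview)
Your proposal is correct and follows essentially the same approach as the paper: the paper states just before the corollary that $G(\Delta(V))=V^X$ with the indicated action and then presents the result as an immediate corollary to Theorem~\ref{Eckmannshapirodiscretefibration}, with the second isomorphism coming from the general identification $H^\bullet(\mathscr C,\Delta(V))\cong H^\bullet(\mathcal B\mathscr C,V)$. Your write-up simply makes these steps explicit.
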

%
%
%

From now on, we say that a right $M$-set $X$ is \emph{contractible} if $\mathcal B(X\rtimes M)$ is contractible.
Let $X$ be a right $M$-set and denote by $\Omega(X)$ the poset of all cyclic
$M$-subsets $xM$ with $x\in X$ ordered by inclusion.  There is a natural
functor $\Phi_X\colon X\rtimes M\to \Omega(X)$ given by $x\mapsto xM$ on
objects and by sending the arrow $(x,m)\colon xm\to x$ to the unique arrow
$xmM\to xM$.  The following result is a key tool in computing the global
dimension of left regular band algebras.

\begin{Thm}\label{equaltoposet}
Suppose that $X$ is a right $M$-set such that each cyclic $M$-subset $xM$ with
$x\in X$ is contractible.  Then $\Phi_X\colon X\rtimes M\to \Omega(X)$ induces
a homotopy equivalence of classifying spaces.
\end{Thm}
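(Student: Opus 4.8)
The plan is to deduce this directly from Quillen's Theorem A applied to $\Phi_X$. Since $\Omega(X)$ is a poset, I would invoke Remark~\ref{preordercase}: for any object $yM$ of $\Omega(X)$, the left fiber $\Phi_X/yM$ is the full subcategory of $X\rtimes M$ on the object set $\Phi_X^{-1}(\Omega(X)_{\leq yM})=\{x\in X\mid xM\subseteq yM\}$. So the whole proof comes down to understanding this subcategory.

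First I would observe that $\{x\in X\mid xM\subseteq yM\}=yM$. Indeed $x=x\cdot 1\in xM$, so $xM\subseteq yM$ forces $x\in yM$; conversely, if $x=ym$ then $xM=ymM\subseteq yM$. Since $yM$ is a sub-$M$-set of $X$, the full subcategory of $X\rtimes M$ on the object set $yM$ is, by inspection of the definition of the Grothendieck construction, exactly the category $yM\rtimes M$ (same objects, same morphisms $(z',m)\colon z'm\to z'$, same composition). Hence $\Phi_X/yM=yM\rtimes M$ for every object $yM$ of $\Omega(X)$.

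Now I would conclude: by hypothesis each cyclic $M$-subset $yM$ is contractible, which by the definition of contractibility for right $M$-sets means that $\mathcal B(yM\rtimes M)$ is contractible. Therefore every left fiber $\Phi_X/yM$ has contractible classifying space, and Quillen's Theorem A yields that $\mathcal B\Phi_X\colon \mathcal B(X\rtimes M)\to\mathcal B\Omega(X)$ is a homotopy equivalence, as required.

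The proof has no genuinely hard step; the one point that needs care is recognizing that the left fibers of $\Phi_X$ are exactly the cyclic sub-$M$-sets $yM$ themselves (rather than merely subsets of $X$ mapping into the downset of $yM$), which is precisely what the identity $\{x\mid xM\subseteq yM\}=yM$ provides, together with the routine matching of the full subcategory of $X\rtimes M$ on the object set $yM$ with the Grothendieck construction $yM\rtimes M$.
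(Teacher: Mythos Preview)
Your proof is correct and essentially identical to the paper's own argument: both apply Quillen's Theorem~A via Remark~\ref{preordercase}, identify the left fiber $\Phi_X/yM$ with the full subcategory on $\{x\in X\mid xM\subseteq yM\}=yM$, recognize this as $yM\rtimes M$, and invoke the contractibility hypothesis. You have merely spelled out the equality $\{x\mid xM\subseteq yM\}=yM$ and the identification of the full subcategory with $yM\rtimes M$ in slightly more detail than the paper does.
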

\begin{proof}
By Quillen's Theorem A it suffices to show $\mathcal B(\Phi_X/xM)$ is contractible for each
$x\in X$.  But by Remark~\ref{preordercase}, one has that $\Phi_X/xM$ is the
full subcategory of $X\rtimes M$ on those objects $y\in X$ with $yM\subseteq
xM$.  But this subcategory is precisely $xM\rtimes M$, which has contractible classifying space by
assumption.  Thus $\Phi_X$ is a homotopy equivalence.
\end{proof}

The key example of a monoid $M$ and a right $M$-set to which the theorem will be applied is a right ideal $X$ of a von Neumann
regular (or more generally, right $PP$, defined below) monoid $M$.

\begin{Prop}\label{freeMset}
The right $M$-set $M$ is contractible.
\end{Prop}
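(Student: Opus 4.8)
The plan is to recognize that the category $M\rtimes M$ has a terminal object and then to invoke Corollary~\ref{terminal}, which says that any small category with a terminal object has a contractible classifying space. First I would unwind the definition of the Grothendieck construction for the right regular action of $M$ on itself: the objects of $M\rtimes M$ are the elements of $M$, and a morphism with target $x$ is a pair $(x,m)$ with $m\in M$ whose source is $xm$. From this description it is immediate that the identity $1\in M$, viewed as an object, is terminal: a morphism from an arbitrary object $y$ to $1$ must be of the form $(1,m)$ with source $1\cdot m=m$, so $m=y$ is forced and $(1,y)\colon y\to 1$ is the unique such morphism.

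Once the terminal object is identified, Corollary~\ref{terminal} gives directly that $\mathcal B(M\rtimes M)$ is contractible, which by definition says that the right $M$-set $M$ is contractible. If one prefers a more hands-on argument, the same conclusion follows from Segal's Lemma~\ref{SegalLemma}: the natural transformation from the identity functor on $M\rtimes M$ to the constant functor with value $1$, whose component at an object $y$ is the morphism $(1,y)\colon y\to 1$, witnesses a homotopy between the identity map of $\mathcal B(M\rtimes M)$ and the constant map at the $0$-cell $1$; checking naturality is a one-line composition computation in $M\rtimes M$.

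I do not anticipate any real obstacle. The single point that requires care is the variance convention in $X\rtimes M$ — namely that $(x,m)$ runs from $xm$ to $x$, and not the other way — since with the opposite convention one would instead be led to look for an initial object of $M\rtimes M$, which in general does not exist.
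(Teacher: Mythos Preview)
Your proposal is correct and is essentially identical to the paper's own proof: the paper also observes that $1$ is a terminal object of $M\rtimes M$ via the unique arrow $(1,m)\colon m\to 1$ and then invokes Corollary~\ref{terminal}. Your additional remark about the variance convention is apt, and the alternative argument via Segal's Lemma is a fine optional elaboration.
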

\begin{proof}
Notice that $1$ is a terminal object of $M\rtimes M$ since $(1,m)\colon m\to 1$
is the unique arrow from $m$ to $1$. Therefore, $\mathcal B(M\rtimes M)$ is contractible by
Corollary~\ref{terminal}.
\end{proof}

Let $\pv{Set}^{M^{op}}$ be the category of right $M$-sets.  This is a special
case of a presheaf category and so the statements below can be obtained from
the more general statements in this context that one can find in a standard
text in category theory, e.g.,~\cite{Borceux1}. The epimorphisms in
$\pv{Set}^{M^{op}}$ are precisely the surjective maps.  Projective objects in
this category are defined in the usual way.  An $M$-set is
\emph{indecomposable} if it cannot be expressed as a coproduct (equals disjoint
union) of two $M$-sets.  Up to isomorphism the projective
indecomposable $M$-sets are those of the form $eM$ with $e$ an idempotent of
$M$.  These are also the cyclic projective $M$-sets up to isomorphism.

\begin{Prop}\label{projMSet}
A projective indecomposable right $M$-set is contractible.
\end{Prop}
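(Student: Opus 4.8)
The plan is to use the fact recalled just above that, up to isomorphism, the projective indecomposable right $M$-sets are exactly the $M$-sets $eM$ with $e \in E(M)$. Since $X \mapsto X \rtimes M$ is functorial, an isomorphism of right $M$-sets induces an isomorphism of categories, hence a homeomorphism of classifying spaces; so it suffices to prove that $eM$ is contractible, that is, that $\mathcal B(eM \rtimes M)$ is contractible. I will do this by realizing $\mathcal B(eM \rtimes M)$ as a retract of the contractible space $\mathcal B(M \rtimes M)$ of Proposition~\ref{freeMset}.

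First I would write down two functors. Let $\iota \colon eM \rtimes M \to M \rtimes M$ be the inclusion, which is the identity on arrow labels: it sends an object $x \in eM$ to $x$ and an arrow $(x,m)$ to $(x,m)$. Because the source and target conventions and the composition rule $(x,m)(xm,n) = (x,mn)$ are literally the same in the two categories, $\iota$ is a functor. Going the other way, define $\pi \colon M \rtimes M \to eM \rtimes M$ by $\pi(a) = ea$ on objects and by sending an arrow $(a,m) \colon am \to a$ to $(ea, m) \colon eam \to ea$ (note $ea \in eM$ and $e(am) = (ea)m$). One checks that $\pi$ preserves sources, targets and identities, and that it preserves composition since $(ea,m)(eam,n) = (ea, mn) = \pi(a, mn) = \pi\big((a,m)(am,n)\big)$. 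Hence $\pi$ is a functor.

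Next I would observe that $\pi \circ \iota = 1_{eM \rtimes M}$: on an object $x \in eM$ one has $\pi(\iota(x)) = \pi(x) = ex = x$, because $ex = x$ for $x \in eM$, and on an arrow $\pi(\iota(x,m)) = \pi(x,m) = (ex,m) = (x,m)$. Applying the classifying-space functor gives $\mathcal B\pi \circ \mathcal B\iota = 1$, so $\mathcal B(eM \rtimes M)$ is a retract of $\mathcal B(M \rtimes M)$. The latter is contractible by Proposition~\ref{freeMset} (indeed $1$ is a terminal object of $M \rtimes M$), and a retract of a contractible space is contractible; therefore $\mathcal B(eM \rtimes M)$ is contractible, which is what we wanted.

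The only point that is not purely formal is the verification that $\pi$ is a genuine functor — in particular that the assignment $(a,m) \mapsto (ea,m)$ respects the semidirect-product composition — and this is the step I expect to demand the most care, though it is routine. As an alternative one could try to deduce acyclicity of $\mathcal B(eM \rtimes M)$ for all coefficients directly from Corollaries~\ref{semidirectcohom} and~\ref{niceacyclic}, but that would yield only acyclicity, not contractibility, so the retract argument above is the better route.
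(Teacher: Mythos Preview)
Your proof is correct and follows essentially the same approach as the paper: both show that $eM\rtimes M$ (equivalently $P\rtimes M$) is a retract of $M\rtimes M$ and then invoke Proposition~\ref{freeMset}. The only difference is presentational: the paper argues abstractly that the epimorphism $M\to P$ splits because $P$ is projective and then appeals to functoriality of $X\mapsto X\rtimes M$, whereas you write down the explicit retraction $a\mapsto ea$ and the induced functors $\iota,\pi$ by hand.
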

\begin{proof}
Let $P$ be a projective indecomposable right $M$-set.  Then $P$ is cyclic,
being isomorphic to $eM$ for some idempotent $e$, and so there is an
epimorphism $\p\colon M\to P$.  Then $\p$ splits and so $P$ is a retract of
$M$.  Hence, by functoriality, $P\rtimes M$ is a retract of $M\rtimes M$ and so
$\mathcal B(P\rtimes M)$ is a retract of $\mathcal B(M\rtimes M)$.  Since
retracts of contractible spaces are contractible, we are done by
Proposition~\ref{freeMset}.
\end{proof}

\subsection{Application to right $PP$ monoids}
A monoid $M$ is called a \emph{right $PP$ monoid} if each principal right ideal
$mM$ is projective in $\pv{Set}^{M^{op}}$. For example, every band is a right
$PP$ monoid, as is every von Neumann regular monoid.  Right $PP$ monoids were first characterized in~\cite{hereditaryactspaper}.  The following description of
right $PP$ monoids is due to Fountain~\cite{fountain}.  Two elements
$m,n$ of $M$ are said to be \emph{$\mathscr L^*$-equivalent} if $mx=my$ if and only if
$nx=ny$ for all $x,y\in M$.  Note that $m\mathrel{\mathscr L^*} n$ if and only
if there is an isomorphism $mM$ to $nM$ taking $m$ to $n$. A monoid is right $PP$ if and only if each $\mathscr L^*$-class of $M$
contains an idempotent~\cite{fountain,hereditaryactspaper}.

Recall that $\R$ denotes the relation associated to Green's $\R$-preorder on
$M$ (see Section \ref{ss:Rorder}). If $R$ is a right ideal of $M$, then the
poset $R/{\R}$ can be identified with the poset of principal right
ideals of $M$ contained in $R$.

\begin{Cor}\label{rightPPcase}
Let $M$ be a right $PP$ monoid, for example a left regular band, and let $R$ be
a right ideal of $M$.  Then $\mathcal B(R\rtimes M)$ is homotopy equivalent to
$\Delta(R/{\R})$.
\end{Cor}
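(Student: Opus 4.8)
The plan is to apply Theorem~\ref{equaltoposet} to the right $M$-set $X=R$. Once I know that every cyclic $M$-subset $xM$ with $x\in R$ is contractible in the sense defined above (that is, $\mathcal B(xM\rtimes M)$ is contractible), Theorem~\ref{equaltoposet} will immediately yield that $\Phi_R\colon R\rtimes M\to \Omega(R)$ induces a homotopy equivalence of classifying spaces, and it will then only remain to identify $\Omega(R)$ with the poset $R/{\R}$.

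First I would dispose of the contractibility hypothesis. Any cyclic right $M$-set is indecomposable: if $xM=A\sqcup B$ with $A$ and $B$ nonempty $M$-subsets, then $x$ lies in one of them, say $A$, and hence $xM=xM\cdot 1\subseteq A$, forcing $B=\emptyset$. Since $M$ is a right $PP$ monoid, the principal right ideal $xM$ is projective in $\pv{Set}^{M^{op}}$. Thus each $xM$ is a projective indecomposable right $M$-set, and so is contractible by Proposition~\ref{projMSet}. (When $M$ is a left regular band, or more generally any band, this case is automatic, since every band is right $PP$.)

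With the hypotheses of Theorem~\ref{equaltoposet} verified, I obtain a homotopy equivalence $\mathcal B(R\rtimes M)\simeq \mathcal B\Omega(R)$. To finish, I would identify $\Omega(R)$: because $R$ is a right ideal of the monoid $M$, for every $x\in R$ one has $x\in xM\subseteq R$, so the cyclic $M$-subsets $xM$ with $x\in R$ are exactly the principal right ideals of $M$ contained in $R$, ordered by inclusion; as recalled just before the statement of the corollary, this poset is precisely $R/{\R}$. Since the classifying space of a poset is its order complex, $\mathcal B\Omega(R)=\Delta(R/{\R})$, completing the argument.

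The only step requiring a little care is the indecomposability of $xM$, as that is what lets us invoke Proposition~\ref{projMSet} rather than merely projectivity of $xM$; everything else is a direct application of the machinery already in place (Theorem~\ref{equaltoposet}, Proposition~\ref{projMSet}, and the identification of $R/{\R}$ with the poset of principal right ideals contained in $R$).
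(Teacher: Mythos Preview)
Your proof is correct and follows essentially the same route as the paper: apply Theorem~\ref{equaltoposet} to $X=R$, using that each $xM$ is projective (by right $PP$) and indecomposable, hence contractible by Proposition~\ref{projMSet}, and then identify $\Omega(R)$ with $R/{\R}$. The only difference is that you spell out the indecomposability of cyclic $M$-sets explicitly, whereas the paper relies on its earlier remark that the cyclic projective $M$-sets coincide (up to isomorphism) with the projective indecomposable ones.
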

\begin{proof}
In this case, $\Omega(R)=R/{\R}$.  Since $M$ is right $PP$, each
principal ideal $mM$ is projective and hence contractible by
Proposition~\ref{projMSet}.  Theorem~\ref{equaltoposet} now provides the
desired result.
\end{proof}

We continue to denote by $\Bbbk$ a commutative ring with unit. The following
results are related to results of Nunes~\cite{Nunes}, proved using spectral
sequences, in the context of monoid homology.  Let $M$ be a monoid and $e\in
E(M)$ an idempotent.   Suppose that $\emptyset\neq R\subsetneq eM$ is a right
ideal.  Let $V$ be a $\Bbbk$-module.  The exact sequence of right $\Bbbk M$-modules
\begin{align*}
    \xymatrix{0\ar[r]& \Bbbk R\ar[r] & \Bbbk eM\ar[r] & \Bbbk eM/\Bbbk R\ar[r]& 0}
\end{align*}
gives rise to an exact sequence of left $\Bbbk M$-modules
\begin{equation}\label{alongexact}
\xymatrix{0\ar[r]& W\ar[r] & V^{eM}\ar[r]& V^R\ar[r] & 0}\end{equation}
where $W=\{f\in V^{eM}\mid f(R)=0\}$ and the map $V^{eM}\to V^R$ is given by
restriction.

\begin{Thm}\label{dimensionshift}
Let $M$ be a monoid and $e\in E(M)$.  Let $\emptyset\neq R\subsetneq eM$ be a
right ideal and $V$ a $\Bbbk$-module.  Let $W=\{f\in V^{eM}\mid f(R)=0\}$.   Then
\begin{align*}
    H^{n+1}(M,W)\cong\til H^n(\mathcal B(R\rtimes M),V)
\end{align*}
for all $n\geq 0$. Moreover, if $M$ is a right $PP$ monoid then
\begin{align*}
    H^{n+1}(M,W)\cong \til H^n(\Delta(R/{\R}),V)
\end{align*}
where $R/{\R}$ is the poset of
principal right ideals contained in $R$.
\end{Thm}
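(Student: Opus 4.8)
The plan is to compute $H^\bullet(M,W)$ by feeding the short exact sequence of left $\Bbbk M$-modules \eqref{alongexact} into the long exact cohomology sequence for the functor $H^\bullet(M,-)=\Ext^\bullet_{\Bbbk M}(\Bbbk,-)$, exploiting the fact that $V^{eM}$ is cohomologically trivial.

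First I would recall from Corollary~\ref{niceacyclic} that $H^0(M,V^{eM})\cong V$ and $H^i(M,V^{eM})=0$ for all $i\ge 1$. Applying $H^\bullet(M,-)$ to $0\to W\to V^{eM}\to V^R\to 0$ then immediately gives, for every $n\ge 1$, an isomorphism $H^{n+1}(M,W)\cong H^n(M,V^R)$ arising from the exact piece $0=H^n(M,V^{eM})\to H^n(M,V^R)\to H^{n+1}(M,W)\to H^{n+1}(M,V^{eM})=0$, together with a bottom exact sequence $0\to H^0(M,W)\to H^0(M,V^{eM})\to H^0(M,V^R)\to H^1(M,W)\to 0$. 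By Corollary~\ref{semidirectcohom}, $H^n(M,V^R)\cong H^n(\mathcal B(R\rtimes M),V)$ naturally, and since $R\ne\emptyset$ this is the cohomology of a nonempty space; because reduced and unreduced cohomology agree in positive degrees, this already proves $H^{n+1}(M,W)\cong\til H^n(\mathcal B(R\rtimes M),V)$ for all $n\ge 1$. This part is purely formal.

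The delicate point is degree zero: I must identify the map $V\cong H^0(M,V^{eM})\to H^0(M,V^R)$ appearing above in order to recognize its cokernel, which is $H^1(M,W)$, as $\til H^0(\mathcal B(R\rtimes M),V)$. The key observation is that this map is induced by the restriction homomorphism $V^{eM}\to V^R$, hence — by naturality of the isomorphisms of Corollary~\ref{semidirectcohom} — corresponds to the map $H^0(\mathcal B(eM\rtimes M),V)\to H^0(\mathcal B(R\rtimes M),V)$ induced by the inclusion of right $M$-sets $R\hookrightarrow eM$ and the resulting functor $R\rtimes M\hookrightarrow eM\rtimes M$. Now $eM$ is a projective indecomposable right $M$-set, so $\mathcal B(eM\rtimes M)$ is contractible by Proposition~\ref{projMSet}; thus the composite $\mathcal B(R\rtimes M)\to\mathcal B(eM\rtimes M)\to\{\mathrm{pt}\}$ models the structure map of $\mathcal B(R\rtimes M)$ up to homotopy, and consequently the map $V\to H^0(\mathcal B(R\rtimes M),V)$ is exactly pullback of constants. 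Its cokernel is, by definition, $\til H^0(\mathcal B(R\rtimes M),V)$. Hence $H^{n+1}(M,W)\cong\til H^n(\mathcal B(R\rtimes M),V)$ for every $n\ge 0$, which is the first assertion.

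For the final assertion, when $M$ is a right $PP$ monoid Corollary~\ref{rightPPcase} supplies a homotopy equivalence $\mathcal B(R\rtimes M)\simeq\Delta(R/{\R})$, where $R/{\R}$ is the poset of principal right ideals of $M$ contained in $R$. Since reduced cohomology is a homotopy invariant, $\til H^n(\mathcal B(R\rtimes M),V)\cong\til H^n(\Delta(R/{\R}),V)$, and combining this with the first assertion finishes the proof. The only genuine obstacle in the argument is the degree-zero bookkeeping in the previous paragraph; everything else follows mechanically from the long exact sequence and the inputs of Corollaries~\ref{niceacyclic}, \ref{semidirectcohom} and \ref{rightPPcase}.
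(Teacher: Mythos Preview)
Your proof is correct and follows essentially the same route as the paper: run the long exact sequence of $H^\bullet(M,-)$ on \eqref{alongexact}, use Corollary~\ref{niceacyclic} to kill the middle term in positive degrees, invoke Corollary~\ref{semidirectcohom}, and then handle the degree-zero connecting map by identifying it with pullback of constants via the contractibility of $\mathcal B(eM\rtimes M)$. The paper's treatment of the bottom of the sequence is phrased slightly more concretely (it identifies $H^0(M,V^X)$ directly with functions constant on components and also checks $H^0(M,W)=0$ by hand), but your naturality argument achieves the same identification.
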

\begin{proof}
Corollary~\ref{niceacyclic} implies that $H^n(M,V^{eM})=0$ for $n\geq 1$.  Thus
the long exact sequence for cohomology applied to \eqref{alongexact} yields a
short exact sequence
\begin{align*}
    \xymatrix{0\ar[r]&H^n(M,V^R)\ar[r]&H^{n+1}(M,W)\ar[r]& 0}
\end{align*}
for each $n\geq 1$.  In light of Corollary~\ref{semidirectcohom}, this proves
the first statement of the theorem for $n\geq 1$.  Let us examine the initial
terms of the long exact sequence.  Note that $H^0(M,W)=0$ because if $f\in W$
is fixed by $M$ and $r\in R, m\in M$, then $f(em)=mf(e)=mrf(e)=f(erm)=0$. Also
$H^1(M,V^{eM})=0$ so we have a short exact sequence
\begin{align*}
    \xymatrix{0\ar[r]&H^0(M,V^{eM})\ar[r]^{\pi_*} & H^0(M,V^R)\ar[r]&H^1(M,W)\ar[r]& 0} .
\end{align*}
If $X$ is a right $M$-set, the isomorphism $H^0(M,V^X)\cong H^0(X\rtimes
M,\Delta(V))\cong H^0(\mathcal B(X\rtimes M),V)$ in
Corollary~\ref{semidirectcohom} allows us to identify $H^0(M,V^X)$ with those
functions $f\colon X\to V$ which are constant on connected components of
$\mathcal B(X\rtimes M)$.  Since $\mathcal B(eM\rtimes M)$ is contractible,
hence connected, $H^0(M,V^{eM})$ consists of the constant functions $eM\to V$
and so the image of $\pi_*$ is the set of functions which are constant on $R$.
Thus $\mathop{\mathrm{coker}} \pi_*=\til H^0(\mathcal B(R\rtimes M),V)$.  This
completes the proof of the first statement.  The final statement follows from Corollary~\ref{rightPPcase}.
\end{proof}

Suppose now that $B$ is a finite left regular band and let $\wh 0\neq Y\in \Lambda(B)$.  Suppose that $Y=Be$. Then $R=eB\setminus \{e\}$ is a right ideal
and
\begin{align*}
    \Bbbk_Y\cong \{f\in \Bbbk^{eB}\mid f(R)=0\}
\end{align*}
via the map $f\mapsto f(e)$. Applying Theorem~\ref{dimensionshift} we obtain
the following corollary.

\begin{Cor}\label{leftregularbandcomplex}
Let $B$ be a left regular band and let $\wh 0\neq Y\in \Lambda(B)$.  Then
$H^0(B,\Bbbk_Y)=0$ and
\begin{align*}
    H^{n+1}(B,\Bbbk_Y)\cong \til H^n(\Delta(B[\wh 0,Y)),\Bbbk)
\end{align*}
for all $n\geq 0$ where $\Delta(B[\wh 0,Y))$ is the order complex of $B[\wh
0,Y)$.
\end{Cor}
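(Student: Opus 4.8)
The plan is to recognize $\Bbbk_Y$ as a special instance of the module $W$ appearing in Theorem~\ref{dimensionshift}, applied to the band $M=B$ itself, and then simply read off the conclusion, since $B$ is a right $PP$ monoid. Since $Y\in\Lambda(B)$ is a principal left ideal, the first step is to fix an idempotent $e\in B$ with $Be=Y$, equivalently $\sigma(e)=Y$. The local submonoid $eB$ is isomorphic to $B[\wh 0,Y]$, and since $Y\neq\wh 0$ this has more than one element; hence $R:=eB\setminus\{e\}$ is a nonempty right ideal properly contained in $eB$. That $R$ is a right ideal follows from left regularity: if $a\in R$ and $ab=e$ for some $b$, then $ea=aba=ab=e$, while $ea=a$ because $a\in eB$, forcing $a=e$, a contradiction. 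Under the isomorphism $eB\cong B[\wh 0,Y]$ the element $e$ corresponds to the identity, so $R$ corresponds to $B[\wh 0,Y)$, and the $\R$-order on $R$ (restricted from $B$) matches the $\R$-order on $B[\wh 0,Y)$.

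Next I would verify that $\Bbbk_Y\cong W=\{f\in\Bbbk^{eB}\mid f(R)=0\}$ as left $\Bbbk B$-modules. Such an $f$ is determined by the single value $f(e)\in\Bbbk$, so $f\mapsto f(e)$ is a $\Bbbk$-linear isomorphism $W\to\Bbbk$. For the module structure, recall from Corollary~\ref{niceacyclic} that the $B$-action on $\Bbbk^{eB}$ is $(b\cdot f)(x)=f(xb)$, so $(b\cdot f)(e)=f(eb)$. By Proposition~\ref{DefLRB2}, $eb=e$ precisely when $\sigma(b)\geq\sigma(e)=Y$; otherwise $eb\in eB\setminus\{e\}=R$, whence $f(eb)=0$. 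Thus on $W\cong\Bbbk$ the element $b$ acts as the identity when $\sigma(b)\geq Y$ and as $0$ otherwise, which is exactly the action defining $\Bbbk_Y$.

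With this identification in hand, Theorem~\ref{dimensionshift} (in its right $PP$ form, valid since $B$ is a band) immediately yields $H^{n+1}(B,\Bbbk_Y)\cong H^{n+1}(B,W)\cong\til H^n(\Delta(R/{\R}),\Bbbk)$ for all $n\geq 0$. Because $B$ is a band, $\leq_\R$ is a partial order and every $\R$-class is a singleton, so the poset $R/{\R}$ of principal right ideals contained in $R$ is isomorphic to $(R,\leq_\R)$, hence to $B[\wh 0,Y)$; therefore $\Delta(R/{\R})\cong\Delta(B[\wh 0,Y))$ and the stated formula follows. For the vanishing $H^0(B,\Bbbk_Y)=0$, I would cite the proof of Theorem~\ref{dimensionshift}, where $H^0(M,W)=0$ is established; alternatively it is immediate, since $Y\neq\wh 0$ guarantees an element $b\in B$ with $\sigma(b)=\wh 0\ngeq Y$, and this $b$ annihilates $\Bbbk_Y$, so any $B$-invariant element of $\Bbbk_Y$ must vanish. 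The homological heavy lifting is entirely contained in Theorem~\ref{dimensionshift}, so there is no real conceptual obstacle here; the only point requiring genuine care is the bookkeeping — keeping straight the local/interval submonoid identifications and matching the $\R$-order conventions when passing between $R/{\R}$ and $B[\wh 0,Y)$.
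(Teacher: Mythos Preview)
Your proof is correct and follows exactly the approach of the paper: choose $e$ with $\sigma(e)=Y$, set $R=eB\setminus\{e\}$, identify $\Bbbk_Y$ with the module $W$ of Theorem~\ref{dimensionshift} via $f\mapsto f(e)$, and invoke that theorem in its right $PP$ form. You have simply spelled out in greater detail the verifications (that $R$ is a nonempty proper right ideal, that the $\Bbbk B$-actions match, and that $R/{\R}\cong B[\wh 0,Y)$ as posets) which the paper leaves implicit in the paragraph preceding the corollary. One tiny wording quibble: it is because $B$ is a \emph{left regular} band, not merely a band, that $\R$-classes are singletons; but since that is the standing hypothesis, this is harmless.
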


\subsection{The proof of Theorem~\ref{mainresult}}
At this point, we can give the proof of our main result, Theorem~\ref{mainresult}.  Let $B$ be a finite left regular band. Suppose that $X<Y$ in $\Lambda(B)$ and let
$\Delta(X,Y)$ be the order complex of $B[X,Y)$.

\newtheorem*{MainResult}{Theorem~\ref{mainresult}}
\begin{MainResult}
Let $B$ be a finite left regular band and let $\Bbbk$ be a commutative ring with
unit.  Let $X,Y\in \Lambda(B)$.  Then
\begin{align*}
    \Ext^n_{\Bbbk B}(\Bbbk_X,\Bbbk_Y) =
    \begin{cases}
        \til H^{n-1}(\Delta(X,Y),\Bbbk) & \text{if}\ X<Y,\ n\geq 1\\
        \Bbbk&\text{if}\ X=Y,\ n=0\\
        0 & \text{else.}
    \end{cases}
\end{align*}
\end{MainResult}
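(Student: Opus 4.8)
The plan is to split the statement into its three cases and reduce each to results already in place. The case $Y\ngeq X$ is exactly Proposition~\ref{noextincomparablecase}, so from now on assume $Y\geq X$. By Proposition~\ref{cohomologicalinterp} there is a natural isomorphism $\Ext^n_{\Bbbk B}(\Bbbk_X,\Bbbk_Y)\cong H^n(B_{\geq X},\Bbbk_Y)$ for all $n\geq 0$, where $\Bbbk_Y$ is regarded as a module over the finite left regular band $C:=B_{\geq X}$, whose support lattice $\Lambda(C)$ is the interval $[X,\wh 1]$ with bottom element $X$. Thus everything comes down to computing the monoid cohomology $H^\bullet(C,\Bbbk_Y)$ for $Y\in\Lambda(C)$, and there are two sub-cases according to whether $Y$ is the bottom element $X$ of $\Lambda(C)$ or lies strictly above it.

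For the diagonal case $X=Y$, the coefficient module is the trivial $\Bbbk C$-module $\Bbbk$, since $\sigma(c)\geq X$ for every $c\in C$ and so every element of $C$ acts as the identity on $\Bbbk_X$. I would then note that $C$ has a left zero element: choosing any $z\in C$ with $\sigma(z)=X$, Proposition~\ref{DefLRB2} gives $zc=z$ for all $c\in C$ because $\sigma(c)\geq X=\sigma(z)$. Consequently $\mathcal{B}C$ is contractible by Proposition~\ref{leftzero}, so $H^n(C,\Bbbk)=H^n(\mathcal{B}C,\Bbbk)$ equals $\Bbbk$ when $n=0$ and vanishes when $n\geq 1$; this matches the value of $\Ext^n_{\Bbbk B}(\Bbbk_X,\Bbbk_X)$ asserted by the theorem.

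For the main case $X<Y$, the element $Y$ is distinct from the bottom $\wh 0=X$ of $\Lambda(C)$, so Corollary~\ref{leftregularbandcomplex}, applied to the left regular band $C$ and the element $Y\in\Lambda(C)$, provides $H^0(C,\Bbbk_Y)=0$ and isomorphisms $H^{n+1}(C,\Bbbk_Y)\cong\til H^n(\Delta(C[X,Y)),\Bbbk)$ for all $n\geq 0$. It then remains to identify the interval subsemigroup: unwinding the definitions of Section~\ref{ss:intervalsubmonoids} gives $C[X,Y)=B_{\geq X}[X,Y)=B[X,Y)$, hence $\Delta(C[X,Y))=\Delta(X,Y)$. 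Reindexing by $m=n+1$ and combining with the isomorphism from the first paragraph yields $\Ext^0_{\Bbbk B}(\Bbbk_X,\Bbbk_Y)=0$ and $\Ext^m_{\Bbbk B}(\Bbbk_X,\Bbbk_Y)\cong\til H^{m-1}(\Delta(X,Y),\Bbbk)$ for $m\geq 1$, completing the last case.

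I expect no serious obstacle at this stage: the genuinely hard work — the dimension-shifting exact sequence of Theorem~\ref{dimensionshift}, the application of Quillen's Theorem~A in Theorem~\ref{equaltoposet}, and the identification of semidirect-product cohomology with the cohomology of order complexes — is already behind us and culminates in Corollary~\ref{leftregularbandcomplex}, so the theorem is essentially an assembly of those ingredients. The one point requiring genuine care is the bookkeeping around interval submonoids: one must check that the module $\Bbbk_Y$, originally defined for $B$ via $\rho_Y$, restricts to the module called $\Bbbk_Y$ for $C=B_{\geq X}$ in Corollary~\ref{leftregularbandcomplex}, and that $C[X,Y)$ really does coincide with $B[X,Y)$ so that the two order complexes agree. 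These are routine unwindings of the definitions in Sections~\ref{ss:intervalsubmonoids} and~\ref{ss:mainresult}, but they are where all the index and module identifications must be nailed down.
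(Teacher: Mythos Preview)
Your proposal is correct and follows essentially the same approach as the paper's own proof: the same three-way case split, the same reductions via Proposition~\ref{noextincomparablecase} and Proposition~\ref{cohomologicalinterp}, the same use of Proposition~\ref{leftzero} for the diagonal case, and the same appeal to Corollary~\ref{leftregularbandcomplex} for $X<Y$. The only difference is that you spell out explicitly the identification $C[\wh 0_C,Y)=B[X,Y)$ and the module-compatibility check, which the paper leaves implicit.
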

\begin{proof}
If $Y\ngeq X$, the result is Proposition~\ref{noextincomparablecase}.  For
$X\leq Y$, we have $\Ext^n_{\Bbbk B}(\Bbbk_X,\Bbbk_Y) = H^n(B_{\geq X},\Bbbk_Y)$ by
Proposition~\ref{cohomologicalinterp}.  Next suppose that $X=Y$. Then
$H^n(B_{\geq X},\Bbbk_X) = H^n(\mathcal BB_{\geq X},\Bbbk)$.
Any element of $B_{\geq X}$ with support $X$ is a left zero and hence $\mathcal BB_{\geq X}$ is contractible by Proposition~\ref{leftzero}.  Thus we are left
with the case $X<Y$. But this is handled by
Corollary~\ref{leftregularbandcomplex}.
\end{proof}


\def\malce{\mathbin{\hbox{$\bigcirc$\rlap{\kern-7.75pt\raise0,50pt\hbox{${\tt
  m}$}}}}}\def\cprime{$'$} \def\cprime{$'$} \def\cprime{$'$} \def\cprime{$'$}
  \def\cprime{$'$} \def\cprime{$'$} \def\cprime{$'$} \def\cprime{$'$}
  \def\cprime{$'$}

\end{document}